\newcommand{\lv}{\left \lvert}
\newcommand{\rv}{\right \rvert}
\newcommand{\simiid}{\overset{\iid}{\sim}}
\newcommand{\holder}{H\"older\xspace}
\newcommand{\unorm}{\|\cdot\|_{\infty}}
\pgfplotsset{compat=1.17}
\title{Optimal Anytime-Valid Tests for Composite Nulls}
\author{Shubhanshu Shekhar \\ EECS Department \\ University of Michigan, Ann Arbor \\ \url{shubhan@umich.edu}}
\date{}
\newtheorem{theorem}{Theorem}
\newtheorem*{namedtheorem}{\theoremname}
\newcommand{\theoremname}{testing}
\newtheorem{lemma}{Lemma}
\newtheorem{proposition}{Proposition}
\newtheorem{fact}{Fact}
\newtheorem{assumption}{Assumption}
\theoremstyle{definition}
\newtheorem{definition}{Definition}
\newtheorem{remark}{Remark}
\newcommand{\R}{\mathbbm R}
\newcommand{\calB}{\mathcal{B}}
\newcommand{\calC}{\mathcal{C}}
\newcommand{\calF}{\mathcal{F}}
\newcommand{\calH}{\mathcal{H}}
\newcommand{\calJ}{\mathcal{J}}
\newcommand{\calM}{\mathcal{M}}
\newcommand{\calN}{\mathcal{N}}
\newcommand{\calO}{\mathcal{O}}
\newcommand{\calP}{\mathcal{P}}
\newcommand{\calX}{\mathcal{X}}
\newcommand{\calY}{\mathcal{Y}}
\newcommand{\ignore}[1]{}
\newcommand{\EE}{\mathbb{E}}
\newcommand{\PP}{\mathbb{P}}
\DeclareMathOperator*{\argmin}{argmin}
\DeclareMathOperator*{\argmax}{argmax}
\crefname{ineq}{inequality}{inequalities}
\crefname{ineq}{inequality}{inequalities}
\crefname{assumption}{Assumption}{Assumptions}
\crefname{proposition}{Proposition}{Propositions}
\crefname{lemma}{Lemma}{Lemmas}
\Crefname{fact}{Fact}{Facts}
\newcommand{\iid}{\text{i.i.d.}\xspace}
\newcommand{\lb}{\left[}
\newcommand{\rb}{\right]}
\newcommand{\lp}{\left(}
\newcommand{\rp}{\right)}
\newcommand{\lbr}{\left\{}
\newcommand{\rbr}{\right\}}
\newcommand{\dkl}{d_{\text{KL}}}
\newcommand{\thetahat}{\widehat{\theta}}
\newcommand{\Phat}{\widehat{P}}
\newcommand{\qtext}[1]{\quad \text{#1} \quad}
\newcommand{\phat}{\widehat{p}}
\newcommand{\Reg}{\mathrm{Regret}}
\newcommand{\UIevalue}{W^{\mathrm{UI}}}
\newcommand{\Wmix}{W^{\mathrm{mix}}}
\newcommand{\Vbar}{\overline{V}}
\newcommand{\process}[1]{\langle #1 \rangle}
\newcommand{\Wtilde}{\widetilde{W}}
\newcommand{\Etilde}{\widetilde{E}}
\begin{document}

\maketitle

\begin{abstract}

We consider the  problem of designing optimal level-$\alpha$ power-one tests for composite nulls. Given a parameter $\alpha \in (0,1)$ and a stream of $\mathcal{X}$-valued observations $\{X_n: n \geq 1\} \simiid P$, the goal is to design a level-$\alpha$ power-one test $\tau_\alpha$ for the null $H_0: P \in \mathcal{P}_0 \subset \mathcal{P}(\mathcal{X})$. Prior works have shown that any such $\tau_\alpha$ must satisfy $\mathbb{E}_P[\tau_\alpha] \geq \tfrac{\log(1/\alpha)}{\gamma^*(P, \mathcal{P}_0)}$, where $\gamma^*(P, \calP_0)$ is the so-called $\mathrm{KL}_{\inf}$ or minimum divergence of $P$  to the null class. In this paper, our objective is to develop and analyze constructive schemes that match this lower bound as $\alpha \downarrow 0$. 

We first consider the finite-alphabet case~($|\calX| = m < \infty$), and show that a test based on \emph{universal} $e$-process~(formed by the ratio of a universal predictor and the running null MLE) is optimal in the above sense. The proof relies on a Donsker-Varadhan~(DV) based saddle-point representation of $\mathrm{KL}_{\inf}$, and an application of Sion's minimax theorem. This characterization motivates a general method for arbitrary $\mathcal{X}$: construct an $e$-process based on the empirical solutions to the saddle-point representation over a sufficiently rich class of test functions. We give sufficient conditions for the optimality of this test for compact convex nulls, and verify them for H\"older smooth density models. We end the paper with a discussion on the computational aspects of implementing our proposed tests in some practical settings. 
\end{abstract}

\tableofcontents
\section{Introduction}
\label{sec:itroduction}
Let $(\calX, \calB)$ denote some measurable space, and $\calP_j = \{P_\theta: \theta \in \Theta_j\}$ for $j=0, 1$, denote a pair of disjoint  classes of probability distributions on $(\calX, \calB)$.  Then, given a stream of observations $\{X_n: n\geq 1\} \overset{\iid}{\sim} P$ for an unknown distribution $P \in \calP_0 \cup \calP_1 \subset \calP(\calX)$, we consider the task of  distinguishing between 
\begin{align}
    H_0: P \in \calP_0, \qtext{versus} H_1: P \in \calP_1. \label{eq:testing-problem-def-general}
\end{align}
Given a parameter $\alpha \in (0, 1)$, our goal is to construct a level-$\alpha$ power-one test $\tau_\alpha$ for this problem. Formally, this involves constructing a stopping time $\tau_\alpha$, such that the following two conditions hold: 
\begin{align}
    \sup_{P_0 \in \calP_0} \PP_{P_0}\lp \tau_\alpha < \infty \rp \leq \alpha, \qtext{and}\inf_{P_1 \in \calP_1} \PP_{P_1}\lp \tau_\alpha < \infty \rp  = 1. 
\end{align}
In other words, $\tau_\alpha$ denotes the random data-driven time at which we stop collecting more observations, and reject the null. In addition to the power-one property, it is often important to control the expected value of the stopping time for any $P \in \calP_1$. Prior works have established a fundamental impossibility result on the value that this quantity can take in the limit of small $\alpha$. In particular, it is known that any level-$\alpha$ test $\tau_\alpha$ must satisfy 
\begin{align}
    \liminf_{\alpha \downarrow 0} \frac{\EE_P[\tau_\alpha]}{\log(1/\alpha)/\gamma^*(P, \calP_0)} \geq 1, \qtext{where} \gamma^*(P, \calP_0) = \inf_{P_0 \in \calP_0} \dkl(P \parallel P_0), 
\end{align}
and $\dkl(P \parallel P_0)$ denotes the relative entropy (or KL divergence) between $P$ and $P_0$. The term $\gamma^*(P, \calP_0)$ is the appropriate measure of difficulty of distinguishing $P$ from the null class $\calP_0$. In this paper, our main objective is to develop a  general constructive scheme to design tests that match this fundamental limit. 

The rest of the paper is organized as follows. We formally describe the problem and present some key definitions in~\Cref{subsec:preliminaries}. We discuss the prior work in~\Cref{subsec:related-work} and also present an overview of our main results. In~\Cref{sec:finite-alphabet}, we analyze the performance of a test based on universal $e$-processes and establish its optimality for finite alphabets. Next, we build upon the insights developed from the finite alphabet case to develop a class of optimal tests for arbitrary weakly compact nulls over general alphabets  in~\Cref{sec:general}. Our construction assumes the ability to compute certain functionals, and in~\Cref{subsec:computational-aspects} we discuss situations in which these quantities can be approximated sufficiently accurately in a computationally feasible manner, and also discuss principled heuristics for more general problems. We end the paper with a short discussion of interesting future directions in~\Cref{sec:conclusion}. Lengthy technical proofs and additional background results are deferred to the appendix.

\subsection{Preliminaries}
\label{subsec:preliminaries}
Throughout this paper, we will work in an unspecified probability space that is large enough to allow the definition of all random variables that arise in our setup. As mentioned before, we assume that we have a stream of \iid observations $\{X_n: n \geq 1\}$ drawn from some unknown distribution $P$. Let $\{\calM_n: n \geq 0\}$ denote a filtration such that $\sigma(X_1, \ldots, X_n) \subset \calM_n$ for all $n \geq 1$. This enlargement of the filtration is to incorporate any additional randomness in the design of the test.  We consider a class of probability distributions $\{P_\theta: \theta \in \Theta\}$ on $\calX$, indexed by some parameter set $\Theta$. Note that we do not require $\Theta$ to be finite dimensional, and in the  most general case, elements of $\Theta$  can simply be the probability measures themselves. We define the null and alternative classes $\calP_0$ and $\calP_1$ as distributions indexed by two disjoint subsets $\Theta_0 \subset \Theta$ and $\Theta_1 \subset \Theta$ respectively. Thus, assuming that $P \equiv P_\theta$ for an unknown $\theta \in \Theta_0 \cup \Theta_1$, we can restate the hypothesis testing problem as 
\begin{align}
    \text{Given } \{X_n: n \geq 1\} \overset{\iid}{\sim} P \equiv P_\theta, \quad \text{decide between} \quad H_0: \theta \in \Theta_0, \qtext{vs.} H_1: \theta \in \Theta_1. \label{eq:parametrized-testing-problem-def}
\end{align}
Depending on the specific problem instance, working with this parametrized formulation may be more natural than the equivalent setup of~\eqref{eq:testing-problem-def-general}. 

Our goal is to design a level-$\alpha$ power one test for this problem for a pre-specified value $\alpha \in (0,1)$. Formally, this means that we need to design a stopping time $\tau_\alpha$, such that 
\begin{align}
    \sup_{\theta_0 \in \Theta_0} \PP_{\theta_0}\lp \tau_\alpha < \infty \rp \leq \alpha \qtext{and} \inf_{\theta_1 \in \Theta_1} \PP_{\theta_1}\lp \tau_\alpha < \infty \rp =1. 
\end{align}
In addition, for any $\theta \not \in \Theta_0$, we also want to ensure a small $\EE_{\theta}[\tau_\alpha]$. Prior work, such as~\citet{agrawal2025stopping}, have established an information theoretic lower bound on the expected value of any collection of tests $\process{\tau_\alpha} \coloneqq \{\tau_\alpha: \alpha \in (0, 1)\}$ for a given null $\calP_0$, and we recall the statement next. 
\begin{fact}[Lower Bound]
    \label{fact:lower-bound} Let $\process{\tau_\alpha} \coloneqq \{\tau_\alpha: \alpha \in (0,1)\}$ denote a collection of level-$\alpha$ power-one tests for the composite null $\calP_0$. Then, for any fixed $\alpha \in (0,1)$, and $P \not \in \calP_0$ with $\gamma^*(P, \calP_0)>0$, we have 
    \begin{align}
         \frac{\EE_{P}[\tau_\alpha]}{J(\alpha, P, \calP_0)} \geq 1, \qtext{where} J(\alpha, P, \calP_0) \coloneqq \frac{\log(1/\alpha)}{\gamma^*(P, \calP_0)}.  
    \end{align}
    On taking $\alpha$ to zero, this implies that $\liminf_{\alpha \downarrow 0} \EE_P[\tau_\alpha]/J(\alpha, P, \calP_0) \geq 1$. 
\end{fact}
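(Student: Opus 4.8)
The plan is to reduce the composite lower bound to a simple-versus-simple sequential bound and then optimize over the null class. Since $J(\alpha,P,\calP_0) = \sup_{P_0\in\calP_0}\log(1/\alpha)/\dkl(P\parallel P_0)$ (terms with $\dkl(P\parallel P_0)=\infty$ contributing $0$), it suffices to prove that for each fixed $P_0\in\calP_0$ with $\dkl(P\parallel P_0)<\infty$ one has $\EE_P[\tau_\alpha]\ge\log(1/\alpha)/\dkl(P\parallel P_0)$; taking the supremum over such $P_0$ then yields the claim when $\gamma^*(P,\calP_0)<\infty$, while there is nothing to prove when $\gamma^*(P,\calP_0)=\infty$ (then $J=0$) or when $\EE_P[\tau_\alpha]=\infty$.

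First I would fix such a $P_0$, abbreviate $\tau=\tau_\alpha$, and set $Y_i = \log\frac{dP}{dP_0}(X_i)$, $L_n = \exp(\sum_{i\le n}Y_i)$, and $\mu = \dkl(P\parallel P_0) = \EE_P[Y_1]$ (note $P\ll P_0$ since $\mu<\infty$). The argument rests on two standard ingredients. (i) \emph{Change of measure / optional stopping.} Since $\bbone\{\tau<\infty\}$ has $P$-probability one (power-one) and $P_0$-probability at most $\alpha$ (level-$\alpha$), a change of measure on the stopped $\sigma$-algebra gives $\alpha\ge\PP_{P_0}(\tau<\infty)\ge\EE_P[L_\tau^{-1}] = \EE_P[e^{-\log L_\tau}]$, whence Jensen's inequality applied to the convex map $x\mapsto e^{-x}$ yields $\EE_P[\log L_\tau]\ge\log(1/\alpha)$. (Equivalently, apply the data-processing inequality for $\dkl$ to the statistic $\omega\mapsto\bbone\{\tau(\omega)<\infty\}$, together with $d_{\kl}(1\parallel q)=\log(1/q)$.) (ii) \emph{Wald's identity.} Because $\{\calM_n\}$ enlarges $\sigma(X_1,\dots,X_n)$ only by design randomness independent of the data, $X_{n+1}$ is independent of $\calM_n$ under $P$, so $\{\log L_n - n\mu\}$ is an $(\calM_n,P)$-martingale; combined with $\EE_P[\tau]<\infty$ and $\EE_P|Y_1|<\infty$ (the latter because $\EE_P[Y_1^-]=\EE_P[(\log\tfrac{dP_0}{dP})^+]\le\EE_P[\tfrac{dP_0}{dP}]\le 1$ and $\mu<\infty$), Wald's identity gives $\EE_P[\log L_\tau] = \mu\,\EE_P[\tau]$.

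Chaining (i) and (ii), $\mu\,\EE_P[\tau] = \EE_P[\log L_\tau]\ge\log(1/\alpha)$, which is exactly the simple-null bound above; taking the supremum over $P_0\in\calP_0$ with finite divergence finishes the fixed-$\alpha$ statement, and the $\liminf$ assertion is immediate since the bound holds for every $\alpha\in(0,1)$.

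The analytic inputs ($d_{\kl}(1\parallel q)=\log(1/q)$ and the $\log x\le x-1$ integrability estimate) are routine. The one place that genuinely requires care — and what I expect to be the main obstacle — is the measure-theoretic bookkeeping around a \emph{randomized, possibly infinite} stopping time on the enlarged filtration: one must verify that the likelihood-ratio process is still an $\calM_n$-martingale (this is exactly where the ``data-independent extra randomness'' structure of $\calM_n$ is used), that the change-of-measure step is valid even when $P_0\not\ll P$ (it survives as an inequality, which is the direction we need), and that everything passes to the limit under the truncation $\tau\wedge n\uparrow\tau$. The last point is handled by monotone convergence applied to $\EE_P[\log L_{\tau\wedge n}] = \mu\,\EE_P[\tau\wedge n]$, which also disposes of the $\EE_P[\tau]=\infty$ case automatically.
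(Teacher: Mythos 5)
Your proposal is correct and follows essentially the same route as the paper: fix a single null $P_0$ with $\dkl(P\parallel P_0)<\infty$, lower-bound $\EE_P[\log L_\tau]$ by $\log(1/\alpha)$ via a data-processing / change-of-measure argument applied to the $\{0,1\}$-valued decision $\boldsymbol{1}_{\tau<\infty}$, invoke Wald's identity to identify $\EE_P[\log L_\tau]=\dkl(P\parallel P_0)\,\EE_P[\tau]$, and then optimize over $P_0\in\calP_0$. The paper states the first step directly as the data-processing inequality for relative entropy (as depicted in \Cref{fig:data-processing}) rather than spelling out the change-of-measure-plus-Jensen derivation, but as you note these are the same inequality.
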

As illustrated further in~\Cref{fig:data-processing}, this result is a consequence of the data-processing inequality~(DPI) for stopped observation processes. In particular, consider a stream of observations $\{X_n: n \geq 1\} \simiid P_X$ with $P_X \in \{P, Q\}$, with $Q \in \calP_0$, $P \not \in \calP_0$, and $P \ll Q$. Let $\tau_\alpha$ denote any level-$\alpha$ power-one test for this problem, with $\EE_P[\tau_\alpha] < \infty$. Then, we can introduce a $\calM_{\tau_\alpha}$-measurable function $\psi:\lp \lp \cup_{m \in \mathbb{N}}\{m\}\times \calX^m \rp \cup \lp \{\infty\} \times \calX^{\mathbb{N}} \rp \rp  \to \{0, 1\}$ with $\psi(m, X_1, \ldots, X_m) = \boldsymbol{1}_{m< \infty}$, and observe that the Bernoulli random variable $\psi(\tau_\alpha, X^{\tau_\alpha})$ satisfies $\EE_Q[\psi(\tau_\alpha, X^{\tau_\alpha})] \leq \alpha$~(due to the level-$\alpha$ property), and $\EE_P[\psi(\tau_\alpha, X^{\tau_\alpha})] = 1$~(due to the power-one property). Then, as illustrated in~\Cref{fig:data-processing}, DPI implies that the relative entropy between $\mathrm{Law}_P(\tau_\alpha, X_1, \ldots, X_{\tau_\alpha})$ and $\mathrm{Law}_Q(\tau_\alpha, X_1, \ldots, X_{\tau_\alpha})$ cannot be smaller than the relative entropy between the corresponding outputs~(i.e., $\psi(\tau_\alpha, X^{\tau_\alpha})$), which is lower bounded by $\log(1/\alpha)$. Finally, we can apply Wald's identity to show that the larger relative entropy is equal to $\EE_P[\tau_\alpha] \dkl(P \parallel Q) \geq \log(1/\alpha)$, and minimizing this over all $Q \in \calP_0$ gives us the result recalled in~\Cref{fact:lower-bound}. 
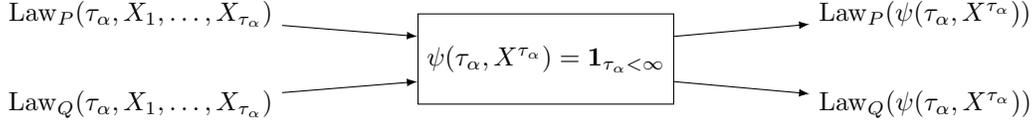
\begin{figure}[t!]
    \centering
\begin{tikzpicture}[>=latex]
  \node[draw, minimum width=1.8cm, minimum height=1.2cm] (psi) {$\psi(\tau_\alpha, X^{\tau_\alpha}) = \boldsymbol{1}_{\tau_\alpha < \infty}$};

  \node[left=1.8cm of psi, yshift=0.6cm]  (PX) {$\mathrm{Law}_P(\tau_\alpha, X_1, \ldots, X_{\tau_\alpha})$};
  \node[left=1.8cm of psi, yshift=-0.6cm] (QX) {$\mathrm{Law}_Q(\tau_\alpha, X_1, \ldots, X_{\tau_\alpha})$};
  \node[right=1.8cm of psi, yshift=0.6cm] (PY) {$\mathrm{Law}_P( \psi(\tau_\alpha, X^{\tau_\alpha}))$};
  \node[right=1.8cm of psi, yshift=-0.6cm] (QY) {$\mathrm{Law}_Q(\psi(\tau_\alpha, X^{\tau_\alpha}))$};

  \draw[->] (PX) -- (psi.170);
  \draw[->] (QX) -- (psi.190);
  \draw[->] (psi.10)  -- (PY);
  \draw[->] (psi.350) -- (QY);
\end{tikzpicture}
\caption{
The diagram above illustrates the data processing inequality for relative entropy~\citep[Theorem 2.17]{polyanskiy2025information}, which says that the distance between two input distributions~(as measured by $\dkl$) can never be smaller than the distance between the corresponding output distributions, after passing through a common Markov kernel. In our case, the channel corresponds to the decision rule $\boldsymbol{1}_{\tau_\alpha < \infty}$, which naturally leads to the conclusion recalled in~\Cref{fact:lower-bound}, since it implies that $\EE_P[\tau_\alpha] \dkl(P \parallel Q) \geq \log(1/\alpha)$ for any level-$\alpha$ power-one test $\tau_\alpha$, and for distributions $Q \in \calP_0$ and $P \not \in \calP_0$. 
}
\label{fig:data-processing}
\end{figure}

The impossibility result recalled in~\Cref{fact:lower-bound} then naturally leads to the following definition of ``optimal'' collection of tests. 
\begin{definition}[First-Order Optimality]
    \label{def:first-order-optimality} Given a null class $\calP_0$, we say that a collection of tests $\process{\tau_\alpha} \coloneqq \{\tau_\alpha: \alpha \in (0,1)\}$ for the null $\calP_0$ is \emph{first order-optimal}~(or simply optimal) if it satisfies the following for any $P \in \calP_1$, with $\gamma^*(P, \calP_0) = \inf_{Q \in \calP_0} \dkl(P \parallel Q)> 0$: 
    \begin{align}
        \limsup_{\alpha \downarrow 0} \frac{\EE_P[\tau_\alpha]}{J(\alpha, P, \calP_0)} \leq 1, \qtext{where} J(\alpha, P, \calP_0) = \frac{\log(1/\alpha)}{\gamma^*(P, \calP_0)}. 
    \end{align}
    This condition, combined with the lower bound of~Fact~\ref{fact:lower-bound} implies that this limit must be equal to $1$. 
\end{definition}

The focus of this paper is to design and analyze power-one tests that are optimal in the sense defined above. Before proceeding to the main results, we present a brief discussion of related works in this area that will provide the appropriate context to place our contributions. 

\subsection{Related Work and Overview of Our Results}
\label{subsec:related-work}
The study of power-one level-$\alpha$ sequential tests was pioneered by Robbins and collaborators in the 1960s and 70s. For example, for the one-sided Gaussian mean testing problem with $\process{X_n} = \{X_n: n \geq 1\} \simiid N(\mu, 1)$ and $H_0: \mu \leq 0$ versus $H_1: \mu > 0$, \citet{darling1967iterated} proposed  tests  that continue sampling indefinitely with probability at least $1-\alpha$ under $H_0$, while stopping almost surely under $H_1$. Their construction used moment generating function~(MGF) based martingales combined with a ``peeling'' argument that led to an iterated-logarithm boundary crossing of the running sum $\sum_{i=1}^n X_i$. 
In a follow-up work,~\citet{darling1968some} considered some nonparametric goodness-of-fit and stochastic dominance testing tasks, and developed sequential power-one variants of classical Kolmogorov-Smirnov (KS) style tests. Their construction essentially relied on a combinatorial deviation inequality for the KS statistic at any fixed $n$, combined with a simple union bound. They showed the level-$\alpha$ and power-one property, and also obtained upper bounds on the expected stopping times of the resulting tests in terms of the effect-size and the inverse of the rejection boundary. Some early follow-up works include~\citet{robbins1970statistical}, who developed tests based on mixture of likelihood-ratios~(and more generally test martingales) for various parametric and nonparametric problems, and \citet{robbins1972class} who discussed an alternative approach based in generalized-likelihood ratio process for parametric problems, with predictable estimates in the numerator and a fixed~(extremal) parameter in the denominator.

More recently, there has been a resurgence of interest in this area of anytime-valid inference over the last decade, and a detailed discussion can be found in the survey paper by~\citet{ramdas2023game} and the monograph by~\citet{ramdas2024hypothesis}. Recent works design and analyze new power-one tests in both the parametric~(e.g., \citet{grunwald2024safe, perez2024statistics}) and nonparametric~(e.g., \citet{waudby2024estimating, shekhar2023nonparametric, podkopaev2023sequential, henzi2024rank}) settings. These works often involve bounds on the expected stopping time under the alternative, and establish some notions of optimality. For instance, in the case of two-sample testing,~\citet{shekhar2023nonparametric} show the existence of ``hard'' problems for which the performance achieved by their test cannot be improved. In this paper, we are interested in analyzing the instance-wise optimality of level-$\alpha$ tests in the limit of small $\alpha$, as we discussed in~\Cref{def:first-order-optimality}. The most closely related works that address similar questions are~\citep{agrawal2025stopping} and~\citep{waudby2025universal}, and we discuss them below. 

\citet{waudby2025universal} studied the task of analyzing the performance under the alternative of a particular class of power-one tests, associated with $e$-processes of the form 
\begin{align}
    W_0 = 1, \quad W_n = W_{n-1} \times \lp \lambda_n E_n^{(1)} + (1-\lambda_n) E_n^{(2)} \rp, 
\end{align}
where $\{E_n^{(j)}: n \geq 1\}$ for $j=1,2$ are a collection of \iid $e$-values under the null, and $\process{\lambda_n} \coloneqq \{\lambda_n: n \geq 1\}$ denotes a sequence of predictable ``bets''. This process can be interpreted as the wealth of an investor~(who starts off with $W_0=\$1$) in a two-stock portfolio with price-relatives $E_n^{(1)}$ and $E_n^{(2)}$ in the $n^{th}$ investing period, and $(\lambda_n, 1-\lambda_n)$ denotes the rebalancing done by the investor at the start of the $n^{th}$ period. One of the results proved by~\citet{waudby2025universal} is that if the betting~(or investing) strategy $\process{\lambda_n}$ has vanishing per-sequence regret, then the expected stopping time of the resulting test satisfies $\limsup_{\alpha \downarrow 0} \EE[\tau_\alpha]/\log(1/\alpha) \leq 1/\sup_{\lambda \in [0,1]} \EE[\log(\lambda E^{(1)} + (1-\lambda) E^{(2)})]$ under certain mild moment conditions~\citep[Theorem 3.3]{waudby2025universal}. The authors also show that this dependence cannot be improved for this particular class of tests. Our work differs from~\citet{waudby2025universal} in scope as well as technical details: instead of working with a given class of $e$-values or processes which implicitly define the null, our goal is to start with a given composite null class and construct appropriate power-one tests that are optimal. The notion of optimality we care about is w.r.t. all feasible power-one tests, and not the restricted class that was the focus of~\citet{waudby2025universal}.

Another related paper studying the fundamental limits of power-one tests is~\citep{agrawal2025stopping}.  They consider a general testing problem with a composite null $\calP_0$, and show that any level-$\alpha$ test $\tau_\alpha'$ must satisfy $\EE_P[\tau_\alpha'] \geq \log(1/\alpha)/\gamma^*(P, \calP_0)$, as we recalled in~\Cref{fact:lower-bound}. Next, they show that for tests of the form $\tau_\alpha \coloneqq \inf \{n \geq 1: W_n \geq 1/\alpha\}$ with $\process{W_n}$ denoting an $e$-process for the null $\calP_0$, \citet[Theorem B.1]{agrawal2025stopping} showed that 
\begin{align}
    \liminf_{n \to \infty} \frac{1}{n} \log W_n \; \stackrel{\text{a.s.}}{\geq}\; \gamma^*(P, \calP_0) \quad \implies \quad \limsup_{\alpha \downarrow 0} \frac{\EE_P[\tau_\alpha]}{\log(1/\alpha)/\gamma^*(P, \calP_0)} \leq 4. 
\end{align}
Our contributions are complementary to~\citet{agrawal2025stopping}.  Rather than identifying sufficient conditions on $e$-processes for the associated tests to be optimal, we  develop  general design principles, via DV representation and minimax theorem, for building $e$-processes~(and hence, power-one tests) that achieve the optimal lower bound as $\alpha \downarrow 0$ with a sharp constant~(note that this constant factor refinement is a consequence of the alternative  analysis route developed in our paper, and not the primary driving goal of the paper).

\paragraph{Overview of our results.} We begin in~\Cref{sec:finite-alphabet} by considering the technically simpler finite-alphabet setting, where the null is represented by a closed subset $\Theta_0$ of the simplex of probability mass functions. We analyze a test $\tau_\alpha$ defined as the first $1/\alpha$ crossing of a \emph{universal $e$-process}, constructed as a ratio of a universal code~(e.g., a Krichevsky-Trofimov code or equivalently, a $\mathrm{Dirichlet}(1/2, \ldots, 1/2)$ mixture) in the numerator and the running maximum likelihood over the null $\Theta_0$ in the denominator. Our first main  result,~\Cref{theorem:UI-test-finite}, shows that this test is optimal in the limit of small $\alpha$. 

A key observation underlying our proof of~\Cref{theorem:UI-test-finite} is a saddle point characterization of $\gamma^*(P_\theta, \calP_0) \equiv \gamma^*(\theta, \Theta_0)$. In particular,  using the DV-representation of relative entropy~(\Cref{fact:donsker-varadhan}) and Sion's minimax theorem~(\Cref{fact:Sions-Minimax-Theorem}), we obtain a min-max~(equivalent max-min) representation that identifies an ``oracle'' witness function that achieves the optimal value in $\gamma^*$. This perspective motivates our general $e$-process construction~(\Cref{def:DV-e-process}) for arbitrary alphabets based on a predictable empirical-risk-minimization~(ERM) based choice of test functions from a class $\calF$ that approximate $f^*$. Under certain conditions stated in~\Cref{assump:general}, we establish the optimality of tests associated with such $e$-processes in the sense of~\Cref{def:first-order-optimality}. As a concrete nonparametric example, we discuss an application of our result to a problem in which the null $\calP_0$ is a compact convex class of  H\"older smooth densities over a compact domain, and the test functions lie in an RKHS associated with a universal kernel. 

We end the paper with a brief discussion of the practical aspects associated with implementing our proposed $e$-processes in~\Cref{subsec:computational-aspects}. In particular, for certain finite dimensional problems, we can employ existing results from stochastic saddle-point optimization to obtain computationally feasible tests using our developed methodology. For larger problems, involving complex and high-dimensional data types~(such as images, or videos), we briefly discuss a heuristic strategy that can directly leverage the well-established training pipelines of generative machine learning~(ML) models, such as $f$-GANs. 

\section{Finite Alphabet Case}
\label{sec:finite-alphabet}
In this section, we work with the simplifying assumption that $\calX = \{x_1, \ldots, x_m\}$ denotes a finite alphabet. In this setting, the class of probability distributions $\calP(\calX)$ is parametrized by the $(m-1)$-dimensional simplex $\Theta = \{\theta \in [0,1]^m: \sum_{j=1}^m \theta[j] = 1\}$. Let $\calP_0 \subset \calP(\calX)$ denote a class of distributions indexed by a convex, compact subset $\Theta_0 \subset \Theta$; that is, $\calP_0 = \{P_\theta: \theta \in \Theta_0\}$, where $P_\theta$ is the probability measure on $\calX$ associated with the probability mass function $\theta$. Given $\{X_n: n\geq 1\} \simiid P_\theta$ and a confidence parameter $\alpha \in (0, 1]$, our goal is to construct a level-$\alpha$ power-one test to decide between 
\begin{align}
    H_0: \theta \in \Theta_0, \qtext{versus} H_1: \theta \in \Theta_1,  \label{eq:composite-test-finite-alphabet}
\end{align}
for another subset $\Theta_1 \subset \Theta$ that is disjoint with $\Theta_0$. 

For any pmf $\theta \in \Theta$, and a sequence $x^n = (x_1, \ldots, x_n)$, we will use the shorthand $\theta^n[x^n]$ to represent $\prod_{i=1}^n \theta[x_i]$. For a distribution $P \equiv P_\theta \in \calP(\calX)$ with $\theta \not \in \Theta_0$, we will alternate between the notations $\gamma^*(P, \calP_0)$ and $\gamma^*(\theta, \Theta_0)$ in this section. Both terms will be used to refer to the minimum divergence of $\theta$~(or $P_\theta$) to the null $\Theta_0$~(or $\calP_0$): 
\begin{align}
\gamma^* \equiv \gamma^*(\theta, \Theta_0) \equiv \gamma^*(P_\theta, \calP_0) = \inf_{\theta_0 \in \Theta_0} \dkl(P_\theta \parallel P_{\theta_0}), \qtext{and} \theta_0^\dagger \equiv \theta_0^\dagger(\theta) \in \argmin_{\theta_0 \in \Theta_0}\dkl(P_\theta \parallel P_{\theta_0}). 
\end{align}
Since we have assumed that $\Theta_0$ is a compact subset of $\Theta$, it implies that the infimum in the definition of $\gamma^*(\theta, \Theta_0)$ is achieved, and the term $\theta^\dagger(\theta)$ is well defined.  Within this context, our goal is to construct  level-$\alpha$ stopping times $\{\tau_\alpha: \alpha \in (0, 1]\}$ that are provably optimal in the sense discussed in~\Cref{def:first-order-optimality}. 

\paragraph{Construction of the test.} A general strategy for constructing power-one tests is to define them as the first $1/\alpha$ crossing of a process $\process{W_n} \coloneqq \{W_n: n \geq 0\}$ adapted to the filtration $ \process{\calM_n} \coloneqq \{\calM_n: n \geq 0\}$, satisfying the following two conditions: 
\begin{itemize}
    \item Under $H_0$, if $\theta_0 \in \Theta_0$ is the true parameter, then the process $\{W_n: n \geq 1\}$ is unlikely to take large values. 
    \item Under $H_1$, if $\theta_1 \in \Theta_1$ is the true parameter, then the $\{W_n: n \geq 1\}$ grows rapidly at an exponential rate, that is approximately equal to $\gamma^*(\theta_1, \Theta_0)$. 
\end{itemize}
In other words, $\process{W_n}$  quantifies the accumulated evidence against the null based on the observations $\process{X_n}$. 
If $\Theta_0$ and $\Theta_1$ were singletons $\{\theta_0\}$ and $\{\theta_1\}$, then the optimal choice  would be the likelihood ratio  process with $W_n = L_n(X^n; \theta_1, \theta_0) = \prod_{i=1}^n \theta_1^n[X^n]/\theta_0^n[X^n]$. For the case of a point null $\Theta_0 = \{\theta_0\}$ but composite alternative $\Theta_1$, a natural idea is to take mixtures 
\begin{align}
    \Wmix_n = \int_{\Theta} L_n(X^n; \theta_1, \theta_0) \pi_J(\theta) d\theta = \frac{\int_{\Theta} \theta^n[X^n] \pi_J(\theta) d\theta}{\theta_0^n[X^n]}, 
\end{align}
where $\pi_J$ denotes the Jeffreys prior over the probability simplex $\Theta$. The reason for this particular formulation is that classical results from universal compression imply that under any $\theta \in \Theta_1$, and $n \geq 1$, we have 
\begin{align}
    \log \Wmix_n \geq \log L_n(X^n; \theta, \theta_0) - \frac {m-1}2 \log n - C_m, 
\end{align}
for some constant $C_m>0$. In other words, under any $\theta \in \Theta_1$,  the mixture process $\{\Wmix_n: n \geq 1\}$ grows essentially as quickly as the optimal likelihood ratio process modulo a small logarithmic regret term. 
Finally,  to deal with the most general case of composite null and alternative, we  define $\UIevalue$ as 
\begin{align}
    \UIevalue_n = \frac{\text{Mixture over all $\theta$}}{\text{Supremum over all null parameters}} = \frac{ \int_{\Theta} \theta^n[X^n] \pi_J(\theta) d\theta}{\sup_{\theta_0 \in \Theta_0} \theta_0^n[X^n]}. 
\end{align}
This simple idea, introduced by~\citet[Section 8]{wasserman2020universal}, ends up giving us an (as we will show later)  optimal test for the general setting of composite null and alternative. Using this process, given the stream of observations $\process{X_n}$, we define our level-$\alpha$ test as 
\begin{align}
    \tau_\alpha = \inf \{n \geq 1: \UIevalue_n \geq 1/\alpha\}, \qtext{with} \UIevalue_0 = 1, \; \text{and}\; \UIevalue_n = \frac{\theta_J^n[X^n]}{\sup_{\theta_0 \in \Theta_0} \theta_0[X^n] }.  \label{eq:UI-eprocess}
\end{align}
Here, $\theta_J^n$ denotes the mixture distribution over $\calX^n$ with respect to Jeffreys prior, i.e., the Krichevsky-Trofimov code~\citep{krichevsky1981performance}, and observe that the denominator is simply the maximum likelihood value based on the first $n$ observations: 
\begin{align}
\theta_J^n[x^n] = \int_{\Theta} \theta^n[x^n] \pi_J(\theta) d\theta, \qtext{and} \thetahat_{0}^n[x^n] = \sup_{\theta_0 \in \Theta_0}  \theta_0^n[x^n].  
\end{align}
\begin{remark}
    Using the properties of $\mathrm{Dirichlet}(1/2, \ldots, 1/2)$ distribution~(which is the Jeffreys prior in this case), it follows that the mixture $\theta_J^n$ can be written as 
    \begin{align}
        \theta_J^n[x^n] =  \prod_{i=1}^n \theta_J[x_i \mid x^{i-1}], \qtext{with} \theta_J[x \mid x^{i-1}] = \frac{ 1/2 + \sum_{j=1}^{i-1} \boldsymbol{1}_{x_j=x} }{ m/2 + i-1}. 
    \end{align}
    That is, each conditional $\theta_J[\cdot \mid x^{i-1}]$ is the so-called ``add-$1/2$'' predictor, where we simply add a fictitious count of $1/2$ to each $x \in \calX$ to smooth out our estimate. Thus, the numerator in~\eqref{eq:UI-eprocess} can be updated incrementally in a computationally efficient manner. 
    However, the denominator requires recomputing the maximum likelihood value from scratch in each round for most problems, meaning that evaluating $\UIevalue_n$ can possibly end up being  computationally demanding depending on the definition of $\Theta_0$.  Since our focus in this paper is on the statistical performance of the test, we do not discuss the computational issues further. 
\end{remark}
We now proceed to the analysis of the tests $\process{\tau_\alpha} = \{\tau_\alpha: \alpha \in (0, 1]\}$ defined in~\eqref{eq:UI-eprocess}. 

\begin{theorem}
    \label{theorem:UI-test-finite} For the testing problem with composite null  in~\eqref{eq:composite-test-finite-alphabet}, the class of tests $\process{\tau_\alpha}$ defined in~\eqref{eq:UI-eprocess} satisfy the following for any $\theta \not \in \Theta_0$ such that $\gamma^*(\theta, \Theta_0) \coloneqq \inf_{\theta_0 \in \Theta_0} \dkl(\theta \parallel \theta_0) > 0$: 
    \begin{align}
        &\sup_{\theta_0 \in \Theta_0} \mathbb{P}_{\theta_0}(\tau_\alpha< \infty) \leq \alpha, \; \;\forall \alpha \in (0, 1], \qtext{and}
          \limsup_{\alpha \downarrow 0} \frac{\mathbb{E}_{\theta}[\tau_\alpha]}{J(\theta, \Theta_0, \alpha)}  \leq 1, \qtext{where} J(\theta, \Theta_0, \alpha) = \frac{\log(1/\alpha)}{\gamma^*(\theta, \Theta_0)}. 
    \end{align}
     This establishes the optimality of the test $\process{\tau_\alpha}$ in the sense of~\Cref{def:first-order-optimality}.  Additionally, note that the finiteness of the expected stopping times also implies the weaker power-one property of $\process{\tau_\alpha}$.
\end{theorem}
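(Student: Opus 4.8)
The plan is to prove the level-$\alpha$ claim by dominating $\UIevalue_n$ with a nonnegative martingale and invoking Ville's inequality, and to prove the stopping-time bound by first pinning down the exact almost-sure exponential growth rate of $\UIevalue_n$ under $P_\theta$ and then upgrading it, via exponential concentration of the empirical pmf, to a bound on $\EE_\theta[\tau_\alpha]$. Throughout I treat the nontrivial regime $0<\gamma^*\coloneqq\gamma^*(\theta,\Theta_0)<\infty$. For the first claim, fix any $\theta_0\in\Theta_0$; then $\UIevalue_n \le \theta_J^n[X^n]/\theta_0^n[X^n]\eqqcolon M_n^{(\theta_0)}$, and $\{M_n^{(\theta_0)}\}_{n\ge 0}$ is a nonnegative $P_{\theta_0}$-martingale (w.r.t.\ the natural filtration) with $M_0^{(\theta_0)}=1$, since $\theta_J^n$ is a pmf on $\calX^n$. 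Ville's inequality gives $\PP_{\theta_0}(\sup_n M_n^{(\theta_0)}\ge 1/\alpha)\le\alpha$, and since $\{\tau_\alpha<\infty\}=\{\sup_n\UIevalue_n\ge 1/\alpha\}\subseteq\{\sup_n M_n^{(\theta_0)}\ge 1/\alpha\}$, taking the supremum over $\theta_0\in\Theta_0$ finishes the level-$\alpha$ part.

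For the growth rate, let $\hat P_n$ denote the empirical pmf of $X_1,\dots,X_n$. The denominator of $\UIevalue_n$ equals $\exp\!\big(n\sup_{\theta_0\in\Theta_0}\langle\hat P_n,\log\theta_0\rangle\big)$, while the universal-coding (KT redundancy) bound recalled above gives $\log\theta_J^n[X^n]\ge n\langle\hat P_n,\log\theta\rangle-\tfrac{m-1}{2}\log n-C_m$, so
\[
  \tfrac1n\log\UIevalue_n \;\ge\; h(\hat P_n) - O(\log n/n),\qquad h(Q)\coloneqq \langle Q,\log\theta\rangle-\sup_{\theta_0\in\Theta_0}\langle Q,\log\theta_0\rangle .
\]
A direct computation rewrites $h(Q)=\gamma^*(Q,\Theta_0)-\dkl(Q\parallel\theta)$, so $h(\theta)=\gamma^*$. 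The analytic heart of the argument is that $Q\mapsto\gamma^*(Q,\Theta_0)$ is continuous at $\theta$: upper semicontinuity is immediate from $\gamma^*(Q,\Theta_0)\le\dkl(Q\parallel\theta_0^\dagger)$ together with continuity of $\dkl(\cdot\parallel\theta_0^\dagger)$ on the face $\{Q:\supp Q\subseteq\supp\theta\}$ (which contains every $\hat P_n$), and lower semicontinuity follows from compactness of $\Theta_0$ and joint lower semicontinuity of $\dkl$ --- given $Q_k\to\theta$ with near-minimizers $\theta_0^{(k)}\in\Theta_0$, any subsequential limit $\theta_0^\infty$ satisfies $\dkl(\theta\parallel\theta_0^\infty)\le\liminf_k\dkl(Q_k\parallel\theta_0^{(k)})$, forcing $\liminf_k\gamma^*(Q_k,\Theta_0)\ge\gamma^*$. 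Since also $\dkl(\hat P_n\parallel\theta)\to 0$, this yields $\tfrac1n\log\UIevalue_n\to\gamma^*$ $P_\theta$-a.s.; quantitatively, for every $\eps\in(0,\gamma^*)$ lower semicontinuity supplies $\delta>0$ with $\{h(\hat P_n)<\gamma^*-\eps/2\}\subseteq\{\|\hat P_n-\theta\|_1>\delta\}$, whence Hoeffding's inequality (or Sanov's theorem) for the finite-alphabet empirical pmf gives $\PP_\theta(\tfrac1n\log\UIevalue_n<\gamma^*-\eps)\le C e^{-cn}$ for all $n\ge n_0(\eps)$.

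To convert this into a bound on $\EE_\theta[\tau_\alpha]$, fix $\eps\in(0,\gamma^*)$ and set $T_\eps\coloneqq\sup\{n\ge 1:\tfrac1n\log\UIevalue_n<\gamma^*-\eps\}$. A Borel--Cantelli-type summation of the exponential tail, $\EE_\theta[T_\eps]\le n_0+\sum_{k>n_0}k\,\PP_\theta(\tfrac1k\log\UIevalue_k<\gamma^*-\eps)$, shows $\EE_\theta[T_\eps]<\infty$. For every $n>T_\eps$ we have $\UIevalue_n\ge e^{n(\gamma^*-\eps)}$, hence $\tau_\alpha\le\max\big(T_\eps+1,\lceil\log(1/\alpha)/(\gamma^*-\eps)\rceil\big)$ and therefore $\EE_\theta[\tau_\alpha]\le\EE_\theta[T_\eps]+2+\log(1/\alpha)/(\gamma^*-\eps)$; dividing by $J(\theta,\Theta_0,\alpha)=\log(1/\alpha)/\gamma^*$ and letting $\alpha\downarrow 0$ gives $\limsup_{\alpha\downarrow 0}\EE_\theta[\tau_\alpha]/J(\theta,\Theta_0,\alpha)\le\gamma^*/(\gamma^*-\eps)$, and then $\eps\downarrow 0$ yields the bound $\le 1$. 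With \Cref{fact:lower-bound} this makes the limit equal $1$ (first-order optimality in the sense of \Cref{def:first-order-optimality}), and finiteness of $\EE_\theta[\tau_\alpha]$ gives the power-one property. I expect the main obstacle to be precisely this last upgrade --- turning an almost-sure growth rate into a bound on $\EE_\theta[\tau_\alpha]$ with the sharp leading constant rather than merely an $O(1)$ multiple --- which rests on two ingredients: the algebraic cancellation by which the KT code's empirical-entropy term combines with the denominator to leave exactly $n\,\gamma^*(\hat P_n,\Theta_0)$, and the continuity of $\gamma^*(\cdot,\Theta_0)$ at $\theta$, which genuinely uses the compactness of $\Theta_0$.
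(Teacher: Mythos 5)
Your proof is correct and takes a genuinely different route from the paper's. Both proofs handle the level-$\alpha$ claim identically (domination by the nonnegative martingale $\theta_J^n/\theta_0^n$ plus Ville's inequality) and both open the expected-stopping-time argument with the same universal-coding regret bound, but from there the strategies diverge. The paper applies the Donsker--Varadhan representation together with Sion's minimax theorem to write $\dkl(\hat P_n\parallel\Theta_0)=\sup_f(\EE_{\hat P_n}[f]-\psi_0(f))$, fixes the oracle witness $f$ attaining $\gamma^*$, lower-bounds $\log W_n$ by the random walk $S_n(f)=\sum_i f(X_i)-n\psi_0(f)$ with \iid increments, shows $\tau_\alpha\le T_f$ where $T_f$ is a boundary crossing of that walk, and then invokes Wald's identity plus a fixed-point lemma to solve for $\EE_\theta[T_f]$. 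You instead observe that the lower bound on $\tfrac1n\log W_n$ is $h(\hat P_n)=\gamma^*(\hat P_n,\Theta_0)-\dkl(\hat P_n\parallel\theta)$ (up to vanishing regret), prove continuity of $\gamma^*(\cdot,\Theta_0)$ at $\theta$ directly via compactness of $\Theta_0$ and joint lower semicontinuity of $\dkl$, use Hoeffding/Sanov concentration of $\hat P_n$ to get exponential tails on the bad set where $\tfrac1n\log W_n<\gamma^*-\eps$, and then bound $\tau_\alpha$ by a last-exit time $T_\eps$ with finite expectation, so that $\EE_\theta[\tau_\alpha]\le\EE_\theta[T_\eps]+2+\log(1/\alpha)/(\gamma^*-\eps)$, after which $\alpha\downarrow 0$ then $\eps\downarrow 0$ close the argument. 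Your route is more elementary --- no DV representation, no minimax swap, no Wald's identity, no fixed-point lemma --- and arguably cleaner for the finite-alphabet case; its reliance on concentration is explicit rather than hidden as an integrability side-check (the paper also uses Hoeffding, but only to verify $\EE[T_f]<\infty$ before applying Wald). What the paper's heavier machinery buys is motivation: the DV--minimax step identifies the oracle test function $f^*$ and is precisely the structure that drives the general-alphabet construction in \Cref{sec:general}, whereas your argument is tightly coupled to finite alphabets (explicit empirical pmfs, Sanov-type concentration, continuity of $\gamma^*$ on the simplex) and does not generalize in the same way. One small inconsistency in your last sentence: you describe the cancellation as leaving "exactly $n\,\gamma^*(\hat P_n,\Theta_0)$", but the version you actually use (benchmarking the KT code against the true $\theta$ rather than the MLE) leaves $n\,h(\hat P_n)=n\big(\gamma^*(\hat P_n,\Theta_0)-\dkl(\hat P_n\parallel\theta)\big)$; this is harmless since $\dkl(\hat P_n\parallel\theta)\to 0$, but it is not identical to the paper's version.
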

We present a detailed proof of this result in~\Cref{proof:UI-test}. An important fact about the process $\{\UIevalue_n: n \geq 0\}$ is that it is neither a martingale nor a supermartingale under the null. Yet, as noted by~\citet{wasserman2020universal}, we can still employ Ville's inequality on this process to control the type-I error because, for every null $\theta_0 \in \Theta_0$,  there exists a nonnegative supermartingale~(or actually a nonnegative martingale) that dominates the process $\{\UIevalue_n: n \geq 0\}$ pointwise. This is one of the defining properties of \emph{e-processes}~\citep[Lemma 6]{ramdas2020admissible}, which are fundamental tools in sequential anytime-valid inference.  

\section{Extension to General Alphabets}
\label{sec:general}

A closer look at the proof of~\Cref{theorem:UI-test-finite}~(detailed in~\Cref{proof:UI-test}) reveals that, in addition to relying on the finiteness of the alphabet $\calX$, the argument also relies strongly on two things: (i) the (weak) compactness of the null class, along with (ii) the application of Sion's minimax theorem.  Together, these two facts allow an interchange the $\inf$ and $\sup$ in the definition of $\gamma^*(\theta, \Theta_0)$: 
\begin{align}
    \gamma^*(\theta, \Theta_0) = \inf_{\theta_0 \in \Theta_0} \sup_{f:\calX \to \R} \EE_{\theta}[f(X)] - \log \EE_{\theta_0}[e^{f(X)}] \;=\; \sup_{f:\calX \to \R} \inf_{\theta_0 \in \Theta_0}  \EE_{\theta}[f(X)] - \log \EE_{\theta_0}[e^{f(X)}]. \label{eq:variational-divergence-to-null}
\end{align}
This, in turn, naturally leads to the definition of the ``oracle $e$-process'' $\{\prod_{i=1}^n \exp \lp f^*(X_i) - \psi_0(f^*) \rp: n \geq 1\}$, with $f^*$ denoting a function achieving the supremum.  We can use this as a starting point to construct a new class of $e$-processes for general alphabets and establish their optimality under certain conditions. In this section, we first design and analyze such $e$-processes and then discuss their computational feasibility in~\Cref{subsec:computational-aspects}. 

Throughout this section, we work with a measurable space $(\calX, \calB)$, where $\calX$ is a metric space endowed with a metric $\rho$, and $\calB$ is the corresponding Borel sigma algebra. While the metric structure is not strictly necessary in our argument, it allows some simplification of the proof of our main result since the DV representation of relative entropy between distributions on such spaces can be defined using continuous bounded functions~(\Cref{fact:donsker-varadhan}). 
Let $\calP_0 = \{P_{\theta_0}: \theta_0 \in \Theta_0\}$ denote the  class of null distributions on $\calX$ indexed by some parameter set $\Theta_0$. Let $\calF:\calX \to \R$ denote a convex family of functions on $\calX$, and define 
\begin{align}
    d_\calF(P \parallel \calP_0) = \sup_{f \in \calF} \,\lbr \EE_P[f(X)] - \psi_0(f) \rbr, \qtext{where} \psi_0(f) = \sup_{Q \in \calP_0} \log \lp \EE_{Q}[e^{f(X)}] \rp. 
\end{align}

We first record our assumptions formally, where we also define the class of alternatives that we consider in this section using the function class $\calF$. 
\begin{assumption}
    \label{assump:general}  We will work under the following assumptions: 
    \begin{itemize}
        \item \textbf{(A1): Null Class.} $\calP_0$ is a weakly compact and convex subset of $\calP(\calX)$. 
        \item \textbf{(A2): Alternative Class.}  For a given function class $\calF:\calX \to \R$ and a null class $\calP_0$, the set of alternative distributions,  $\calP_1 \subset \calP(\calX) \setminus \calP_0$, consists of $P \in \calP(\calX)$ with 
        \begin{align}
            \dkl(P \parallel \calP_0) = \inf_{Q \in \calP_0} \dkl(P \parallel Q) =  d_{\calF}(P \parallel \calP_0) >0. 
        \end{align}
        
        \item \textbf{(A3): Uniform Moment Bound on $\calF$ under the alternative.}  For every $P \in \calP_1$, there exists a constant $B_P < \infty$, and a $\delta>0$, such that 
        \begin{align}
            \sup_{f \in \calF} \EE_P\lb  \left\lvert f^{4+\delta}(X) \right\rvert \rb \leq B_P < \infty. 
        \end{align}

        \item \textbf{(A4): Uniform MGF Bound on $\calF$ under the null.} There exists a constant $B_0 < \infty$, such that 
        \begin{align}
            \sup_{f \in \calF} |\psi_0(f)| = \sup_{f \in \calF} \sup_{\theta_0\in \Theta_0} \left \lvert \log \lp \EE_{\theta_0}[e^{f(X)}] \rp \right \rvert \leq B_0 < \infty. 
        \end{align}
        \item \textbf{(A5): Learnability of $\calF$.} For each $P \in \calP_1$, there exists a sequence $r_n \equiv r_n(P, \calF) \to 0$, such that for all $n \geq 1$, we have the following, with the same $\delta>0$ from~\textbf{(A3)}: 
        \begin{align}
            \PP_P\lp \sup_{f \in \calF} \left\lvert \frac 1n  \sum_{i=1}^n f(X_i) - \EE_P[f(X)] \right\rvert \leq r_n \rp     \geq 1 - \frac{1}{n^{2 + \tfrac{\delta}{3}}}, 
        \end{align}
        where $X_1, X_2, \cdots$ are \iid draws from $P$.  
    \end{itemize}
\end{assumption}
Some of these conditions, such as \textbf{(A4)}, are not strictly necessary and can be weakened. We use the above statements as a balance between generality and simplicity of the proofs. Next, we propose a sequential test based on the variational representation of the divergence to null term in~\eqref{eq:variational-divergence-to-null}, which uses a sequence of ``greedy'' or empirical risk minimization~(ERM) based approximations of $f^*$. 
\begin{definition}[DV $e$-process]
    \label{def:DV-e-process}
    Consider a function class $\calF$, and a null hypothesis represented by $\calP_0 = \{P_\theta: \theta \in \Theta_0\}$. Given a stream of observations $\{X_n: n \geq 1\} \overset{\iid}{\sim} P \in \calP_0 \cup \calP_1$, we can define the process $\process{W_n} \coloneqq \{W_n: n \geq 0\}$, with $W_0 = 1$, $f_1$ an arbitrary element of $\calF$,  and
    \begin{align}
        W_n = W_{n-1} \times \exp \lp f_n(X_n) - \psi_0(f_n) \rp, \qtext{with} f_n \in \argmax_{f \in \calF} \; \frac 1{n-1} \sum_{i=1}^{n-1} f(X_i) - \psi_0(f), \quad \text{for } n\geq 2.
    \end{align}
    This can be used to define a level-$\alpha$ test for $\calP_0$ as $\tau_\alpha = \inf \{n \geq 1: W_n \geq 1/\alpha\}$ for all $\alpha \in (0, 1]$. 
\end{definition}
In this section, we will establish that under~\Cref{assump:general}, the tests defined above are optimal in the sense of~\Cref{def:first-order-optimality}. We touch upon some computational issues of implementing such tests in~\Cref{subsec:computational-aspects}, and also discuss two important instantiations~(for finite alphabets, and for testing means of bounded observations) in~Appendix~\ref{appendix:DV-instantiations}. 

\begin{remark}
    \label{remark:definition-of-erm} In some cases, the $\argmax$ above may not be well defined or may require appeals to a measurable selection theorem to be made completely rigorous. To avoid such cases, a more robust definition might be to work with an $f_n$ such that $\frac 1{n-1} \sum_{i=1}^{n-1} f_n(X_i) - \psi_0(f_n) \geq \sup_{f \in \calF} \frac 1{n-1} \sum_{i=1}^{n-1} f(X_i) - \psi_0(f) - s_n$, for some real-valued sequence $\process{s_n}$ converging to $0$. In fact, instead of using a specific ERM estimator/predictor as in the definition above, we may generalize the definition for any  predictable sequence $\process{f_n} = \{f_n: n \geq 1\}$, and study conditions on the quality of predictions $\process{f_n}$ to ensure optimality. We leave this investigation for future work. 
\end{remark}

The process $\{W_n: n \geq 0\}$ and the associated tests $\process{\tau_\alpha}$ depend on the ability to compute $\psi_0(f_n) = \sup_{Q \in \calP_0} \log E_{Q}[e^{f(X)}]$, which may be computationally infeasible in some practical problems. We briefly discuss the computational aspects of implementing this scheme in~\Cref{subsec:computational-aspects}, but for now, we focus on analyzing its statistical properties. 
\begin{theorem}
    \label{theorem:general-DV-e-process} 
    Under the conditions stated in~\Cref{assump:general}, for any $\alpha\in (0, 1)$, the test $\tau_\alpha$ introduced in~\Cref{def:DV-e-process}, is level-$\alpha$ for any $Q \in \calP_0$. Furthermore, under $H_1$, for any $P \in \calP_1$, the tests $\process{\tau_\alpha}$ satisfy  
    \begin{align}
        \limsup_{\alpha \downarrow 0} \frac{\mathbb{E}_P[\tau_\alpha]}{\log(1/\alpha)/\gamma^*} \leq 1, \qtext{where} \gamma^* \equiv \gamma^*(P, \calP_0) \coloneqq \inf_{Q \in \calP_0} \dkl(P \parallel Q). 
    \end{align}
\end{theorem}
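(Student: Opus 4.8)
The plan is to prove the two assertions separately. \emph{Type-I validity} will come from showing that $\process{W_n}$ is an $e$-process for $\calP_0$: for each fixed $Q\in\calP_0$ I would introduce $M_n^Q\coloneqq\prod_{i=1}^n e^{f_i(X_i)}/\EE_Q[e^{f_i(X)}]$, which is finite by \textbf{(A4)}, and which is a nonnegative $Q$-martingale started at $1$ because $f_i$ is $\calM_{i-1}$-measurable and $X_i$ is independent of $\calM_{i-1}$ under $Q$; since $\psi_0(f)\ge\log\EE_Q[e^{f(X)}]$ for every $f\in\calF$, we have $W_n\le M_n^Q$ pointwise, so Ville's inequality applied to $\process{M_n^Q}$ gives $\PP_Q(\tau_\alpha<\infty)=\PP_Q(\sup_n W_n\ge1/\alpha)\le\alpha$, uniformly in $Q$. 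This is precisely the $e$-process domination property invoked after \Cref{theorem:UI-test-finite}.

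For the power bound, fix $P\in\calP_1$ and, writing $\bar f_i\coloneqq\EE_{X\sim P}[f_i(X)]$ (an $\calM_{i-1}$-measurable random variable), decompose $\log W_n=\sum_{i=1}^n(f_i(X_i)-\psi_0(f_i))=M_n+D_n$ with $M_n\coloneqq\sum_{i=1}^n(f_i(X_i)-\bar f_i)$ a $P$-martingale (by predictability of $f_i$) and $D_n\coloneqq\sum_{i=1}^n(\bar f_i-\psi_0(f_i))$ predictable. I would then establish two estimates. (i)~\emph{Drift}: since $f_i\in\calF$, assumption \textbf{(A2)} gives $\bar f_i-\psi_0(f_i)\le d_\calF(P\parallel\calP_0)=\gamma^*$ for every $i$, while on the event $E_{i-1}$ that the uniform deviation bound of \textbf{(A5)} holds at sample size $i-1$ the near-optimality of the ERM $f_i$ yields the matching lower bound $\bar f_i-\psi_0(f_i)\ge\gamma^*-2r_{i-1}-s_{i-1}$. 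Letting $V_\infty$ be the number of indices $j$ at which $E_j$ fails --- a.s.\ finite, with $\EE_P[V_\infty]\le\sum_j j^{-(2+\delta/3)}<\infty$, by \textbf{(A5)} and Borel--Cantelli --- and using $|\bar f_i-\psi_0(f_i)|\le B_P^{1/(4+\delta)}+B_0$ (from \textbf{(A3)}--\textbf{(A4)}) on the exceptional indices, this gives $D_n/n\ge\gamma^*-a_n-cV_\infty/n$ for a deterministic $a_n\to0$ and a constant $c=c(P)$. (ii)~\emph{Fluctuations}: the martingale increments $f_i(X_i)-\bar f_i$ have conditional $(4+\delta)$-th moments bounded by a fixed constant (by \textbf{(A3)}), so Rosenthal's (or Burkholder--Davis--Gundy's) inequality gives $\EE_P[|M_n|^{4+\delta}]=O(n^{(4+\delta)/2})$, hence $\PP_P(M_n/n\le-c')=O(n^{-2-\delta/2})$ for each fixed $c'>0$.

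Finally I would convert these into a bound on $\EE_P[\tau_\alpha]$. Fix $\epsilon\in(0,1)$, set $n_\alpha\coloneqq\lceil\log(1/\alpha)/((1-\epsilon)\gamma^*)\rceil$, and write $\EE_P[\tau_\alpha]=\sum_{n\ge0}\PP_P(\tau_\alpha>n)\le n_\alpha+\sum_{n\ge n_\alpha}\PP_P(\tau_\alpha>n)$. For $n\ge n_\alpha$ we have $\log(1/\alpha)\le(1-\epsilon)\gamma^*n$, so $\{\tau_\alpha>n\}\subseteq\{\log W_n<\log(1/\alpha)\}\subseteq\{D_n/n<(1-\tfrac{\epsilon}{2})\gamma^*\}\cup\{M_n/n<-\tfrac{\epsilon}{2}\gamma^*\}$. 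By~(i), for $n$ past a fixed ($\alpha$-independent) threshold the first set forces $V_\infty\gtrsim n$, and $\sum_n\PP_P(V_\infty\gtrsim n)\le\mathrm{const}\cdot\EE_P[V_\infty]<\infty$; by~(ii) the second has probability $O(n^{-2-\delta/2})$; both tails are summable, so $\sum_{n\ge n_\alpha}\PP_P(\tau_\alpha>n)=O(n_\alpha^{-1})=o(1)$. Thus $\EE_P[\tau_\alpha]\le n_\alpha+o(1)$ (so in particular $\EE_P[\tau_\alpha]<\infty$ for small $\alpha$), whence $\limsup_{\alpha\downarrow0}\EE_P[\tau_\alpha]\gamma^*/\log(1/\alpha)\le1/(1-\epsilon)$; letting $\epsilon\downarrow0$ gives the claim.

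I expect the main obstacle to be exactly this last conversion: the survival probabilities $\PP_P(\tau_\alpha>n)$ must be summable with constants that do not depend on $\alpha$, and it is this --- rather than mere almost-sure growth of $\tfrac1n\log W_n$ --- that forces the $(4+\delta)$-th moment control in \textbf{(A3)} and the summable failure rate in \textbf{(A5)}, and that requires the ``drift deficit from bad ERM rounds'' to be absorbed via a genuine tail bound on $V_\infty$. The remaining points --- measurable selection for the $\argmax$ (handled by the near-ERM relaxation of \Cref{remark:definition-of-erm}) and checking $X_i\perp\calM_{i-1}$ under the enlarged filtration --- are routine.
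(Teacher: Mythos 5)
Your proof is correct and follows essentially the paper's route: decompose $\log W_n$ into a predictable drift $D_n$ and a martingale part $M_n$, lower-bound $D_n$ via the ERM property and \textbf{(A5)}, control $M_n$ via a $(4+\delta)$-th moment inequality (you use Rosenthal/BDG where the paper cites Pinelis, to the same effect), and then sum survival probabilities $\PP_P(\tau_\alpha>n)$. The bookkeeping differs cosmetically: you count the total number $V_\infty$ of drift-bad rounds and use $\EE_P[V_\infty]<\infty$, whereas the paper defines a combined good-event family $G_n = G_{n,1}\cap G_{n,2}$ and the a.s.\ finite index $N^*$ after which all $G_m$ hold, bounding $\EE[N^*]<\infty$; both reduce to the same summability calculation from \textbf{(A3)}--\textbf{(A5)}. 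One small overstatement: your claim $\sum_{n\ge n_\alpha}\PP_P(\tau_\alpha>n)=O(n_\alpha^{-1})$ is not fully justified for the drift contribution --- $\sum_{n\ge n_\alpha}\PP_P(V_\infty\ge c''n)$ is the tail of a convergent series (since $\EE_P[V_\infty]<\infty$), hence $o(1)$ as $n_\alpha\to\infty$, but with no guaranteed polynomial rate; fortunately $o(1)$, or even a uniform $O(1)$ bound, already yields the theorem.
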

This result can be interpreted as establishing the ``first-order optimality'' of the test introduced in~\Cref{def:DV-e-process}. Such results have been established for several composite parametric families, however, our scheme also applies in nonparametric settings as we illustrate in the next subsection. We defer the proof of this result to~Appendix~\ref{proof:general-DV-e-process}.

\begin{remark}
    \label{remark:compactness-of-null} The compactness and convexity assumption~\textbf{(A1)} on the null distribution class allows the interchange of infimum and supremum in the definition of the terms $\dkl(P \parallel \calP_0)$ via the Donsker-Varadhan~(DV) variational representation. It holds automatically in parametric models with compact parameter sets, and also in many interesting nonparametric problems as well. We discuss one such example in~\Cref{subsec:example} of uniformly bounded (from above and below) H\"older balls of densities. 
\end{remark}

\begin{remark}
    \label{remark:alt-class} Assumption \textbf{(A2)} says that the function class $\calF$ is rich enough so that the DV representation of the divergence to the null class is not loosened by restricting to $\calF$. In other words, this holds when the log likelihood-ratio between the distribution $P \in \calP_1$, and its reverse information projection $Q^* \in \calP_0$ can be approximated arbitrarily closely by a sequence of functions in $\calF$. In~\Cref{subsec:example}, we present an example where the reproducing kernel Hilbert space~(RKHS) associated with a universal kernel over a compact domain possesses this uniform approximation property. 
\end{remark}

\begin{remark}
\label{remark:moment-bound} The uniform bounded $(4+\delta)$ moment assumption \textbf{(A3)} is a technical assumption; which in conjunction with a moment inequality~(\Cref{fact:pinelis-1994}) for discrete-time martingales gives us a key concentration result used in the proof of~\Cref{theorem:general-DV-e-process}. Similarly, the uniformly bounded MGF condition under the null stated in~\textbf{(A4)} guarantees that the one-step increments of our $e$-process in~\Cref{def:DV-e-process} is well-defined.  
\end{remark}

\begin{remark}
    \label{remark:learnability} The assumption \textbf{(A5)} is a classical uniform law of large numbers assumption that requires the empirical processes indexed by $\calF$ to concentrate at rate $r_n$ with high probability. This condition is satisfied by many ``regular'' function classes, such as the RKHS balls with bounded kernels that we consider in the next subsection. Importantly this condition is much weaker than the sequential learnability conditions, that are necessary to ensure the existence of prediction schemes with pathwise regret bounds. Our proof of~\Cref{theorem:general-DV-e-process} proceeds without requiring the existence of such pathwise conditions. 
\end{remark}

\begin{remark}
    \label{remark:other-learning-schemes} We have made the specific choice of the ERM strategy in~\Cref{def:DV-e-process} and the corresponding learnability assumptions, such as condition~\textbf{(A5)}, to keep our discussion concrete. With a view towards practical implementations, the statement and proof of~\Cref{theorem:general-DV-e-process} can be generalized to allow for the choice of $f_n$ to be an approximate ERM solution with vanishing error~(as $n \to \infty$), or to other schemes for learning  $f_n$ with appropriate modifications to the assumptions. We leave a more thorough investigation of selecting $f_n$, and the minimal conditions needed for the optimality result like~\Cref{theorem:general-DV-e-process}, to future work. 
\end{remark}

\subsection{A Nonparametric Example}
\label{subsec:example}

\begin{definition}
    \label{def:nonparametric-example}
    Throughout this section, we work with the compact alphabet $\calX = [0,1]^d$ for some $d \geq 1$ endowed with the usual Borel sigma algebra. 
    \begin{itemize}
        \item Let the null class $\calP_0 = \{P_\theta: \theta \in \Theta_0\}$ with $\Theta_0 \equiv \Theta_0(c_0, C_0, L, \beta)$ consists of densities~(with respect to the Lebesgue measure $\mu$) bounded away from $0$ and $\infty$; that is, $\Theta_0 = \{\theta:\calX \to [0, \infty]: |\theta(x)-\theta(x')| \leq L \|x-x'\|_2^\beta, \, c_0 \leq |\theta(x)| \leq C_0, \; \text{and}\; \int \theta d \mu = 1\}$. Here, we have assumed $\beta \in (0, 1]$, $L>0$, $0<c_0<C_0<\infty$. 
    
        \item Instead of choosing a single function class $\calF$, we will work with a countable collection $\process{\calF_j} \equiv \{\calF_j: j \geq 0\}$. Our tests will be based on a mixture of DV-$e$-processes associated with these function classes. In particular, for any $j \geq 0$, we set $\calF_j$ to be  a $2^j$ norm ball of an RKHS $\calH_\kappa$ associated with a universal~\citep{micchelli2006universal} positive-definite kernel $\kappa: \calX \times \calX \to \mathbb{R}$. That is, $\calF_j = \{f \in \calH_\kappa: \|f\|_{\kappa} \leq 2^j\}$. Additionally, we assume that the kernel $\kappa$ is uniformly bounded $\sup_{x, x' \in \calX} \kappa(x, x') \leq 1$. 
    
        \item Finally, we consider a  class of alternatives $\calP_1$ defined as 
        \begin{align}
            \calP_1 = \lbr P \in \calP(\calX) \setminus \calP_0: \exists \theta \in \Theta_0, \text{ with } P \ll P_{\theta}, \; \frac{dP}{dP_{\theta}} \in \calC(\calX, \R), \; \text{and} \; \frac{dP}{dP_{\theta}} > R_0 > 0 \rbr.  \label{eq:calP1-example}
        \end{align}
        Here, $\calC(\calX, \R)$ denotes the space of continuous real-valued functions on (the compact domain) $\calX$. 
    \end{itemize}
    With these terms, for any $j \geq 0$, we can define a process  $\process{W_n^j} = \{W_n^j: n \geq 0\}$ following the approach described in~\Cref{def:DV-e-process} using the function class $\calF_j$. These can be combined using weights $\process{c_j} = \{c_j \in (0,1):  j \geq 0\}$, with $c_j = \tfrac{6}{\pi^2 (j+1)^2}$ to define 
\begin{align}
    \tau_\alpha = \inf \{n \geq 1: W_n \geq 1/\alpha\}, \qtext{with} W_n = \sum_{j \geq 0} c_j W_n^j,\quad \text{for all } n \geq 0. \label{eq:example-test}
\end{align}
\end{definition}

\begin{remark}
    \label{remark:nonparam-exmaple}
    The requirement that for every alternative $P$, there exists a null $\theta$, such that the likelihood ratio $dP/dP_\theta$ is bounded away from $0$ is imposed mainly for technical convenience. It ensures that the corresponding log-likelihood ratio is bounded continuous function on $\calX$, which then simplifies the argument of the main result of this section stated below~(\Cref{prop:example}). The statement of~\Cref{prop:example} would remain valid even without this lower-bound condition. The proof, however,  would then require a more delicate approximation argument, for instance, based on the truncation of the possibly unbounded log-likelihood ratios and an additional limiting step. 
\end{remark}
We can now state the main result of this section, which says that the tests defined in~\eqref{eq:example-test} are optimal  in the limit of small $\alpha$. 
\begin{proposition}
    \label{prop:example} The tests $\process{\tau_\alpha}$ defined in~\eqref{eq:example-test} are optimal~(in the sense of~\Cref{def:first-order-optimality}) for the problem with $(\calP_0, \calP_1)$ as described in~\Cref{def:nonparametric-example}. 
\end{proposition}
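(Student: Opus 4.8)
The plan is to reduce \Cref{prop:example} to the single-class optimality result \Cref{theorem:general-DV-e-process}, applied to each member of the family $\process{\calF_j}$, and then take $j\to\infty$. For the level guarantee, note that by construction (\Cref{def:DV-e-process}) each $\process{W_n^j}$ is an $e$-process for $\calP_0$ --- pointwise dominated under every $Q\in\calP_0$ by a nonnegative martingale started at $1$ --- and since $\sum_{j\ge0}c_j=\tfrac{6}{\pi^2}\sum_{k\ge1}k^{-2}=1$, the mixture $W_n=\sum_j c_j W_n^j$ is again an $e$-process with $W_0=1$. Ville's inequality then gives $\sup_{Q\in\calP_0}\PP_Q(\tau_\alpha<\infty)\le\alpha$ for every $\alpha\in(0,1]$. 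It remains to bound $\EE_P[\tau_\alpha]$ for an arbitrary $P\in\calP_1$; since $P\notin\calP_0$ we automatically have $\gamma^*\equiv\gamma^*(P,\calP_0)>0$, and $\gamma^*<\infty$ by the boundedness argument below, so \Cref{def:first-order-optimality} indeed applies to such $P$.

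\textbf{Approximation: $d_{\calF_j}(P\parallel\calP_0)\uparrow\gamma^*$.} Fix $P\in\calP_1$ and let $Q^*=P_{\theta_0^\dagger}\in\calP_0$ be the reverse information projection attaining $\gamma^*=\dkl(P\parallel Q^*)$ (well defined by weak compactness of $\calP_0$, verified below). By the DV representation (\Cref{fact:donsker-varadhan}) and the minimax identity \eqref{eq:variational-divergence-to-null} (Sion's theorem, \Cref{fact:Sions-Minimax-Theorem}), the pair $(f^*,Q^*)$ with $f^*=\log\frac{dP}{dQ^*}$ is a saddle point: $\psi_0(f^*)=0$ and $\EE_P[f^*]=\gamma^*$. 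I will argue that $f^*$ is a bounded continuous function on $\calX$: picking $\theta\in\Theta_0$ as in \eqref{eq:calP1-example} one has $dP/dQ^*=(dP/dP_\theta)\cdot(\theta/\theta_0^\dagger)$, and the first factor is continuous with $dP/dP_\theta>R_0>0$ while $\theta,\theta_0^\dagger$ are $L$-Hölder densities valued in $[c_0,C_0]$, so $dP/dQ^*$ is continuous and bounded above and below by positive constants on the compact set $\calX$ (this is the sole role of the lower bound $R_0$; cf.\ \Cref{remark:nonparam-exmaple}). Since $\kappa$ is universal, $\calH_\kappa$ is $\unorm$-dense in $\calC(\calX,\R)$, so there are $g_m\in\calH_\kappa$ with $\|g_m-f^*\|_\infty\to0$, and each $g_m$ lies in $\calF_{j(m)}$ with $j(m)=\lceil\log_2\|g_m\|_\kappa\rceil$. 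Because $|\EE_P[g]-\EE_P[f^*]|\le\|g-f^*\|_\infty$ and $|\psi_0(g)-\psi_0(f^*)|\le\|g-f^*\|_\infty$ (the bound $\|g-f^*\|_\infty\le\varepsilon$ forces $e^{-\varepsilon}\le e^{g-f^*}\le e^{\varepsilon}$ $Q$-a.s., hence $|\log\EE_Q[e^g]-\log\EE_Q[e^{f^*}]|\le\varepsilon$ for every $Q\in\calP_0$), we obtain $d_{\calF_{j(m)}}(P\parallel\calP_0)\ge\EE_P[g_m]-\psi_0(g_m)\to\gamma^*$. Combined with $d_{\calF_j}(P\parallel\calP_0)\le\gamma^*$ and monotonicity in $j$ (the $\calF_j$ are nested), this yields $d_{\calF_j}(P\parallel\calP_0)\uparrow\gamma^*$.

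\textbf{Checking \Cref{assump:general} for $\calF_j$ and concluding.} Fix $j$. The reproducing property together with $\sup_{x\in\calX}\kappa(x,x)\le1$ gives $\|f\|_\infty\le2^j$ for all $f\in\calF_j$, which yields \textbf{(A3)} (with $\delta=1$, $B_P=2^{5j}$) and \textbf{(A4)} (with $B_0=2^j$) at once. \textbf{(A1)}: $\Theta_0$ is uniformly bounded and uniformly $L$-Hölder, hence relatively compact in $\calC(\calX,\R)$ by Arzelà--Ascoli, and closed; uniform convergence of densities valued in $[c_0,C_0]$ implies total-variation convergence of the associated laws, so $\calP_0$ is weakly compact, and it is convex since convex combinations of $L$-Hölder densities in $[c_0,C_0]$ remain of this form. \textbf{(A5)}: a symmetrization/bounded-differences argument bounds $\sup_{f\in\calF_j}|\tfrac1n\sum_i f(X_i)-\EE_P f|$ by $C\,2^j/\sqrt n + t$ with probability at least $1-e^{-cnt^2/2^{2j}}$, so with $r_n=C_j\sqrt{(\log n)/n}\to0$ and $C_j$ large enough the failure probability is at most $n^{-7/3}$. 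Only \textbf{(A2)} fails for an individual $\calF_j$; however, inspection of the proof of \Cref{theorem:general-DV-e-process} shows that \textbf{(A2)} is invoked only at the very last step, to replace the realized growth rate of $\process{W_n^j}$ --- namely $d_{\calF_j}(P\parallel\calP_0)$ --- by $\gamma^*$. Thus the argument there in fact delivers $\limsup_{\beta\downarrow0}\EE_P[\sigma_\beta^j]\big/\big(\log(1/\beta)/d_{\calF_j}(P\parallel\calP_0)\big)\le1$ for $\sigma_\beta^j\coloneqq\inf\{n:W_n^j\ge1/\beta\}$. Since $W_n\ge c_j W_n^j$ we have $\tau_\alpha\le\sigma_{\alpha c_j}^j$, and because $\log(1/(\alpha c_j))/\log(1/\alpha)\to1$ as $\alpha\downarrow0$ for fixed $j$,
\begin{align*}
\limsup_{\alpha\downarrow0}\frac{\EE_P[\tau_\alpha]}{\log(1/\alpha)/\gamma^*}\;\le\;\limsup_{\alpha\downarrow0}\frac{\EE_P[\sigma_{\alpha c_j}^j]}{\log(1/\alpha)/\gamma^*}\;=\;\frac{\gamma^*}{d_{\calF_j}(P\parallel\calP_0)}.
\end{align*}
Letting $j\to\infty$ and using the approximation step gives $\limsup_{\alpha\downarrow0}\EE_P[\tau_\alpha]/(\log(1/\alpha)/\gamma^*)\le1$, i.e.\ first-order optimality.

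\textbf{Main obstacle.} The delicate points are (i) isolating the single place where \textbf{(A2)} is used in the proof of \Cref{theorem:general-DV-e-process}, so that the $d_{\calF_j}$-rate version can be quoted cleanly --- in particular checking that the moment-based conversion from an a.s.\ growth rate to a bound on $\EE_P[\sigma_\beta^j]$ there does not secretly rely on \textbf{(A2)}; and (ii) the approximation lemma, specifically the verification that the oracle witness $f^*=\log(dP/dQ^*)$ is bounded and continuous and that the approximation error transfers through the nonlinear functional $\psi_0$.
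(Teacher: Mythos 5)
Your proof is correct and follows essentially the same strategy as the paper: verify weak compactness via Arzelà–Ascoli, use universality of the kernel to approximate the bounded-continuous oracle witness $f^*=\log(dP/dP_{\theta^*})$ by elements of a single ball $\calF_j$ with a quantifiable sacrifice in the growth rate, check \textbf{(A3)}–\textbf{(A5)} for that ball, dominate the mixture by $c_j W_n^j$, apply \Cref{theorem:general-DV-e-process} at a slightly reduced rate, and pass to the limit. The only cosmetic difference is parametrization (you index by $j\to\infty$ where the paper indexes by approximation tolerance $\epsilon\to 0$), and you make fully explicit something the paper states more briefly as ``an approximate version of \textbf{(A2)},'' namely that the proof of \Cref{theorem:general-DV-e-process} really only needs $d_{\calF_j}(P\parallel\calP_0)$ in place of $\gamma^*$ --- your reading of where \textbf{(A2)} enters is accurate.
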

The proof of this result is in~\Cref{proof:example}. 
As the proof of~\Cref{prop:example} illustrates, the particular choice of a H\"older ball for the null density class is mainly a convenient way to obtain the structural properties required by our general theory. In particular, the uniform H\"older condition, together with the uniform bounds $0< c_0 \leq \theta \leq C_0$ implies that the family $\Theta_0$ of null densities is uniformly bounded and equicontinuous on the compact domain $\calX = [0, 1]^d$. By the Arzel\'a-Ascoli theorem~(see~\Cref{fact:arzela-ascoli} in~\Cref{apendix:background}), $\Theta_0$ is therefore a compact subset of $\big(\calC(\calX, \R), \unorm\big)$  and a simple continuity argument then shows that $\calP_0 = \{P_{\theta}: \theta \in \Theta_0\}$ is convex and weakly compact. This verifies the condition~\textbf{(A1)} in~\Cref{assump:general}, which is needed to apply the DV variational representation and Sion's minimax theorem in the proof of~\Cref{theorem:general-DV-e-process}. 

On the alternative side, the assumption that each $P \in \calP_1$ is absolutely continuous with respect to some $P_\theta \in \calP_0$, with a bounded continuous likelihood ratio ensures that the optimal DV witness function exists and is a bounded continuous function. The universality of the kernel $\kappa$ then implies that the RKHS $\calH_{\kappa}$ is dense~(in $\sup$ norm) in the space of continuous  functions on $\calX$, which leads to a justification of \textbf{(A2)} in~\Cref{assump:general}. The remaining three conditions of~\Cref{assump:general}, namely \textbf{(A3), (A4)} and \textbf{(A5)} are straightforward consequences of the boundedness and learnability properties of RKHS balls. In this sense, the specific H\"older structure of the null is not essential for our result, and the fundamental requirements are (i) compactness in the appropriate topology of the null class, and (ii) the existence of a bounded continuous optimal log-likelihood ratio that can be well approximated by $\calF$.

\subsection{Practical Aspects}
\label{subsec:computational-aspects}
Our proposed construction of $W_n$ in~\Cref{def:DV-e-process} relies on the ability to solve two optimization problems in each round: 
\begin{itemize}
    \item \sloppy Finding the test function $f_n$ via the empirical risk minimization optimization task: $f_n \in \argmax_{f \in \calF}   \frac 1{n-1} \sum_{i=1}^{n-1} f(X_i) - \psi_0(f)$. 
    \item Evaluating (or upper-bounding) the quantity $\psi_0(f_n) = \sup_{\theta \in \Theta_0} \log \EE_{\theta}[e^{f_n(X)}]$, used for defining the process $\process{W_n}$ in~\Cref{def:DV-e-process}. 
\end{itemize}
In many problem instances, these operations can be computationally prohibitive. In this section, we briefly discuss this issue in two regimes. The first~(\S~\ref{subsubsec:finite-dim}) is a finite-dimensional setting where we may employ existing stochastic saddle-point methods to construct valid test supermartingales, and the second~(\S~\ref{subsubsec:heuristic}) is an infinite dimensional setting where we discuss heuristic approaches based on large-scale generative models. 

\subsubsection{When Both $\Theta_0$ and $\calF$ are Finite Dimensional}
\label{subsubsec:finite-dim}

We first consider the special case in which both the null  and the ``test function class''  admit finite dimensional and convex parameterizations, that is, $\calP_0  = \{P_\theta: \theta \in \Theta_0\}$ and $\calF  = \{f_\varphi: \varphi \in \Phi\}$ for some compact convex sets $\Theta_0 \subset \R^{d_0}$ and $\Phi \subset \R^{d_1}$. The definition of $f_n$ then becomes 
\begin{align}
    f_n \in \argmax_{\varphi \in \Phi} \min_{\theta \in \Theta_0} \calJ_n(\varphi, \theta) \coloneqq \frac{1}{n-1} \sum_{i=1}^{n-1} f_\varphi(X_i) - \log \EE_{\theta}[e^{f_\varphi(X)}]. \label{eq:saddle-point-problem}
\end{align}
In practice, this task can be performed using a stochastic first-order ascent-descent scheme from the stochastic saddle-point literature, such as~\cite[\S~3]{nemirovski2009robust}. Rather than studying a specific algorithm instantiation and its complexity analysis, we formalize the behavior of such algorithms through an \emph{approximate optimization condition}: for some deterministic tolerance level $s_n \downarrow 0$, we assume that for all $n \geq 2$, we can construct an $\calM_{n-1}$-measurable test function $f_n = f_{\hat{\varphi}_n}$ with $\hat{\varphi}_n \in \Phi$, such that we have 
\begin{align}
    \frac{1}{n-1} \sum_{i=1}^{n-1} f_n(X_i)  - \psi_0(f_n)  = \inf_{\theta\in \Theta_0} \calJ_n(\hat{\varphi}_n, \theta)\geq \sup_{\varphi \in \Phi} \inf_{\theta \in \Theta_0} \calJ_n(\varphi, \theta) - s_n, \quad \text{w.p. } \geq 1 - \epsilon_n, \label{eq:approx-test-func}
\end{align}
for some appropriately selected confidence level $\epsilon_n \in (0, 1)$.  
Here $s_n$ may capture the upper bound on the duality gap ensured by the algorithm, and any other Monte Carlo approximations used in the implementation. Having obtained an $f_n$ by the saddle-point solver~(we can set $f_1$ to be an arbitrary element of $\calF$ for $n=1$), the next task is to get an upper bound on $\psi_0(f_n)$. Assuming sample-access to the null model class, we may estimate the map $\theta \mapsto \log E_{\theta}[e^{f_n(X)}]$ with sufficient accuracy, and hence employ off-the-shelf convex solvers to obtain a random quantity $\hat{\psi}_0(f_n)$ satisfying the high probability bound 
\begin{align}
    \PP\lp \hat{\psi}_0(f_n) \geq \psi_0(\hat{f}_n) \mid \calM_{n-1} \rp \geq 1- \eta_n, \label{eq:psi-hat}
\end{align}
for some small $\eta_n \in (0, 1)$, and $n \geq 1$. 
Now,  plugging this $(f_n, \hat{\psi}_0(f_n))$ pair in~\Cref{def:DV-e-process} directly will not give us a valid $e$-process for $\calP_0$ since the required condition may be violated in the $\epsilon_n$-probability ``bad'' event. To address that, we will need to introduce an appropriate correction. We state one such formulation below under a boundedness assumption. 
\begin{proposition}
    \label{prop:finite-dimensional} Suppose there exist finite known constants $0<B_n<\infty$ for all $n \geq 1$, such that 
    \begin{align}
        0 \leq \exp \lp f_n(X_n) - \hat{\psi}_0(f_n) \rp \leq B_n \quad \text{almost surely}, 
    \end{align}
    where $(f_n, \hat{\psi}_0(f_n))$ were introduced in~\eqref{eq:approx-test-func} and~\eqref{eq:psi-hat}. Then, introduce the process 
    \begin{align}
        \Wtilde_0 = 1, \quad \Wtilde_n = \Wtilde_{n-1} \times \Etilde_n \text{ for } n\geq 1, \qtext{where} \Etilde_n = \frac{\exp\big(f_n(X_n) - \hat{\psi}_0(f_n)\big)}{1 + \eta_nB_n}. 
    \end{align}
    Then the process $\process{\Wtilde_n} \coloneqq \{\Wtilde_n: n \geq 0\}$ is a test supermartingale under every $Q \in \calP_0$. Consequently, $\tau_\alpha = \inf \{n \geq 1: \Wtilde_n \geq 1/\alpha\}$ is a level-$\alpha$ test for $\calP_0$ for all $\alpha \in (0, 1)$. 
\end{proposition}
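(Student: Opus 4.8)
The plan is to verify directly that $\process{\Wtilde_n}$ is a nonnegative supermartingale with $\Wtilde_0 = 1$ under an arbitrary fixed $Q \in \calP_0$, and then to read off the level-$\alpha$ guarantee for $\tau_\alpha$ from Ville's maximal inequality for nonnegative supermartingales. Nonnegativity and $\Wtilde_0 = 1$ are immediate from the form of $\Etilde_n$, and integrability follows from the hypothesis $0 \le \exp(f_n(X_n) - \hat\psi_0(f_n)) \le B_n$, which gives $0 \le \Etilde_n \le B_n/(1+\eta_nB_n)$ and hence $\Wtilde_n \le \prod_{k \le n} B_k/(1+\eta_kB_k) < \infty$ for each $n$. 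The entire content of the claim is therefore the one-step inequality $\EE_Q[\Etilde_n \mid \calM_{n-1}] \le 1$; since $\Wtilde_{n-1}$ is $\calM_{n-1}$-measurable and nonnegative, this reduces to showing
\begin{align}
    \EE_Q\lb \exp\lp f_n(X_n) - \hat\psi_0(f_n)\rp \mid \calM_{n-1} \rb \le 1 + \eta_n B_n.
\end{align}
Throughout I use that $\process{\calM_n}$ is the observation filtration enriched with the auxiliary randomness used through time $n$, so that $f_n$ and $\psi_0(f_n) = \sup_{\theta\in\Theta_0}\log\EE_\theta[e^{f_n(X)}]$ are $\calM_{n-1}$-measurable, while the new observation $X_n \sim Q$ and the Monte-Carlo randomness feeding $\hat\psi_0(f_n)$ are both independent of $\calM_{n-1}$ (this is exactly why \eqref{eq:psi-hat} is phrased conditionally on $\calM_{n-1}$ rather than pathwise).

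The key step is to split the conditional expectation according to the ``good'' event $A \coloneqq \{\hat\psi_0(f_n) \ge \psi_0(f_n)\}$, which by \eqref{eq:psi-hat} satisfies $\PP_Q(A^c \mid \calM_{n-1}) \le \eta_n$. On $A$ one has $\exp(f_n(X_n) - \hat\psi_0(f_n)) \le \exp(f_n(X_n) - \psi_0(f_n))$, so dropping the indicator and integrating out $X_n$ (which is independent of $\calM_{n-1}$) gives
\begin{align}
    \EE_Q\lb \exp\lp f_n(X_n) - \hat\psi_0(f_n)\rp \boldsymbol{1}_A \mid \calM_{n-1} \rb \le e^{-\psi_0(f_n)}\, \EE_Q\lb e^{f_n(X)}\rb \le 1,
\end{align}
where the last inequality uses $Q \in \calP_0$, hence $\log \EE_Q[e^{f_n(X)}] \le \psi_0(f_n)$. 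On the complement $A^c$ I use only the almost-sure upper bound $B_n$:
\begin{align}
    \EE_Q\lb \exp\lp f_n(X_n) - \hat\psi_0(f_n)\rp \boldsymbol{1}_{A^c} \mid \calM_{n-1} \rb \le B_n\, \PP_Q(A^c \mid \calM_{n-1}) \le \eta_n B_n.
\end{align}
Adding the two contributions yields the displayed one-step inequality, hence $\EE_Q[\Etilde_n \mid \calM_{n-1}] \le 1$ and the supermartingale property. Ville's inequality then gives $\PP_Q(\tau_\alpha < \infty) = \PP_Q(\sup_{n\ge 0} \Wtilde_n \ge 1/\alpha) \le \alpha \Wtilde_0 = \alpha$ for every $Q \in \calP_0$, which is the asserted level-$\alpha$ property.

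The main obstacle — or rather the only subtle point — is that $\hat\psi_0(f_n)$ carries fresh Monte-Carlo randomness and is therefore \emph{not} $\calM_{n-1}$-measurable, so the event $A^c$ on which the estimate under-shoots $\psi_0(f_n)$ cannot be excluded pathwise, only controlled in conditional probability. This is precisely what necessitates the multiplicative correction $1+\eta_nB_n$: without it, plugging $(f_n,\hat\psi_0(f_n))$ into \Cref{def:DV-e-process} would leave a conditional bias of size up to $\eta_n(B_n-1)$, which can be strictly positive. The two ingredients that make the correction work are (i) the boundedness hypothesis, which converts the probability $\eta_n$ of the bad event into the controllable additive slack $\eta_nB_n$ that the denominator absorbs, and (ii) the one-sided guarantee $\hat\psi_0(f_n)\ge\psi_0(f_n)$ on the good event, which lets us replace the non-predictable $\hat\psi_0(f_n)$ by the $\calM_{n-1}$-measurable quantity $\psi_0(f_n)$ and then exploit that $e^{f_n(X)-\psi_0(f_n)}$ has $Q$-mean at most one for every $Q\in\calP_0$. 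Note that type-I error control uses nothing about how $f_n$ is produced: the approximate-saddle-point property \eqref{eq:approx-test-func} plays no role here; we only need $f_n$ predictable and $\hat\psi_0(f_n)$ to over-estimate $\psi_0(f_n)$ with conditional probability at least $1-\eta_n$.
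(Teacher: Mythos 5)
Your proof is correct and follows the same route as the paper's proof: decompose the one-step conditional expectation according to the good event $G_n=\{\hat{\psi}_0(f_n)\ge\psi_0(f_n)\}$, bound the contribution on $G_n$ by $1$ using the predictability of $f_n$ and $Q\in\calP_0$, bound the contribution on $G_n^c$ by $B_n\eta_n$ via the boundedness hypothesis together with \eqref{eq:psi-hat}, and finish with Ville's inequality. Your filtration remark is a worthwhile sharpening of the paper's bookkeeping: the paper's proof says $\calM_{n-1}$ includes the randomness used to compute $\hat{\psi}_0(f_n)$, which would make $\hat{\psi}_0(f_n)$ predictable and turn the conditional probability in \eqref{eq:psi-hat} into an indicator (forcing $G_n^c$ to be conditionally null whenever $\eta_n<1$, and rendering the correction $1+\eta_nB_n$ superfluous), whereas your convention --- the optimization randomness for $f_n$ sits in $\calM_{n-1}$ but the Monte-Carlo randomness behind $\hat{\psi}_0(f_n)$ is fresh --- is the one under which the hypothesis \eqref{eq:psi-hat} and the correction factor both carry genuine content.
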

\begin{proof}
    Fix a null distribution $P_\theta \in \calP_0$~(that is, $\theta \in \Theta_0$), and an $n \geq 1$, and let   $\calM_{n-1}$ also include any additional randomness used in the optimization algorithms for finding $(f_n, \hat{\psi}_0(f_n))$. Then, if $G_n$ denotes the ``good event'' $\{\hat{\psi}_0(f_n) \geq \psi_0(f_n)\}$, we know that $\PP_{\theta}(G_n^c \mid \calM_{n-1}) \leq \eta_n$. Then, observe that 
    \begin{align}
         \EE_\theta\lb e^{f_n(X_n) - \hat{\psi}_0(f_n)}  \boldsymbol{1}_{G_n}\mid \calM_{n-1}\rb  &\leq \EE_\theta\lb e^{f_n(X_n) - {\psi}_0(f_n)}  \boldsymbol{1}_{G_n}\mid \calM_{n-1}\rb && (\text{Definition of $G_n$})\\
         & \leq e^{-\psi_0(f_n)} \EE_\theta[e^{f_n(X_n)} \mid \calM_{n-1}] && (f_n \text{ is } \calM_{n-1}\text{-measurable})\\
         &  \leq 1. && (\text{Definition of $\psi_0(f_n)$})
    \end{align}
    Under the bad event, we use the boundedness assumption of~\Cref{prop:finite-dimensional} to get 
    \begin{align}
        \EE_\theta\lb e^{f_n(X_n) - \hat{\psi}_0(f_n)}  \boldsymbol{1}_{G_n^c}\mid \calM_{n-1}\rb  & \leq B_n \PP_\theta(G_n^c \mid \calM_{n-1}) \leq B_n \eta_n. 
    \end{align}
    Combining the previous two displays, we get 
    \begin{align}
        \EE_\theta \lb e^{f_n(X_n) - \hat{\psi}_0(f_n)} \mid \calM_{n-1} \rb \leq 1 + B_n \eta_n,  \quad \implies \quad \EE_\theta [\Etilde_n \mid \calM_{n-1}] \leq 1, 
    \end{align}
    as required for any $\theta \in \Theta_0$. 
\end{proof}

\begin{remark}
    \label{remark:finite-dimension-1} While~\Cref{prop:finite-dimensional} establishes the validity of the test based on $\process{\Wtilde_n}$, for establishing the optimality, in addition to \Cref{assump:general} a sufficient condition is that  $s_n \to 0$, $\sum_{n \geq 1} \epsilon_n < \infty$, and $\sum_{n =1}^{\infty} \eta_n B_n < \infty$. The summability of $\epsilon_n$ ensures that our procedure uses ``bad'' $f_n$ only finitely often, while $s_n \to 0$  in addition to the summability of $B_n \eta_n$ ensures that the long term drift of the process $\process{\log \Wtilde_n}$ is equal to $\gamma^*$. In particular, $\sum_{n \geq 1} \eta_n B_n < \infty $ implies that $\sum_{n \geq 1} \log (1 + \eta_n B_n) < \infty$, and so the additional correction terms in the definition of $\Wtilde_n$ only contribute an $\calO(1)$ term to the rejection (log-) boundary. 
\end{remark}

\subsubsection{Heuristic for Larger Problems}
\label{subsubsec:heuristic}

To tackle the more practically relevant case when neither $\calP_0$ nor $\calF$ admit finite-dimensional parametrization, we may appeal to the recent advances in generative modeling in machine learning, such as diffusion models, invertible neural networks, or generative adversarial networks~(GANs). In fact, the variational structure of the problem~\eqref{eq:saddle-point-problem} closely resembles the training objective used in $f$-GANs.

A concrete practical strategy could be to replace the (possibly infinite dimensional) null class $\calP_0$ with a  rich generative family $\{Q_\eta: \eta \in \calN \}$. Here, each $Q_\eta$ is implemented as a generator that maps some base distribution~(like $Z \sim N(0, I_{d_Z})$ for some $d_Z \geq 1$) into synthetic samples that approximate draws from a null distribution. Similarly, the test-function class $\calF$ may be approximated by another family of neural networks $\{f_\varphi: \varphi \in \Phi\}$ with an appropriate architecture. This leads to the saddle-point problem in which the test-function network is trained to separate the observed data from the null samples, and the null generative-model $\eta$ is trained to generate samples that are hard for the test-function network to distinguish. This exactly mirrors the GAN training pipeline, and we may use alternative stochastic gradient ascent-descent algorithms using appropriate mini-batches of data. 
After sufficiently many iterations of this training, the learned critic function $f_{\hat{\phi}_n}$ and the null generator $Q_{\hat{\eta}_n}$ provide an approximate DV optimizer pair, and we can define a heuristic  $e$-process by setting $f_n = f_{\hat{\phi}_n}$, and obtain $\hat{\psi}_0(f_n)$  via a Monte Carlo approximation using the generator $Q_{\hat{\eta}_n}$. 

Due to the nonconvexity/nonconcavity of the objective, the above approach does not yield  a critic function  $f_n = f_{\hat{\phi}_n}$ satisfying the conditions required stated in~\Cref{assump:general}, nor is $\hat{\psi}_0(f_n)$ defined above guaranteed to be an upper bound on $\psi_0(f_n)$. Nevertheless, this GAN-style construction is a natural and flexible heuristic to implement our general ideas to practical and high-dimensional problems, and we believe that exploring such implementations empirically as a promising direction for future work.

\section{Conclusion}
\label{sec:conclusion}
In this paper, we considered the problem of constructing optimal level-$\alpha$ power-one tests for weakly compact and composite null hypotheses. Prior work has established a fundamental lower bound in the limit of small $\alpha$ of the expected value of any such test in terms of the minimum relative entropy to the null~(or $\mathrm{KL}_{\inf})$. That is, for any collection of tests $\process{\tau_\alpha} = \{\tau_\alpha: \alpha \in (0, 1)\}$ for a given  null class $\calP_0$, and any $P \not \in \calP_0$, we must have  the following:
\begin{align}
\liminf_{\alpha \downarrow 0} \frac{\EE_P[\tau_\alpha]    }{J(P, \calP_0, \alpha)} \geq 1, \qtext{where} J(P, \calP_0, \alpha) = \frac{\log(1/\alpha)}{\inf_{Q \in \calP_0} \dkl(P \parallel Q)}. 
\end{align}
\sloppy In this paper, we established the existence of tests that are optimal in the sense that they satisfy $\limsup_{\alpha \downarrow 0} \EE_P[\tau_\alpha]/J(P, \calP_0, \alpha)\leq 1$. Towards this goal, we first considered the case of distributions supported on finite alphabets, and proved that a sequential test based on a class of $e$-processes proposed by~\citet{wasserman2020universal} are optimal in the above-described sense. A crucial step in proving this result was a saddle-point representation of $\gamma^*(P, \calP_0)$, and an interchange of the order of $\inf$ and $\sup$ to obtain an ``oracle $e$-process''. Using this as a starting point, we then considered a more general class of problems for which we can incrementally approximate the oracle $e$-process in a data-driven manner, and establish the optimality of the associated power-one test. To illustrate the power of this result, we discussed a detailed example of a nonparametric null parametrized by a H\"older ball of densities, and an alternative class for which the likelihood ratio is a continuous bounded function that can be approximated by elements of a reproducing kernel Hilbert space. Finally, we also discussed some computational aspects associated with implementing our proposed saddle-point based tests. 

The results of this paper present several interesting directions for future work. Building on the discussion of~\Cref{subsec:computational-aspects}, an immediate direction for investigation is to employ iterative algorithms from the literature on stochastic average approximation algorithms to design provably optimal tests for finite dimensional settings. On a more practical note, exploring how modern generative models may be used to construct tests for large-scale problems, such as by incorporating constraints associated with null classes is an important topic. Finally, extending our ideas beyond the \iid case is another important direction.

\bibliographystyle{abbrvnat}
\bibliography{ref}

\begin{appendix}
\section{Additional Background}
\label{apendix:background}
In this section, we recall some key facts that serve as the foundations of the theoretical results presented in this paper. We begin by recalling a classical variational representation of relative entropy derived by~\citet{donskervaradhan1983asymptotic}. We state a version of this result following the notation of~\citet{polyanskiy2025information}. 
\begin{fact}[Donsker-Varadhan~(DV) Representation]
    \label{fact:donsker-varadhan} Let $P, Q$ denote two probability measures on some measurable space $(\calX, \calB_\calX)$, such that $P \ll Q$. Then, the relative entropy between $P$ and $Q$ is defined as $\EE_P\lb \log\tfrac{dP}{dQ(X)} \rb$, and satisfies the following variational representation: 
    \begin{align}
        \dkl(P \parallel Q) = \sup_{f \in \calC_Q} \EE_P[f(X)] - \log \EE_Q[e^{f(X)}], 
    \end{align}
    where $\calC_Q = \{f: \calX \to \R: 0< \EE_Q[e^{f(X)}] < \infty\}$. Furthermore,  if $P \ll Q$ and $\dkl(P \parallel Q) < \infty$,  the supremum in the expression above is attained at any $f^* \equiv f^*(P, Q)$ of the form $f^*(x) = \log \tfrac{dP}{dQ}(x) + c$, for all $x \in \calX$ and any constant $c \in \mathbb{R}$.   If $(\calX, \calB)$ is a metric space with the Borel sigma-algebra, then we can replace $\calC_Q$ with $\calC_b$, the class of all bounded continuous functions on $\calX$. 
\end{fact}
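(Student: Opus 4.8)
The plan is to prove the variational identity by two matching inequalities and then upgrade to bounded continuous test functions by an approximation argument. For the inequality $\sup_{f\in\calC_Q}\bigl(\EE_P[f]-\log\EE_Q[e^{f}]\bigr)\le \dkl(P\parallel Q)$, the key device is the exponential tilt: given $f\in\calC_Q$, define the probability measure $Q_f$ by $\tfrac{dQ_f}{dQ}=e^{f}/\EE_Q[e^{f}]$; it is equivalent to $Q$ (since $e^{f}>0$) and hence dominates $P$. Writing $\tfrac{dP}{dQ_f}=\tfrac{dP}{dQ}\cdot\tfrac{dQ}{dQ_f}$, taking logarithms, and integrating against $P$ yields the identity $\dkl(P\parallel Q_f)=\dkl(P\parallel Q)-\EE_P[f]+\log\EE_Q[e^{f}]$, valid whenever $\EE_P[f]$ is well-defined (and the objective is $-\infty$ by convention otherwise); non-negativity of relative entropy then gives the bound for every $f\in\calC_Q$.

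For the reverse inequality and the attainment claim when $\dkl(P\parallel Q)<\infty$, I would plug in $f^\star=\log\tfrac{dP}{dQ}$: one checks $\EE_Q[e^{f^\star}]=\EE_Q[\tfrac{dP}{dQ}]=1\in(0,\infty)$, so $f^\star\in\calC_Q$, and — using that the negative part of $\log\tfrac{dP}{dQ}$ is always $P$-integrable and the positive part is $P$-integrable precisely when $\dkl(P\parallel Q)<\infty$ — that $\EE_P[f^\star]=\dkl(P\parallel Q)$, whence the objective equals $\dkl(P\parallel Q)$; adding a constant leaves it unchanged, giving the stated family of maximizers $f^\star+c$. When $\dkl(P\parallel Q)=\infty$, I would instead take $g_n:=\log\tfrac{dP}{dQ}$ on $\{\log\tfrac{dP}{dQ}\le n\}$ and $g_n:=0$ elsewhere, so that $\EE_Q[e^{g_n}]\le P(\calX)+Q(\calX)=2$ (hence $g_n\in\calC_Q$) while $\EE_P[g_n]\to\infty$ by monotone convergence applied separately to the positive and negative parts of $\log\tfrac{dP}{dQ}$; thus the supremum is $+\infty$, matching $\dkl(P\parallel Q)$.

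For the metric-space refinement, note $\calC_b\subseteq\calC_Q$, so the supremum over $\calC_b$ is automatically at most $\dkl(P\parallel Q)$. For the reverse, the truncation step above reduces matters to approximating the bounded measurable function $f^\star_M:=(f^\star\vee(-M))\wedge M$ (and, in the infinite case, the functions $g_n$) by bounded continuous functions; Lusin's theorem together with the Tietze extension theorem produces approximants that are uniformly bounded by $M$ and converge $(P+Q)$-almost everywhere, and dominated convergence then shows that $h\mapsto\EE_P[h]-\log\EE_Q[e^{h}]$ is continuous along such a sequence. Sending $M\to\infty$ — with $\EE_Q[e^{f^\star_M}]\to 1$ and $\EE_P[f^\star_M]\to\dkl(P\parallel Q)$ by dominated convergence — then recovers the full value, establishing equality over $\calC_b$ as well.

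I expect the conceptual heart — the tilting identity plus Gibbs' inequality — to be short. The main obstacle is the measure-theoretic bookkeeping: ensuring $\EE_P[f]$ is well-defined throughout, handling the case $\dkl(P\parallel Q)=\infty$ via truncation, and, in the metric setting, securing continuous approximants that are simultaneously uniformly bounded and convergent $(P+Q)$-almost everywhere, so that dominated convergence can be applied to $\EE_P[\cdot]$ and to $\EE_Q[e^{\cdot}]$ at once.
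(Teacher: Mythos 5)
The paper does not prove Fact~\ref{fact:donsker-varadhan}; it is recalled as a known result with citations to \citet{donskervaradhan1983asymptotic} and \citet{polyanskiy2025information}, so there is no in-paper proof to compare against. Your argument is the standard textbook derivation (exponential tilting plus Gibbs' inequality for the upper bound, direct substitution of $f^* = \log\tfrac{dP}{dQ}$ for attainment, truncation for the infinite-divergence case, Lusin--Tietze approximation for the $\calC_b$ refinement) and is sound in outline.

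Two small points of care. First, $f^* = \log\tfrac{dP}{dQ}$ takes the value $-\infty$ on $\{dP/dQ = 0\}$, a set that is $P$-null but may have positive $Q$-measure, so strictly $f^*\notin\calC_Q$ as written ($\calC_Q$ consists of $\R$-valued functions); one should either allow $[-\infty,\infty)$-valued test functions, or redefine $f^*$ to be any finite value on that set, which leaves $\EE_P[f^*]$ unchanged and shifts $\EE_Q[e^{f^*}]$ only by the finite amount $Q(\{dP/dQ=0\})$ (hence still in $(0,\infty)$), at the price of not attaining the supremum exactly but only approaching it. Second, in the infinite-divergence case your $g_n$ is truncated from above only, so $g_n$ still takes the value $-\infty$ where $dP/dQ=0$ and hence is not $\R$-valued; a symmetric two-sided truncation $g_n := \bigl(\log\tfrac{dP}{dQ}\bigr)\wedge n\vee(-n)$ keeps $g_n$ bounded (so $g_n\in\calC_Q$ trivially), still gives $\EE_Q[e^{g_n}]\le 2$ by domination with $\max(dP/dQ,1)$, and your monotone-convergence argument for the positive and negative parts goes through cleanly. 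With those fixes, and using Borel--Cantelli (with $\epsilon_k = 2^{-k}$ in Lusin) to get $(P+Q)$-a.e.\ convergence of the continuous approximants, the argument is complete; note also that Lusin's theorem on a metric space only requires the finite Borel measure $P+Q$ to be inner regular with respect to closed sets, which holds automatically and does not need tightness or separability.
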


Next, we recall a general minimax theorem first obtained by~\citet[Corollary  3.3]{sion1958general}, whose proof has subsequently been simplified, for example by~\citet{komiya1988elementary}. 
\begin{fact}[Sion's Minimax Theorem]
    \label{fact:Sions-Minimax-Theorem} 
    Let $\mathbb{X}$ be a compact convex subset of a topological vector space $\calX$, and $\mathbb{Y}$ be a convex subset of a topological vector space $\calY$. Let $\Phi:\mathbb{X} \times \mathbb{Y} \to \mathbb{R}$ be a function such that 
    \begin{itemize}
        \item For every $x \in \mathbb{X}$, the map $y \mapsto \Phi(x, y)$ is quasi-convex and lower semicontinuous on $\mathbb{Y}$. 
        \item For every $y \in \mathbb{Y}$, the map $x \mapsto \Phi(x, y)$ is quasi-concave and upper semicontinuous on $\mathbb{X}$. 
    \end{itemize}
    Then, we have the following: 
    \begin{align}
        \inf_{x \in \mathbb{X}}\;\sup_{y \in \mathbb{Y}} \Phi(x, y) \; = \; 
        \sup_{y \in \mathbb{Y}} \; \inf_{x \in \mathbb{X}} \Phi(x, y). 
    \end{align}
\end{fact}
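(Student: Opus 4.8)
The inequality $\sup_{y\in\mathbb{Y}}\inf_{x\in\mathbb{X}}\Phi(x,y)\le\inf_{x\in\mathbb{X}}\sup_{y\in\mathbb{Y}}\Phi(x,y)$ requires no hypotheses: for any $x_0\in\mathbb{X}$ and $y_0\in\mathbb{Y}$ we have $\inf_x\Phi(x,y_0)\le\Phi(x_0,y_0)\le\sup_y\Phi(x_0,y)$, and one takes $\sup_{y_0}$ followed by $\inf_{x_0}$. So the entire content is the reverse inequality, which I would establish by contradiction: suppose there is a real $\gamma$ with $\sup_y\inf_x\Phi(x,y)<\gamma<\inf_x\sup_y\Phi(x,y)$. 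This yields two standing facts: (a) every $y\in\mathbb{Y}$ admits some $x$ with $\Phi(x,y)<\gamma$; and (b) every $x\in\mathbb{X}$ admits some $y$ with $\Phi(x,y)>\gamma$. Throughout I would work with the superlevel sets $S_y:=\{x\in\mathbb{X}:\Phi(x,y)\ge\gamma\}$, which are closed by upper semicontinuity of $\Phi(\cdot,y)$ and convex by quasi-concavity, hence compact; by (a) each $S_y$ is a proper subset of $\mathbb{X}$, and by (b) the family $\{S_y\}_{y\in\mathbb{Y}}$ covers $\mathbb{X}$.

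\textbf{The one-dimensional lemma (the crux).} The heart of the proof is a ``two-point'' minimax statement: for any $y_0,y_1\in\mathbb{Y}$, writing $y_t=(1-t)y_0+ty_1$, one has $\inf_{x}\max\{\Phi(x,y_0),\Phi(x,y_1)\}\le\sup_{t\in[0,1]}\inf_{x}\Phi(x,y_t)$. (The reverse of this is free: quasi-convexity of $\Phi(x,\cdot)$ gives $\Phi(x,y_t)\le\max\{\Phi(x,y_0),\Phi(x,y_1)\}$ for every $x$ and $t$.) I would prove it by contradiction as well, fixing a value $\beta$ and a slightly larger buffer $\beta'$ that separate the two sides. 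One then plays off the closed convex sets $\{x:\Phi(x,y_0)\ge\beta'\}$ and $\{x:\Phi(x,y_1)\ge\beta'\}$ --- which, by the separating-value assumption together with quasi-convexity in $y$, cover $\mathbb{X}$ --- against the open sets $O_t:=\{x:\Phi(x,y_t)<\beta'\}$, each nonempty because $\inf_x\Phi(x,y_t)<\beta$. A connectedness argument on the parameter interval $[0,1]$ (two relatively closed subsets of $[0,1]$ that cover it but cannot overlap, since an overlap would, via convexity of the superlevel sets and an intermediate-value step along a segment in $\mathbb{X}$, force $\Phi(x,y_t)\ge\beta'$ at a parameter where $\inf_x\Phi(x,y_t)<\beta$) then produces a contradiction. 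This is the one step that simultaneously uses compactness of $\mathbb{X}$, quasi-concavity and upper semicontinuity of $\Phi(\cdot,y)$, and quasi-convexity and lower semicontinuity of $\Phi(x,\cdot)$, and it is where strict-versus-nonstrict inequalities must be handled carefully (hence the $\beta'$ buffer).

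\textbf{Globalization.} Iterating the two-point lemma by induction on the number of points gives $\inf_x\max_{1\le i\le n}\Phi(x,y_i)=\sup_{y\in\mathrm{conv}\{y_1,\dots,y_n\}}\inf_x\Phi(x,y)$ for every finite $\{y_1,\dots,y_n\}\subseteq\mathbb{Y}$; the right-hand side is at most $\sup_{y\in\mathbb{Y}}\inf_x\Phi(x,y)<\gamma$, so $\inf_x\max_i\Phi(x,y_i)<\gamma$ for every finite subset of $\mathbb{Y}$. On the other hand, fact (b) says the closed convex sets $\{S_y\}$ cover the compact space $\mathbb{X}$; a compactness-and-convexity argument --- Helly's theorem in Sion's original treatment, or a direct induction as in \citet{komiya1988elementary} --- extracts finitely many $y_1,\dots,y_n$ with $\inf_x\max_i\Phi(x,y_i)\ge\gamma$, contradicting the previous sentence. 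This contradiction proves the reverse inequality, completing the proof.

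\textbf{Expected obstacle.} The genuinely hard part is the two-point lemma: it is the irreducible topological input (connectedness of a segment interacting with the convexity and semicontinuity hypotheses). Everything else is either trivial (the easy inequality; the induction on finitely many $y$'s once the lemma is available) or a routine compactness maneuver (the globalization step, modulo the bookkeeping needed to pass from finite subconfigurations of $\mathbb{Y}$ back to all of $\mathbb{Y}$). For the full details of the two delicate steps I would follow \citet{sion1958general} and the streamlined exposition of \citet{komiya1988elementary}.
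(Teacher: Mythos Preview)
The paper does not supply its own proof of this statement: it is stated as a background \emph{Fact} and attributed to \citet[Corollary~3.3]{sion1958general}, with the remark that the proof was later simplified by \citet{komiya1988elementary}. Your proposal is a faithful outline of precisely the Komiya argument (easy inequality, two-point connectedness lemma, induction to finite sets, compactness to globalize), so there is nothing to compare---you have sketched the very proof the paper is pointing the reader to.
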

We now recall a result~\citep[Theorem C, Pg. 126]{simmons1963topology} that allows us to establish the compactness condition needed to apply the minimax theorem in our example discussed in~\Cref{subsec:example}. 
\begin{fact}[Arzel\'a-Ascoli]
    \label{fact:arzela-ascoli} Let $(\mathbb{X}, \rho)$ denote a compact metric space, and let $\Theta$ denote a closed subset of $\big(C(\mathbb{X}, \mathbb{R}), \unorm\big)$, the space of continuous functions on $\mathbb{X}$ endowed with the $\sup$-norm $\|f\|_\infty = \sup_{x \in \mathbb{X}}|f(x)|$. Then, $\Theta$ is compact if and only if it is bounded and equicontinuous. 
\end{fact}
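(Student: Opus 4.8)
The plan is the classical two-direction argument. Throughout I use that, since $\mathbb{X}$ is compact, $(C(\mathbb{X},\mathbb{R}),\unorm)$ is a complete metric space (a uniform limit of continuous functions on $\mathbb{X}$ is continuous), that $\mathbb{X}$ is separable (a compact metric space admits a countable dense subset), and that in a metric space compactness is equivalent to sequential compactness.

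\emph{Forward direction ($\Theta$ compact $\implies$ bounded and equicontinuous).} Boundedness is a general fact about compact subsets of metric spaces: cover $\Theta$ by the unit balls $\{g : \|g - f\|_\infty < 1\}$, one for each $f \in \Theta$, extract a finite subcover with centers $f_1,\dots,f_N$, and conclude $\sup_{g \in \Theta}\|g\|_\infty \le 1 + \max_{i}\|f_i\|_\infty$. For equicontinuity, fix $\varepsilon > 0$ and cover $\Theta$ by finitely many balls of radius $\varepsilon/3$ centered at $f_1,\dots,f_N \in \Theta$. Each $f_i$ is uniformly continuous on the compact space $\mathbb{X}$, so there is $\delta_i > 0$ with $\rho(x,y) < \delta_i \implies |f_i(x)-f_i(y)| < \varepsilon/3$; put $\delta = \min_i \delta_i > 0$. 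Given $f \in \Theta$, pick $i$ with $\|f - f_i\|_\infty < \varepsilon/3$; then $\rho(x,y)<\delta$ gives $|f(x)-f(y)| \le |f(x)-f_i(x)| + |f_i(x)-f_i(y)| + |f_i(y)-f(y)| < \varepsilon$, i.e.\ $\Theta$ is (uniformly) equicontinuous.

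\emph{Reverse direction (closed, bounded, equicontinuous $\implies$ compact).} I would show $\Theta$ is sequentially compact. Fix a countable dense set $\{x_k : k \ge 1\} \subset \mathbb{X}$ and an arbitrary sequence $(f_n)$ in $\Theta$. Since $\Theta$ is bounded, $(f_n(x_1))_n$ is a bounded real sequence, hence has a convergent subsequence; applying this successively for $x_2, x_3, \dots$ and taking a diagonal subsequence, I obtain $(g_j) = (f_{n_j})$ for which $(g_j(x_k))_j$ converges for every $k$. The crux is to promote pointwise convergence on the dense set to uniform convergence on $\mathbb{X}$ via equicontinuity: given $\varepsilon > 0$, choose $\delta > 0$ from equicontinuity; by compactness, finitely many balls $B(x_{k_1},\delta),\dots,B(x_{k_M},\delta)$ with centers in the dense set cover $\mathbb{X}$; pick $J$ so large that $|g_j(x_{k_m}) - g_{j'}(x_{k_m})| < \varepsilon/3$ for all $j,j' \ge J$ and all $m \le M$. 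For arbitrary $x \in \mathbb{X}$, choosing $m$ with $\rho(x,x_{k_m}) < \delta$, the estimate $|g_j(x) - g_{j'}(x)| \le |g_j(x) - g_j(x_{k_m})| + |g_j(x_{k_m}) - g_{j'}(x_{k_m})| + |g_{j'}(x_{k_m}) - g_{j'}(x)| < \varepsilon$ holds for all $j,j' \ge J$. Hence $(g_j)$ is uniformly Cauchy, so it converges in $\unorm$ to some $g \in C(\mathbb{X},\mathbb{R})$, and since $\Theta$ is closed, $g \in \Theta$. Thus every sequence in $\Theta$ has a subsequence converging within $\Theta$, which is the desired compactness.

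\emph{Main obstacle.} The only nonroutine step is the diagonal-plus-equicontinuity argument in the reverse direction: one must correctly interleave the diagonal extraction over the countable dense set with a finite $\delta$-net of $\mathbb{X}$ whose centers lie in that dense set, and then chain the three-term triangle estimate so the Cauchy bound is uniform over all $x \in \mathbb{X}$. Everything else — boundedness of compact sets, uniform continuity on compacta, completeness of $(C(\mathbb{X},\mathbb{R}),\unorm)$, and the metric-space equivalence of compactness with sequential compactness — is standard and can be cited or dispatched quickly.
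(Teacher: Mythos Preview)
Your proof is correct and follows the classical textbook route (diagonal extraction on a countable dense set, then equicontinuity to upgrade to uniform Cauchy). However, the paper does not prove this statement at all: it is recorded as a background \emph{Fact} with a citation to \cite{simmons1963topology} and is invoked only as a black box in the proof of \Cref{lemma:weak-compactness-Holder}. So there is no ``paper's own proof'' to compare against; you have simply supplied the standard argument that the paper chose to omit.
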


Finally, we recall a simplified version of a martingale inequality derived by~\citet{pinelis1994optimum} that is used in the proof of~\Cref{theorem:general-DV-e-process}. 
\begin{fact}[adapted from Theorem~4.1 of~\citet{pinelis1994optimum}]
    \label{fact:pinelis-1994}
    Suppose $\{Y_i: i \geq 1\}$ denotes a real-valued martingale difference sequence with respect to some filtration $\{\calM_i: i \geq 0\}$. Define the partial sums,  $S_k = \sum_{i=1}^k Y_i$ for any $k \in [n]$,  and the maximum values $S^*_n = \max_{1 \leq k \leq n} |S_k|$ and $Y^*_n = \max_{1 \leq i \leq n} |Y_i|$. Then, for every $p \geq 2$, there exist finite positive constants $C_{1,p}, C_{2,p}$~(independent of $n$), such that 
    \begin{align}
        \EE \lp \lb |S^*_n|^p \rb \rp^{1/p} \leq C_{1,p} \lp \EE[(Y^*_n)^p] \rp^{1/p} + C_{2,p} \lp \EE\lb \lp\sum_{i=1}^n \EE\lb Y_i^2 \mid \calM_{i-1} \rb  \rp^{p/2} \rb \rp^{1/p}. 
    \end{align}
\end{fact}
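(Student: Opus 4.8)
The plan is to reduce the statement, via the upper bound in the Burkholder--Davis--Gundy (BDG) inequality, to a bound on the $L^{p/2}$-norm of the quadratic variation $[S]_n \coloneqq \sum_{i=1}^n Y_i^2$, and then to replace $[S]_n$ by its predictable compensator $\langle S \rangle_n \coloneqq \sum_{i=1}^n \EE[Y_i^2 \mid \calM_{i-1}]$ at the cost of a contribution that is controlled by $Y^*_n$. Since the asserted inequality is trivial when its right-hand side is infinite, I would assume throughout that $\EE[(Y^*_n)^p] < \infty$ and $\EE[\langle S \rangle_n^{p/2}] < \infty$; in particular $\EE[Y_i^2] \le \EE[(Y^*_n)^2] \le \EE[(Y^*_n)^p]^{2/p} < \infty$, so the $Y_i$ lie in $L^2$ and all the objects below are integrable.

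First I would invoke the BDG inequality for discrete-time martingales: for every $q \ge 1$ there is a finite constant $c_q$, not depending on $n$, with $\EE[(S^*_n)^q] \le c_q\, \EE[[S]_n^{q/2}]$. Applying this with $q = p$, taking $p$-th roots, and using subadditivity of $t \mapsto t^{1/p}$, it then suffices to establish
\begin{align}
    \EE\big[[S]_n^{p/2}\big] \;\le\; C_p\Big(\EE[(Y^*_n)^p] \,+\, \EE\big[\langle S \rangle_n^{p/2}\big]\Big)
    \label{eq:fact-pinelis-compensator}
\end{align}
for a constant $C_p$ independent of $n$, which already gives the claim with $C_{1,p}$ and $C_{2,p}$ depending only on $p$.

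To prove \eqref{eq:fact-pinelis-compensator} I would use the Doob decomposition $[S]_n = \langle S \rangle_n + M_n$, where $M_n \coloneqq \sum_{i=1}^n\big(Y_i^2 - \EE[Y_i^2 \mid \calM_{i-1}]\big)$ is a martingale with respect to $\{\calM_i\}$. Writing $\|\cdot\|$ for the $L^{p/2}$-norm (valid since $p/2 \ge 1$), Minkowski's inequality gives $\|[S]_n\| \le \|\langle S \rangle_n\| + \|M_n\|$, so it remains to bound $\|M_n\|$. Applying the BDG bound once more, now to $M$ with exponent $p/2 \ge 1$, yields $\EE[|M_n|^{p/2}] \le \EE[(M^*_n)^{p/2}] \le c_{p/2}\,\EE\big[[M]_n^{p/4}\big]$ with $[M]_n = \sum_{i=1}^n\big(Y_i^2 - \EE[Y_i^2 \mid \calM_{i-1}]\big)^2$. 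The elementary pointwise estimates $\big(Y_i^2 - \EE[Y_i^2\mid\calM_{i-1}]\big)^2 \le 2Y_i^4 + 2\,\EE[Y_i^2\mid\calM_{i-1}]^2$, $\sum_i Y_i^4 \le (Y^*_n)^2 [S]_n$, and $\sum_i \EE[Y_i^2\mid\calM_{i-1}]^2 \le \langle S \rangle_n^2$ give $[M]_n \le 2(Y^*_n)^2[S]_n + 2\langle S \rangle_n^2$ pointwise; raising to the power $p/4$ and using $(x+y)^r \le 2^{\max(r-1,0)}(x^r+y^r)$ together with Cauchy--Schwarz, I get $\EE[[M]_n^{p/4}] \le C_p'\big(\EE[(Y^*_n)^p]^{1/2}\,\EE[[S]_n^{p/2}]^{1/2} + \EE[\langle S \rangle_n^{p/2}]\big)$. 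Substituting back and taking $(2/p)$-th powers (subadditive since $p \ge 2$), and writing $A \coloneqq \EE[[S]_n^{p/2}]^{2/p}$, $a \coloneqq \EE[(Y^*_n)^p]^{2/p}$, $B \coloneqq \EE[\langle S \rangle_n^{p/2}]^{2/p}$, the chain $\|[S]_n\| \le \|\langle S \rangle_n\| + \|M_n\|$ becomes a \emph{self-improving} inequality of the shape $A \le c\,(B + a^{1/2}A^{1/2})$. Because the exponent of $A$ on the right is $1/2 < 1$, solving the resulting quadratic in $A^{1/2}$ (equivalently, applying Young's inequality to absorb $a^{1/2}A^{1/2}$) gives $A \le C_p(a + B)$, which is exactly \eqref{eq:fact-pinelis-compensator}. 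The boundary value $p = 2$ is handled separately and trivially: Doob's $L^2$-inequality and orthogonality of martingale increments give $\EE[(S^*_n)^2] \le 4\,\EE[\langle S \rangle_n]$, with no $Y^*_n$ term needed.

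I expect the only genuine obstacle to be this self-referential structure: bounding $\|M_n\|$ naturally reproduces a factor of $[S]_n$, so the estimate only closes once the elementary inequalities are arranged so that this factor appears with an exponent strictly below $1$ and can be absorbed into the left-hand side; everything else is standard BDG machinery with constants that are manifestly $n$-free. As an alternative to this self-contained route, the statement is precisely the scalar specialization of Burkholder's inequality in its compensator form, and of Theorem~4.1 of \citet{pinelis1994optimum}, which could simply be cited; I have sketched the derivation because it makes transparent why $C_{1,p}$ and $C_{2,p}$ do not depend on $n$.
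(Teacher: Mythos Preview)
The paper does not prove this statement at all: it is recorded in the appendix as a background \emph{Fact}, explicitly attributed to Theorem~4.1 of \citet{pinelis1994optimum}, and is simply invoked where needed. So the paper's ``proof'' is a citation, which you yourself note as an alternative at the end of your proposal.

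Your self-contained argument is correct and is one of the standard routes to the Rosenthal--Burkholder inequality in its compensator form. The reduction via BDG to controlling $\|[S]_n\|_{p/2}$, the Doob decomposition $[S]_n=\langle S\rangle_n+M_n$, the second application of BDG to $M$, the pointwise bounds $\sum_i Y_i^4\le (Y_n^*)^2[S]_n$ and $\sum_i \EE[Y_i^2\mid\calM_{i-1}]^2\le \langle S\rangle_n^2$, and the absorption step via Young's inequality all check out. One small point worth making explicit when you close the loop: the absorption of $a^{1/2}A^{1/2}$ into the left side requires $A<\infty$ a priori, and this follows from your standing hypothesis $\EE[(Y_n^*)^p]<\infty$ via the crude bound $[S]_n\le n(Y_n^*)^2$. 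With that, your derivation is complete and, relative to the paper, strictly more informative since it exposes why the constants are $n$-free.
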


\section{DV $e$-process Instantiations}
\label{appendix:DV-instantiations}
In this section, we instantiate the general $e$-process introduced in~\Cref{def:DV-e-process} to two important settings: (i) finite alphabet with composite null, and (ii) testing the mean of bounded observations. 

\subsection{Finite Alphabet Setting}
\label{appendix:instantiation-finite-alphabet}

We return to the case of testing with observations lying in some finite alphabet $\calX = \{x_1, \ldots, x_m\}$, and a null class $\calP_0 = \{P_\theta: \theta \in \Theta_0\}$ for some compact convex subset $\Theta_0$ of the $(m-1)$-dimensional simplex~(here $P_\theta$ denotes the distribution on $\calX$ with pmf $\theta$). Then, let us introduce the class of functions: 
\begin{align}
    \calF = \{\log{\varphi}: \varphi \in \Phi\}, \qtext{with} \Phi \coloneqq [\epsilon, 1/\epsilon]^m. 
\end{align}
This $\calF$ corresponds to all alternative distributions with pmf $\theta$, such that there exists a $\theta_0 \in \Theta_0$ with $\theta/\theta_0 \in \Phi$. Then, the function $f_n$ in round $n \geq 2$ is defined as 
\begin{align}
    f_n = \log \varphi_n, \qtext{with} \varphi_n \in \argmax_{\varphi \in \Phi} \frac{1}{n-1} \sum_{i=1}^{n-1} \log \varphi[X_i] - \max_{\theta_0 \in \Theta_0} \log \sum_{x \in \calX} \theta_0[x] \varphi[x]. 
\end{align}
It is easy to verify that this is a finite-dimensional convex-concave saddle point problem and can be solved with sufficient accuracy using existing methods. Having obtained the parameter $\varphi_n$~(or function $f_n$), we can then approximate $\psi_0(f_n)$, by solving 
\begin{align}
    \max_{\theta_0 \in \Theta_0} \log \sum_{x \in \calX} \theta_0[x] \varphi_n[x]. 
\end{align}
This is again a convex program over a compact convex domain, and can be solved using off-the-shelf solvers with high accuracy. With these two terms at hand, we can then construct the e-process, perhaps with some additional correction if necessary, as discussed in~\Cref{subsubsec:finite-dim}. 

\subsection{Bounded Mean Testing}
\label{appendix:instantiation-bounded-mean}
We now discuss the application of our DV $e$-process for testing the mean of a stream of bounded observations. In particular, assume that $\calX = [0,1]$, and let 
\begin{align}
    \calP_0 = \{P \in \calP(\calX): \EE_P[X] = \mu_0\}, \qtext{for some known} \mu_0 \in \calX = [0,1].  \label{eq:bounded-mean-null-class}
\end{align}
In other words, the null  class consists of all distributions supported on $\calX$ with mean exactly equal to $\mu_0$. Let us introduce the test-function class for some small $\epsilon \in (0, 1)$: 
\begin{align}
    \calF = \lbr x \mapsto  \log (1 + \varphi(x- \mu_0)): \varphi  \in \Phi_\epsilon\rbr, \qtext{with} \Phi_\epsilon = \lb -\frac{1-\epsilon}{1-\mu_0}, \frac{1-\epsilon}{\mu_0} \rb. \label{eq:bounded-mean-test-functions}
\end{align}
The specific choice of this particular test function class is motivated by two reasons. The first is that for any $f \equiv f_{\varphi} \in \calF$, we have 
\begin{align}
    \psi_0(f_\varphi) = \sup_{Q \in \calP_0} \log \lp \EE_Q[\exp \lp \log(1 + \varphi(X-\mu_0)) \rp] \rp= \sup_{Q \in \calP_0} \log \lp 1 + \varphi\lp EE_Q[X] - \mu_0 \rp \rp = 0. \label{eq:bounded-mean-psi-0}
\end{align}
The second reason is that a duality argument by~\citet{honda2010asymptotically} tells us that for any $P \not \in \calP_0$, we have the following representation of the $KL_{\inf}$ term 
\begin{align}
    \gamma^*(P, \calP_0) = \sup_{\varphi \in \lb -\frac{1}{1-\mu_0}, \frac{1}{\mu_0}\rb } \EE_P[\log(1 + \varphi(X-\mu_0))]. 
\end{align}
Together, these two results imply that our choice of $\calF$~(with $\epsilon >0$) can be used to design optimal tests against alternatives for which $\varphi^* \equiv \varphi^*(P, \calP_0)$ lies in the $\epsilon$-interior of the domain in the above dual representation. 
\begin{align}
    \calP_1 = \{P \in \calP(\calX): \varphi^*(P, \calP_0) \in \Phi_\epsilon\}, \qtext{which implies}
    d_{\calF}(P \parallel \calP_0) = \gamma^*(P, \calP_0), \label{eq:bounded-mean-alt-class}
\end{align}
for all $P \in \calP_1$. We can now state the main result of this section. 

\begin{proposition}
    \label{prop:bounded-mean} Given a stream of $\calX = [0,1]$-valued observations $\{X_n: n \geq 1\}$ drawn \iid from a distribution $P \in \calP_0 \cup \calP_1$, with the classes $\calP_0$ and $\calP_1$ introduced above in~\eqref{eq:bounded-mean-null-class} and~\eqref{eq:bounded-mean-alt-class} respectively. Then, define the process $\process{W_n}$, with $W_0 = W_1= 1$ and 
    \begin{align}
        W_n = W_{n-1} \times \lp 1 + \varphi_n (X_n - \mu_0) \rp, \qtext{with} \varphi_n \in \argmax_{\varphi \in \Phi_\epsilon} \frac{1}{n-1} \sum_{i=1}^{n-1} \log \lp 1 + \varphi(X_i - \mu_0) \rp, 
    \end{align}
    for all $n \geq 2$. Then, with $\tau_\alpha = \inf \{n \geq 1: W_n \geq 1/\alpha\}$ for $\alpha \in (0,1)$, we have 
    \begin{align}
        \limsup_{\alpha \downarrow 0} \frac{\EE_P[\tau_\alpha]}{\log(1/\alpha)/\gamma^*(P, \calP_0)} \leq 1, \qtext{for all} P \in \calP_1. 
    \end{align}
    In other words, the level-$\alpha$ tests $\process{\tau_\alpha}$ are optimal in the sense of~\Cref{def:first-order-optimality}. 
\end{proposition}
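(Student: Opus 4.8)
The plan is to obtain \Cref{prop:bounded-mean} as a direct instantiation of \Cref{theorem:general-DV-e-process}: I would verify that the triple $(\calP_0, \calP_1, \calF)$ specified in \eqref{eq:bounded-mean-null-class}, \eqref{eq:bounded-mean-test-functions} and \eqref{eq:bounded-mean-alt-class} satisfies conditions \textbf{(A1)}--\textbf{(A5)} of \Cref{assump:general}, and check that the process $\process{W_n}$ in the statement is exactly the DV $e$-process of \Cref{def:DV-e-process} for this choice. The structural simplification that makes everything go through, already recorded in \eqref{eq:bounded-mean-psi-0}, is that $\psi_0(f_\varphi) = 0$ for every $f_\varphi \in \calF$: for any $Q \in \calP_0$ one has $\EE_Q[e^{f_\varphi(X)}] = \EE_Q[1 + \varphi(X - \mu_0)] = 1 + \varphi(\EE_Q[X] - \mu_0) = 1$. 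Hence $\exp(f_n(X_n) - \psi_0(f_n)) = 1 + \varphi_n(X_n - \mu_0)$, the ERM objective collapses to $\varphi \mapsto \frac{1}{n-1}\sum_{i=1}^{n-1}\log(1 + \varphi(X_i - \mu_0))$, and — choosing the arbitrary $f_1$ of \Cref{def:DV-e-process} to be the constant function $0 \in \calF$ (i.e.\ $\varphi_1 = 0$) — the two processes coincide. Moreover, since $\varphi_n$ is $\calM_{n-1}$-measurable and $\EE_Q[X_n \mid \calM_{n-1}] = \mu_0$, the process $\process{W_n}$ is in fact a nonnegative martingale under every $Q \in \calP_0$, so the level-$\alpha$ claim follows already from Ville's inequality.

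It then remains to check \textbf{(A1)}--\textbf{(A5)}. For \textbf{(A1)}, convexity of $\calP_0$ is immediate, and weak compactness holds because $\calP(\calX)$ is weakly compact (Prokhorov, $\calX = [0,1]$ compact) while $\calP_0 = \{Q : \EE_Q[X] = \mu_0\}$ is a weakly closed subset, the evaluation $Q \mapsto \EE_Q[X]$ being weakly continuous since $x \mapsto x$ is bounded continuous. For \textbf{(A2)}: the inequality $\gamma^*(P, \calP_0) \geq d_\calF(P \parallel \calP_0)$ is trivial from the DV representation (\Cref{fact:donsker-varadhan}) together with $\psi_0 \equiv 0$ on $\calF$; for the reverse, the dual formula of \citet{honda2010asymptotically} gives $\gamma^*(P, \calP_0) = \sup_{\varphi \in [-1/(1-\mu_0),\, 1/\mu_0]} \EE_P[\log(1 + \varphi(X - \mu_0))]$, and for $P \in \calP_1$ the maximizer $\varphi^*$ lies in $\Phi_\epsilon$ by definition, so restricting the supremum to $\Phi_\epsilon$ (equivalently, to $\calF$) does not decrease it, whence $d_\calF(P \parallel \calP_0) = \sup_{f \in \calF}\EE_P[f(X)] = \gamma^*(P, \calP_0)$; positivity is the statement $P \notin \calP_0$, the case $P \in \calP_0$ being vacuous.

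For \textbf{(A4)}, $\sup_{f \in \calF}|\psi_0(f)| = 0 < \infty$. For \textbf{(A3)}, note that for $\varphi \in \Phi_\epsilon$ and $x \in [0,1]$ one has $1 + \varphi(x - \mu_0) \in [\epsilon,\, C_{\epsilon,\mu_0}]$ with $C_{\epsilon,\mu_0} := 1 + (1-\epsilon)\max\{\tfrac{1-\mu_0}{\mu_0}, \tfrac{\mu_0}{1-\mu_0}\} < \infty$ (the lower endpoint $\epsilon$ is precisely why the $\epsilon$-margin was built into $\Phi_\epsilon$), so $\calF$ is uniformly bounded by $M_{\epsilon,\mu_0} := \max\{|\log\epsilon|, \log C_{\epsilon,\mu_0}\}$ and all the moments appearing in \textbf{(A3)} are at most $M_{\epsilon,\mu_0}^{4+\delta}$ for any $\delta > 0$. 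For \textbf{(A5)}: the class $\calF$ is indexed by the compact interval $\Phi_\epsilon \subset \R$, and $\varphi \mapsto \log(1 + \varphi(x - \mu_0))$ is $1/\epsilon$-Lipschitz in $\varphi$ uniformly over $x \in [0,1]$ (its derivative $(x-\mu_0)/(1+\varphi(x-\mu_0))$ has absolute value at most $1/\epsilon$ on $\Phi_\epsilon$). A routine covering-number / symmetrization bound thus gives $\EE_P[\sup_{f \in \calF}|\frac1n\sum_{i=1}^n f(X_i) - \EE_P[f(X)]|] = O(n^{-1/2})$, and since every $f \in \calF$ is bounded by $M_{\epsilon,\mu_0}$, McDiarmid's inequality upgrades this to $\PP_P(\sup_{f\in\calF}|\frac1n\sum_{i=1}^n f(X_i) - \EE_P[f(X)]| \geq Cn^{-1/2} + t) \leq \exp(-nt^2/(2M_{\epsilon,\mu_0}^2))$; taking $t_n = M_{\epsilon,\mu_0}\sqrt{2(2+\delta/3)\log n / n}$ makes the right-hand side at most $n^{-(2+\delta/3)}$, so \textbf{(A5)} holds with $r_n = Cn^{-1/2} + t_n \to 0$.

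With \textbf{(A1)}--\textbf{(A5)} in hand and the two processes identified, \Cref{theorem:general-DV-e-process} yields that $\process{\tau_\alpha}$ is level-$\alpha$ for every $Q \in \calP_0$ and that $\limsup_{\alpha\downarrow 0}\EE_P[\tau_\alpha]/(\log(1/\alpha)/\gamma^*(P,\calP_0)) \leq 1$ for every $P \in \calP_1$, which together with \Cref{fact:lower-bound} gives first-order optimality. There is no genuinely hard step here: the only non-elementary ingredient is the quoted Honda--Takemura dual representation underlying \textbf{(A2)}, and the only place that needs a little care is the quantitative uniform law of large numbers in \textbf{(A5)} with the prescribed polynomial tail — but since $\calF$ is uniformly bounded and Lipschitz-parametrized by a one-dimensional compact set, this is a textbook empirical-process estimate; everything else is bookkeeping around the $\psi_0 \equiv 0$ identity.
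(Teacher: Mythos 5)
Your proposal is correct and follows essentially the same route as the paper's proof: verify conditions \textbf{(A1)}--\textbf{(A5)} of \Cref{assump:general} (Prokhorov for compactness, the Honda--Takemura dual for \textbf{(A2)}, the $\psi_0 \equiv 0$ identity for \textbf{(A4)}, covering-number estimates for \textbf{(A5)}) and invoke \Cref{theorem:general-DV-e-process}. You fill in slightly more detail than the paper does — in particular you explicitly check \textbf{(A3)} (which the paper's proof omits, though it is indeed immediate from the uniform boundedness of $\calF$ on $\Phi_\epsilon$) and you explicitly identify the betting process in the statement with the DV $e$-process of \Cref{def:DV-e-process} — but there is no difference in strategy.
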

\begin{proof}
This result follows by simply verifying the conditions of~\Cref{assump:general}, and then appealing to~\Cref{theorem:general-DV-e-process}. Since all the distributions are supported on $[0,1]$, an application of Prohorov's theorem can be used to verify the compactness of $\calP_0$. The convexity of $\calP_0$ is a simple consequence of the linearity of expectation. Together, these two observations show that the condition \textbf{(A1)} is satisfied by $\calP_0$. Condition~\textbf{(A2)} is ensured by definition of $\calP_1$, and \textbf{(A4)} is also trivially satisfied since $\psi_0(f)=0$ for all $f \in \calF$ as we saw in~\eqref{eq:bounded-mean-psi-0}. Finally, standard covering-number arguments can be used to show the learnability of the function class $\calF$ considered, thus verifying~\textbf{(A5)}.
\end{proof}

\begin{remark}
    \label{remark:bounded-mean} Note that this construction of $\process{W_n}$ and $\process{\tau_\alpha}$  discussed above is exactly the ``betting-based'' test with the so-called GRAPA betting strategy developed by~\citet{waudby2024estimating}. The term $\varphi_n$ can be computed using off-the-shelf convex optimization solvers in a computationally feasible manner. Thus, our result shows that the GRAPA-based test is optimal in the small-$\alpha$ regime for alternatives whose optimal ``bet'' $\varphi^*(P, \calP_0)$ lies in the $\epsilon$-interior set $\Phi_\epsilon$. This restriction can be removed by simply using a countable mixture over function classes, that we introduced earlier in~\Cref{subsec:example}, and we omit the details to avoid repetition. 
\end{remark}

\section{Proof of~\Cref{theorem:UI-test-finite} (Finite Alphabet)} 
\label{proof:UI-test}

The level-$\alpha$ property of this test has been established by~\citet{wasserman2020universal}. We include the details here for completeness.  The property essentially hinges on the observation that under $H_0$, if $\theta_0$ is the true parameter, then 
\begin{align}
    \UIevalue_n = \frac{\theta_J^n[X^n]}{\thetahat^n_0[X^n]} = \frac{\theta_J^n[X^n]}{\sup_{\theta \in \Theta_0} \theta^n[X^n]} \; \stackrel{a.s.}{\leq} \; \frac{\theta_J^n[X^n]}{ \theta_0^n[X^n]} \eqcolon L_n^{\theta_0}. 
\end{align}
It is easy to verify that under $\theta_0$, the process $\{L_n^{\theta_0}: n \geq 0\}$ is a nonnegative martingale  with an initial value of $L_0^{\theta_0}=1$, which implies that 
\begin{align}
    \mathbb{P}_{\theta_0}(\tau_\alpha < \infty) = \mathbb{P}_{\theta_0}\lp \exists n \geq 1: \UIevalue_n \geq 1/\alpha \rp \leq   \mathbb{P}_{\theta_0}\lp \exists n \geq 1: L_n^{\theta_0} \geq 1/\alpha \rp \leq \alpha, 
\end{align}
where the last inequality follows from an application of Ville's inequality.  Taking a supremum over all $\theta_0 \in \Theta_0$ gives us the first part of~\Cref{theorem:UI-test-finite}. 

\paragraph{Expected Stopping Time.} To study the expected stopping time property, consider the alternative with the true parameter $\theta_1 \in \Theta_1$. By the regret of the universal compression scheme, we know that 
\begin{align}
    S_n \coloneqq \log \UIevalue_n \geq  \sup_{\theta \in \Theta} \log \theta^n[X^n] - \log {\thetahat^n_0[X^n]} - \Reg_n, \qtext{with} \Reg_n = \frac{m-1}{2}\log n + C_m, 
\end{align}
for some constant $C_m$ independent of $\theta_1$. 
Let $\phat_n$ denote the empirical probability mass function~(pmf) based on $X^n$. Observe that for any $\theta \in \Theta$, we have 
\begin{align}
    \log \theta^n[X^n] = n \sum_{x \in \calX} \phat_n[x] \log \theta[x] = n \lp - H(\phat_n) - \dkl(\phat_n \parallel \theta) \rp. 
\end{align}
Here $H(\cdot)$ denotes the Shannon entropy for discrete distributions, and $\dkl$ is the relative entropy or KL-divergence. 
By the nonnegativity of relative entropy, this implies that $\phat_n$ is also the MLE (over all $\Theta$) based on $X^n$, which means that 
\begin{align}
    S_n \geq - nH(\phat_n) - \log \thetahat^n_0[X^n] - \Reg_n. \label{eq:stopping-time-composite-proof-1}
\end{align}
Now, we analyze the second term above to see that 
\begin{align}
    \log \sup_{\theta_0 \in \Theta_0} \theta_0^n[X^n] &= n \sup_{\theta_0 \in \Theta_0}\sum_{x \in \calX} \phat_n \log \theta_0[x]  = -n  \sup_{\theta_0 \in \Theta_0}  \lp H(\phat_n) + \dkl(\phat_n \parallel \theta_0) \rp \\
    & = -n H(\phat_n) -n \dkl(\phat_n \parallel \Theta_0), \label{eq:stopping-time-composite-proof-2}
\end{align}
where we define $\dkl(\phat_n \parallel \Theta_0)$ as $\inf_{\theta_0 \in \Theta_0} \dkl(\phat_n \parallel \theta_0)$. On combining~\eqref{eq:stopping-time-composite-proof-1} and~\eqref{eq:stopping-time-composite-proof-2}, we get 
\begin{align}
    S_n \geq n \dkl(\phat_n \parallel \Theta_0) - \Reg_n. \label{eq:stopping-time-composite-proof-3}
\end{align}
Now, we recall the Donsker-Varadhan~(DV) variational representation~\citep[Theorem 4.6]{polyanskiy2025information}  of relative entropy, to observe that 
\begin{align}
\dkl(p \parallel \Theta_0) = \inf_{\theta_0 \in \Theta_0} \sup_{f:\calX \to \mathbb{R}} \lp \mathbb{E}_p[f] - \log E_{\theta_0}[e^f] \rp \eqcolon  \inf_{\theta_0 \in \Theta_0} \sup_{f:\calX \to \mathbb{R}} \Phi(f, \theta_0).
\end{align}

Now, observe that the function $\Phi$ is concave in $f$~(due to the convexity of log-sum-exp function), and convex in  $\theta_0$~(due to the concavity of $\log$ function). We have assumed that the null class $\Theta_0$ is compact and convex, and additionally note that the class of functions $\R^{\calX}$ is convex. Hence, the hypotheses of Sion's minimax theorem~(recalled in~Fact~\ref{fact:Sions-Minimax-Theorem}) are satisfied, and we have 
\begin{align}
    &\dkl(p \parallel \Theta_0) = \sup_{f:\calX \to \mathbb{R}} \inf_{\theta_0 \in \Theta_0} \mathbb{E}_p[f] - \log \mathbb{E}_{\theta_0}[e^f] =  \sup_{f:\calX \to \mathbb{R}}  \mathbb{E}_p[f] - \psi_0(f), 
\end{align}
where  $\psi_0(f) \coloneqq \sup_{\theta_0 \in \Theta_0} \log \mathbb{E}_{\theta_0}[e^f]$.  Now, let us fix an arbitrary (for now) $f:\calX \to \mathbb{R}$, which gives a lower bound in the equality above, and using this with~\eqref{eq:stopping-time-composite-proof-3}, we get 
\begin{align}
S_n \geq n \dkl(\phat_n \parallel \Theta_0) \geq \mathbb{E}_{\phat_n}[f] - \psi_0(f) - \Reg_n = \underbrace{\sum_{i=1}^n f(X_i) - n \psi_0(f)}_{\coloneqq S_n(f)} - \Reg_n. 
\end{align}
Thus, the stopping time $T_f = \inf \{n \geq 1: S_n(f) \geq \log(1/\alpha) + \Reg_n\}$ is an upper bound on $\tau_\alpha = \inf \{n \geq 1 : S_n \geq \log(1/\alpha)\}$. As $T_f$ is an upper bound on $\tau_\alpha$ for arbitrary $f$, this relation holds for the $f$ defined as follows: 
\begin{align}
    f \in \argmax_{f':\calX \to \mathbb{R}} \mathbb{E}_{\theta_1}[f'] - \psi_0(f')  \quad \implies \quad \mathbb{E}_{\theta_1}[f] - \psi_0(f) = \dkl(\theta_1 \parallel \Theta_0) = \gamma_{\theta_1}. \label{eq:optimal-f}
\end{align}
Next, let $\theta^\dagger$ denote any minimizer in the definition of $\gamma^*(\theta_1, \Theta_0)$; that is, 
\begin{align}
    \theta^\dagger \in \argmin_{\theta \in \Theta_0} \dkl(\theta_1 \parallel \theta), 
\end{align}
whose existence is justified by the compactness of $\Theta_0$, and the lower-semicontinuity of relative entropy. If $\gamma^*(\theta_1, \Theta_0) < \infty$, then we must have $\theta^{\dagger}[x]>0$ for all $x $ such that $\theta_1[x]>0$. Finally, note that the optimal $f$ defined in~\eqref{eq:optimal-f} is of the form $f(x) = \log \theta_1[x]/\theta^\dagger[x] + c$, and we can set $c=0$ without loss of generality. 
With these choices of $f$ and $\theta^\dagger$, we can conclude the following: 
\begin{itemize}

\item Since $f(x) = \log \theta_1[x]/\theta^\dagger[x]$ and $\theta^\dagger[x]>0$ for all $x$ such that $\theta_1[x]>0$, we can observe that 
\begin{align}
    f(x) - \psi_0(f) \leq f(x) - \sup_{\theta_0\in \Theta_0} \log \sum_{y \in \calX} \theta_0[y]e^{f(y)} \leq f(x) - \log \sum_{y \in \calX} \theta^\dagger[y] \times\frac{\theta_1[y]}{\theta^\dagger[y]} = f(x). 
\end{align}
    Hence, the one-step increment of $S_n(f)$ is upper bounded by $C_f \coloneqq \max_{x \in \calX} \log \theta_1[x]/\theta^\dagger[x]$. 

\item The stopping time $T_f$ has finite expectation. To see this, note that $\Reg_n$ is logarithmic in $n$, so there exists some finite $n_0$, such that for all $n \geq n_0$, we have $\Reg_n + \log(1/\alpha) \leq n \gamma^*(\theta_1, \Theta_0)/2$. Hence, for all $n$ larger than $n_0$, we have 
\begin{align}
    \PP_{\theta_1}( T_f  > n) \leq \PP_{\theta_1}\lp S_n(f) < \log(1/\alpha) + \Reg_n \rp \leq\PP_{\theta_1}\lp S_n(f) - \EE_{\theta_1}[S_n(f)] < - \tfrac 12  \EE_{\theta_1}[S_n(f)] \rp. 
\end{align}
Since the \iid increments of $S_n(f)$ are bounded, we can apply Hoeffding's inequality to obtain 
\begin{align}
    \EE_{\theta_1}[T_f] \leq n_0 + \sum_{n \geq n_0} \PP_{\theta_1}\lp T_f > n \rp \leq n_0 + \sum_{n \geq n_0}e^{-c n} < \infty, 
\end{align}
for some constant $c>0$ depending on the range of increments of $S_n(f)$. 

    % \begin{align}
    %     \lim_{n \to \infty} \frac{1}{n} S_n(f) = \lim_{n \to \infty} \frac{1}{n} \sum_{i=1}^n f(X_i) - \psi_0(f) \; \stackrel{\mathrm{a.s.}}{=}\; \mathbb{E}_{\theta_1}[f(X)] - \psi_0(f) = \gamma_{\theta_1}>0, 
    % \end{align}
    % where we used the assumption on $f$ in the last inequality. On the other hand, since $r_n \asymp \log n$, we have  $(\log(1/\alpha) + r_n)/n \to 0$, which means that $S_n(f)$ eventually crosses the boundary $\log(1/\alpha) + r_n$ with probability $1$; that is, $T_f < \infty$ almost surely.   
\end{itemize}
Together, these two facts imply that we can use Wald's identity to conclude that 
\begin{align}
   \mathbb{E}_{\theta_1}[S_{T_f}(f)] =  \mathbb{E}_{\theta_1}[T_f] \mathbb{E}_{\theta_1}[f(X) - \psi_0(f)] \leq \log(1/\alpha) + \frac{m-1}{2} \log \mathbb{E}_{\theta_1}[T_f] + C_m + C_f. 
\end{align}
Let $\gamma^*(\theta_1, \Theta_0)$ denote $\mathbb{E}_{\theta_1}[f(X) - \psi_0(f)]$, and observe that with $y = \mathbb{E}_{\theta_1}[T_f]$, we have 
\begin{align}
     y \gamma^*(\theta_1, \Theta_0) \leq \log(1/\alpha) + \frac{m-1}{2} \log y + C_m + C_f. 
\end{align}
By some standard calculations~(see~\Cref{lemma:fixed-point-1} for details) we can show that 
\begin{align}
    \mathbb{E}_{\theta_1}[T_f] 
    &= \frac{\log(1/\alpha)}{\gamma^*(\theta_1, \Theta_0)} \lp 1 + o(1)\rp, 
\end{align}
where $o(1)$ indicates terms that go to zero as $\alpha \to 0$ for a fixed $\theta_1$. This completes our proof. \hfill \qedsymbol

\section{Proof of~\Cref{theorem:general-DV-e-process}~(General Alphabet)}
\label{proof:general-DV-e-process}
 To start with, observe that due to the weak compactness and convexity of $\calP_0$ assumed in~\textbf{(A1)}, along with the (weak) lower semicontinuity of relative entropy, for any $P \not \in \calP_0$, we have 
\begin{align}
\gamma^* \equiv \gamma^*(P, \calP_0) \coloneqq \inf_{P_0 \in \calP_0} \dkl(P \parallel P_0) = \dkl(P \parallel P^*_0),     
\end{align}
for some element $P^*_0 \equiv P^*_0(P) \in \calP_0$. Now, using the Donsker-Varadhan representation~(Fact~\ref{fact:donsker-varadhan}), we have 
\begin{align}
    \gamma^* = \inf_{P_0 \in \calP_0} \sup_{f \in \calC_b(\calX)} \EE_P[f(X)] - \log \EE_{P_0}[e^{f(X)}] =  \sup_{f \in \calC_b(\calX)} \inf_{P_0 \in \calP_0} \EE_P[f(X)] - \log \EE_{P_0}[e^{f(X)}], 
\end{align}
where $\calC_b(\calX)$ denotes the space of continuous bounded functions on the metric space $(\calX, \rho)$, and the swap of the $\inf$-$\sup$  in the second equality is justified by the  minimax theorem recalled in~Fact~\ref{fact:Sions-Minimax-Theorem}. Finally, note that by condition~\textbf{(A2)} of~\Cref{assump:general}, we know that we can restrict our attention to the class $\calF$ without loss of optimality in $\gamma^*$; that is, 
\begin{align}
    \gamma^*(P, \calP_0) = \sup_{f \in \calF} \EE_P[f(X)] - \psi_0(f), \qtext{where} \psi_0(f) = \sup_{P_0 \in \calP_0} \log \EE_{P_0}[e^{f(X)}]. 
\end{align}
Hence, for any arbitrary $\varepsilon \in (0, 1/3)$, there exists an $f^* \in \calF$, such that $\EE_P[f^*(X)] - \psi_0(f^*) \geq \gamma^*(1 - \varepsilon)$.  

For the rest of the proof, we will use $C$ to denote any universal positive constant, whose exact value may change at different points.
For any $i \geq 1$, introduce the terms
\begin{align}
    V_i = f_i(X_i) - \psi_0(f_i), \qtext{and} \Vbar_i = V_i - \EE[V_i \mid \calM_{i-1}], \qtext{for all} i \geq 1. 
\end{align}
With these terms, we define the following events: 
\begin{align}
    G_{n,1} = \lbr \EE[ V_n \mid \calM_{n-1}] \geq \gamma^*(1- \varepsilon) - 2 r_n \rbr, \qtext{and}
    G_{n,2} = \lbr \sum_{i=1}^n \Vbar_i  \geq -C n^a \rbr, \label{eq:Gn1-Gn2-def}
\end{align}
where $a = 1 - \delta/(6(4+\delta)) \in (0,1)$. Both $G_{n,1}$ and $G_{n,2}$ occur with probability at least $1-1/n^{2+\delta/3}$ each, as we show in~\Cref{lemma:uniform-erm} for $G_{n,1}$ and~\Cref{lemma:martingale-concentration} for $G_{n,2}$. In particular,~\Cref{lemma:uniform-erm} relies on the uniform convergence condition~\textbf{(A5)}, while~\Cref{lemma:martingale-concentration} uses the uniform $4+\delta$ condition~\textbf{(A3)}, and the uniform log-MGF bound assumed in~\textbf{(A4)}. 
As a result, we have the following, with $H_n \coloneqq \cap_{m \geq n} G_m$: 
\begin{align}
    \sum_{n \geq 1} \PP(H_n^c) = \sum_{n \geq 1} \PP\big( \lp \cap_{m \geq n} G_m \rp^c \big) \leq \sum_{n \geq 1} \sum_{m \geq n} \frac{2}{m^{2 + \tfrac{\delta}{3}}} < \infty. \label{eq:event-Hn-def}
\end{align}
Hence, there exists an almost surely finite random variable $N^*$ such that the event $G_n$ occurs for all $n \geq N^*$. In other words, $\{N^* \geq n\} \subset \cup_{m \geq n} G_m^c$, which implies that 
\begin{align}
    \EE[N^*] &= \sum_{n \geq 1} \PP(N^* \geq n) \leq \sum_{n \geq 1} \sum_{m \geq n} \PP(G_m^c) \leq \sum_{n \geq 1} \sum_{m \geq n} \frac{2}{m^{2 + \tfrac{\delta}{3}}} \eqcolon C^* \equiv C^*(\delta) <  \infty. \label{eq:Nstar-bound}
\end{align}
Now, for the same value of  $\varepsilon \in (0, 1/3)$, define 
\begin{align}
    N_0 = \inf \{n \geq 1: 2r_n \leq \varepsilon \gamma^*(1-\varepsilon) \}, \qtext{and} N_1 = \max \{N_0, \, N^*\}. \label{eq:N1-def} 
\end{align}
Since we have assumed that $r_n \to 0$ with $n$, it means that for any $\varepsilon>0$, the term $N_0$ is a finite deterministic constant depending on $\varepsilon$,  and $\gamma^*$.  For any $n \geq 1$, we have the following, with $\gamma_\varepsilon \coloneqq \gamma^*(1-2 \varepsilon) \leq \gamma^*(1-\varepsilon)^2 $ and some constant $B' < \infty$~(depending on $B_P$ and $B_0$ from~\Cref{assump:general}): 
\begin{align}
    \log W_n & = \sum_{i=1}^n V_i = \sum_{i=1}^n \Vbar_i + \sum_{i=1}^n \EE[V_i \mid \calM_{i-1}] && (\text{since } \Vbar_i = V_i - \EE[V_i \mid \calM_{i-1}])\\ 
    & = \sum_{i=1}^n \Vbar_i  + \sum_{i=1}^{N_1-1} \EE[V_i \mid \calM_{i-1}] + \sum_{i=N_1}^n \EE[V_i \mid \calM_{i-1}]  \\ 
    & \geq \sum_{i=1}^n \Vbar_i - N_1 B' +  \sum_{i=N_1}^n \EE[V_i \mid \calM_{i-1}]  && (\text{by assumptions \textbf{(A3)} \& \textbf{(A4)}}) \\ 
    & \geq \sum_{i=1}^n \Vbar_i - N_1 B' + (n-N_1) \gamma_\epsilon. && (\text{by definition of $N_1$ in~\eqref{eq:N1-def}}) \\
    & = \sum_{i=1}^n \Vbar_i + n \gamma_\epsilon - N_1 \underbrace{(B'+\gamma_\epsilon)}_{\coloneqq B_1}.  \label{eq:logWn-lower-bound}
\end{align}

Since $\tau$ is a nonnegative integer valued random variable, we have the following with $H^* = \cup_{n \geq 1} \cap_{m \geq n} G_m$ denoting the probability one event introduced earlier: 
\begin{align}
    \EE[\tau] & = \sum_{n \geq 1} \PP(\tau \geq n) = \sum_{n \geq 1} \EE[\boldsymbol{1}_{\tau \geq n}]  \stackrel{\mathrm{MCT}}{=} \EE\bigg[ \sum_{n \geq 1} \boldsymbol{1}_{\tau \geq n} \bigg] = \EE\bigg[ \sum_{n = 1}^{N_1-1} \boldsymbol{1}_{\tau \geq n} +  \sum_{n \geq N_1} \boldsymbol{1}_{\tau \geq n} \bigg] \\
    & \leq  \EE\bigg[ N_1 +  \sum_{n \geq N_1} \boldsymbol{1}_{\tau \geq n} \bigg] = \EE[N_1] + \EE \bigg[ \sum_{n \geq N_1} \boldsymbol{1}_{\tau \geq n} \bigg] = \EE[N_1] + \EE \bigg[ \sum_{n \geq N_1} \boldsymbol{1}_{\tau \geq n} \boldsymbol{1}_{H^*} \bigg]. \label{eq:tau-general-upper-bound-proof-1}
\end{align}
We now look at the event $\{\tau \geq n\} \cap H^*$ for $n \geq N_1$, with $A$ denoting $\log(1/\alpha)$:  
\begin{align}
    \{\tau \geq n\} \cap H^* & \subset \{ \log W_n< A\} 
     \subset \{ \sum_{i=1}^n \Vbar_i + n \gamma_\varepsilon - N_1 B_1 < A \} \cap H^* && (\text{by equation~\eqref{eq:logWn-lower-bound}}) \\
     & \subset \lbr n \gamma_\varepsilon - N_1 B_1 - C n^a < A\rbr. && (\text{by equation~\eqref{eq:Gn1-Gn2-def}}) \label{eq:tau-general-upper-bound-proof-2}
\end{align}
Now, for any $u>0$, define the term 
\begin{align}
    M(u) = \inf \{n \geq 1: \forall m \geq  n, \; m \gamma_\varepsilon - C m^a \geq A + u \}  \leq \frac{A+u}{\gamma^* (1-3\varepsilon)} + \lp \frac{C}{\gamma^* \varepsilon} \rp^{\tfrac{1}{1-a}}.
\end{align}
The inequality uses the fact that for $m \geq (C/\gamma^* \varepsilon)^{1/1-a}$, we have $m \gamma_{\varepsilon} - C m^a \geq m \gamma_{\varepsilon} - m \gamma^* \varepsilon = \gamma^*(1-3\varepsilon)$. 
Next, we observe that~\eqref{eq:tau-general-upper-bound-proof-1} and~\eqref{eq:tau-general-upper-bound-proof-2} combined with the definition of $M(\cdot)$ above leads to 
\begin{align}
    \EE[\tau] & \leq \EE[N_1] + \EE\big[M\big( N_1 B_1) \big) \big] \leq \EE[N_1] + \frac{B_1  \EE[N_1] + A}{\gamma^*(1-3\varepsilon)} + \lp \frac{C}{\gamma^* \varepsilon} \rp^{\tfrac{1}{1-a}}. \label{eq:tau-general-upper-bound-proof-3}
\end{align}
Since $N_1 = \max \{N_0, N^*\}$, it means that 
\begin{align}
    \EE[N_1] \leq \EE[N^*] + \EE[N_0] \leq C^* + N_0,
\end{align}
where we used the upper bound on $N^*$ obtained in~\eqref{eq:Nstar-bound}, and the  fact that $N_0$ is a deterministic constant depending on $\varepsilon$ and $\gamma^*$.  Thus, using this in~\eqref{eq:tau-general-upper-bound-proof-3}, we get 
\begin{align}
    \EE[\tau] & \leq  \frac{A}{\gamma^*(1-3\varepsilon)}\lbr 1 +  \frac{\EE[N_1]}{A} \lp \frac{B_1}{\gamma^*}  + \gamma^*(1-3\varepsilon) \rp  +  \frac{\gamma^*(1-3\varepsilon)}{A}\lp \frac{C}{\gamma^* \varepsilon} \rp^{\tfrac{1}{1-a}}  \rbr
\end{align}
Now, with any $\varepsilon, \gamma^*$ fixed, as we take $\alpha \downarrow 0$~(and hence $A \uparrow \infty$), we get
\begin{align}
\limsup_{ \alpha \downarrow 0} \frac{\EE[\tau]}{\log(1/\alpha)/\gamma^*} \leq \frac{1}{1-3\varepsilon}.     
\end{align}
Since the choice of $\varepsilon \in (0,1/3)$ was arbitrary, we can take an infimum over all such $\varepsilon$, which leads to the required statement 
\begin{align}
    \limsup_{\alpha \downarrow 0}  \frac{\EE[\tau]}{\log(1/\alpha)/\gamma^*} \leq 1. 
\end{align}
This concludes the proof. \hfill \qedsymbol

\section{Proof of~\Cref{prop:example}~(H\"older Densities)}
\label{proof:example}

Throughout the proof, we will use $\theta \in \Theta_0$ to  denote the (\holder continuous) density of any null distribution $P_\theta$, and $p$ to denote the density of any $P \in \calP_1$. We will verify each property of~\Cref{assump:general} one by one. 

\begin{lemma}
    \label{lemma:weak-compactness-Holder} The class $ \calP_0 = \{P_\theta: \theta \in \Theta_0\}$ is a weakly compact subset of $\calP(\calX)$. 
\end{lemma}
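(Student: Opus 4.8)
The plan is to show that $\Theta_0$ is compact in $\big(\calC(\calX, \R), \unorm\big)$ by verifying the hypotheses of the Arzel\'a--Ascoli theorem (\Cref{fact:arzela-ascoli}), and then to push this compactness forward to $\calP_0$ using the continuity of the density-to-measure map $\theta \mapsto P_\theta$ into $\calP(\calX)$ equipped with the topology of weak convergence.

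First I would verify the three Arzel\'a--Ascoli hypotheses for $\Theta_0$ viewed as a subset of $\calC(\calX, \R)$ with $\calX = [0,1]^d$ compact. Boundedness in sup-norm is immediate from the pointwise bound $c_0 \le \theta \le C_0$, which also ensures every element of $\Theta_0$ is a genuine bounded real-valued continuous function despite the $[0,\infty]$-valued notation used in~\Cref{def:nonparametric-example}. Equicontinuity follows because the H\"older inequality $|\theta(x) - \theta(x')| \le L\|x-x'\|_2^\beta$ holds with the \emph{same} constants $(L,\beta)$ for every $\theta \in \Theta_0$, giving a uniform modulus of continuity $\omega(t) = L t^\beta$. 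Closedness requires a short argument: if $\theta_n \to \theta$ uniformly with each $\theta_n \in \Theta_0$, then the pointwise bounds and the H\"older estimate survive the limit, and since $\mu(\calX) < \infty$, uniform convergence implies $L^1(\mu)$ convergence, so $\int \theta \, d\mu = \lim_n \int \theta_n \, d\mu = 1$; hence $\theta \in \Theta_0$. Applying \Cref{fact:arzela-ascoli} then gives that $\Theta_0$ is compact in $\big(\calC(\calX,\R), \unorm\big)$.

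Next I would consider the map $\Phi: \Theta_0 \to \calP(\calX)$, $\theta \mapsto P_\theta$, and show it is continuous when the domain carries the sup-norm and the codomain carries the weak topology. This reduces to a one-line estimate: for any $g \in \calC_b(\calX)$ and $\theta, \theta' \in \Theta_0$,
\begin{align}
    \bigl| \EE_{P_\theta}[g(X)] - \EE_{P_{\theta'}}[g(X)] \bigr| = \Bigl| \int_\calX g\,(\theta - \theta')\, d\mu \Bigr| \le \|g\|_\infty\, \mu(\calX)\, \|\theta - \theta'\|_\infty ,
\end{align}
so $\theta_n \to \theta$ uniformly forces $\EE_{P_{\theta_n}}[g] \to \EE_{P_\theta}[g]$ for all bounded continuous $g$, i.e.\ $P_{\theta_n}$ converges weakly to $P_\theta$. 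Since the continuous image of a compact set is compact, $\calP_0 = \Phi(\Theta_0)$ is weakly compact, as claimed. (The convexity of $\calP_0$ that is also needed in~\textbf{(A1)} is separate and easier: $\Theta_0$ is a convex set of densities and $\Phi$ is affine.)

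I expect the only mildly delicate point to be the closedness of $\Theta_0$ inside $\calC(\calX,\R)$ --- specifically, checking that the normalization constraint $\int \theta\, d\mu = 1$ is preserved under uniform limits --- but this is immediate once one observes that uniform convergence on the finite-measure space $([0,1]^d, \mu)$ entails $L^1$ convergence. Everything else is routine bookkeeping with Arzel\'a--Ascoli and the elementary continuity estimate above; an alternative route via Prohorov's theorem (tightness is automatic since $\calX$ is compact) would also work, but the Arzel\'a--Ascoli argument is cleaner given the structure already set up in~\Cref{subsec:example}.
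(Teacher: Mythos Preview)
Your proposal is correct and follows essentially the same route as the paper's own proof: verify boundedness, equicontinuity, and closedness of $\Theta_0$ in $\big(\calC(\calX,\R),\unorm\big)$, invoke Arzel\'a--Ascoli (\Cref{fact:arzela-ascoli}), and then push compactness forward along the continuous map $\theta \mapsto P_\theta$ using the same $\|g\|_\infty\,\mu(\calX)\,\|\theta-\theta'\|_\infty$ estimate. If anything, your treatment of the normalization constraint $\int\theta\,d\mu=1$ under uniform limits is slightly more explicit than the paper's.
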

\begin{proof}
    Recall that $\calX = [0,1]^d$, and $\Theta_0$ consists of $(L,\beta)$ \holder continuous functions, with $c_0 \leq \theta(x) \leq C_0$ for all $x \in \calX$. Viewing $\Theta_0$ as a subset of $\calC(\calX, \R)$ endowed with the sup norm $\|\cdot\|_\infty$, observe that  
    \begin{itemize}
        \item $\|\theta\|_\infty = \sup_{x \in \calX} |\theta| \leq C_0$, which implies that $\Theta_0$ is a bounded subset of $(\calC(\calX, \R), \|\cdot\|_\infty)$. 
        \item If $\theta$ denotes the limit of  a sequence $\{\theta_i: i \geq 1\} \subset \Theta_0$ in $\unorm$, then $\theta \in \Theta_0$. It easily follows that $|\theta(x) - \theta(x')| \leq |\theta(x)-\theta_i(x)| + |\theta_i(x)-\theta_i(x')| + |\theta_i(x')-\theta(x')| \leq L\|x-x'\|^\beta + |\theta(x)-\theta_i(x)| + |\theta(x')-\theta_i(x')|$. Since $\|\theta_i - \theta\|_\infty \to 0$, the limit $\theta$ is also $(L,\beta)$ \holder continuous. A similar argument shows that $\theta$ must also be a density with $c_0 \leq \theta(x) \leq C_0$. In other words, $\Theta_0$ is a closed subset of $(\calC(\calX, \R), \|\cdot\|_\infty)$. 
        \item Finally, note that $\Theta_0$ is also equicontinuous. That is, for every $\epsilon>0$, there exists a $\delta = (\epsilon/L)^{1/\beta}$, such that for all $x, x' \in \calX$ with $\|x-x'\| \leq \delta$, we have $\sup_{\theta \in \Theta_0}|\theta(x)-\theta(x')|\leq \epsilon$. 
    \end{itemize}
    Hence, the conditions for applying the Arzel\'a-Ascoli theorem~(recalled in~Fact~\ref{fact:arzela-ascoli}) are satisfied and $\Theta_0$ is a compact subset of $(\calC(\calX, \R), \|\cdot\|_\infty)$. 

    To conclude the proof, let $\varphi: \calX \to \R$ denote any element of $(\calC(\calX), \R), \|\cdot\|_\infty)$, and consider any sequence $\process{\theta_i} = \{\theta_i \in \Theta_0: i \geq 1\}$ converging to $\theta \in \Theta_0$ in $\unorm$. Then, observe that 
    \begin{align}
        \lv \int_{\calX} \varphi dP_{\theta} - \int_{\calX} \varphi dP_{\theta_i} \rv = \lv \int_{\calX} \varphi(x)  \theta_i(x)dx - \int_{\calX} \varphi(x) \theta(x) dx \rv \leq \|\varphi\|_\infty \|\theta - \theta_i \|_\infty \int_{\calX} dx \;\;\stackrel{i \to \infty}{\longrightarrow}\;\; 0. 
    \end{align}
    This implies that $\theta_i \to \theta$ implies $P_{\theta_i} \Rightarrow P_{\theta}$; or, that the map $\theta \mapsto P_\theta$ is continuous (w.r.t. the weak topology on $\calP(\calX)$). 
    Therefore, $\calP_0 = \{P_\theta: \theta \in \Theta_0\}$ is the continuous image of a compact set, hence compact. 
\end{proof}

Now, suppose that the true distribution is $P \in \calP_1$. Then, by the construction of $\calP_1$, there exists a $\theta \in \Theta_0$, such that $\ell_\theta = dP/dP_{\theta} \in \lp \calC(\calX, \R),\unorm \rp$.
Since $\theta$ is continuous and bounded away from $0$, it follows that $P$ admits a continuous and bounded density $p(x) = \ell_\theta(x) \theta(x)$. By the weak compactness of $\calP_0$, let $P_{\theta^*}$ denote the reverse information projection of $P$ on $\calP_0$; that is, $P_{\theta^*} = \argmin_{P_{\theta'} \in \calP_0} \dkl(P \parallel P_{\theta'})$, and let $\theta^* \in \Theta_0$ denote its density with respect to $\mu$. Then, we can observe that 
\begin{align}
    \ell^*(x) \equiv \ell_{\theta^*}(x) \coloneqq \frac{dP}{dP_{\theta^*}}(x)= \frac{p(x)}{\theta^*(x)}, \quad \implies \quad \|\ell^*\|_\infty  \leq \frac{C_0}{c_0} \|\ell_\theta\|_\infty  < \infty, 
\end{align}
and $\ell^* > R_0 \frac{c_0}{C_0} > 0$ by the assumption on $\calP_1$ in~\Cref{def:nonparametric-example}. 
This implies that the optimal log-likelihood $f^* = \log \ell^*$ is also an element of $\calC(\calX, \R)$ with its $\sup$ norm bounded by $\log C_0 \|\ell_\theta\|/c_0$. 

Now, we know that the RKHS of a \emph{universal kernel} $\kappa$ is dense in $\lp \calC(\calX, \R), \unorm \rp$, which means that for an arbitrary $\epsilon>0$, there exists an $h_\epsilon \in \calH_{\kappa}$ such that $\|h_\epsilon - f^*\|_\infty \leq \epsilon$. Let $R_\epsilon $ denote the RKHS norm of $h_\epsilon$, and let $j(\epsilon) = \max \{0, \lceil \log_2 R_\epsilon \rceil \}$ denote the index of the smallest $\calF_j$ that contains $h_\epsilon$. Additionally, we have 
\begin{align}
    &\lv \EE_P[h_\epsilon(X)] - \EE_P[f^*(X)] \rv \leq \|h_\epsilon - f^*\|_\infty \leq \epsilon, \\
    \text{and} \quad & \psi_0(h_\epsilon) = \sup_{\theta \in \Theta_0} \log \EE_{\theta}[e^{h_\epsilon(X)}] \leq \sup_{\theta \in \Theta_0} \log \lp \EE_{\theta}[e^{f^*(X)}] e^{\epsilon} \rp \leq \psi_0(f^*) + \epsilon. 
\end{align}
Together, these two statements imply that $\EE_P[h_\epsilon(X)] - \psi_0(h_\epsilon) \geq \EE_P[f^*(X)] - \psi_0(f^*)  - 2\epsilon = \gamma^*(P, \calP_0) - 2\epsilon$.  In other words, for any accuracy level $\epsilon>0$, we can select a $j(\epsilon) < \infty$, such that we have 
\begin{align}
    \sup_{h \in \calF_{j(\epsilon)}} \EE_P[h(X)] - \psi_0(h) \geq \gamma^*(P, \calP_0) - 2 \epsilon. \label{eq:approx-DV-example} 
\end{align}
This establishes an approximate version of~\textbf{(A2)} in~\Cref{assump:general}, that as we show later, is sufficient. 

Because, the kernel $\kappa$ is bounded, it also means that every element of $\calF_{j(\epsilon)}$  satisfies 
\begin{align}
    \|h\|_\infty = \sup_{x \in \calX} |h(x)| =  \sup_{x \in \calX} |\langle h, \kappa(x, \cdot) \rangle_{\kappa}| \leq \sup_{x \in \calX} \sqrt{\kappa(x, x)} \|h\|_{\kappa} \leq 2^{j(\epsilon)}. 
\end{align}
As a result, the uniform moment requirement~\textbf{(A3)}, and the uniform MGF condition~\textbf{(A4)} are satisfied trivially. Finally, the uniform convergence condition~\textbf{(A5)} also holds with $r_n = \calO(2^{j(\epsilon)}\sqrt{\log n/n})$ using standard covering number bounds for RKHSs. 

Finally, fix an $\alpha$, and observe that 
\begin{align}
    W_n = \sum_{j \geq 0} c_j W_n^j \geq c_{j(\epsilon)} W_n^{j(\epsilon)}, \quad \implies \quad \tau_\alpha \leq \inf \lbr n \geq 1: W_n^{j(\epsilon)} \geq \frac{1}{c_{j(\epsilon)} \alpha} \rbr \eqcolon \tau_{\alpha, \epsilon}. 
\end{align}
Observe that $\tau_{\alpha, \epsilon}$ is based only on an $e$-processes associated with the function class $\calF_{j(\epsilon)}$, which, as we have shown above, satisfies the conditions \textbf{(A3)}-\textbf{(A5)} of~\Cref{assump:general}. Thus, an application of~\Cref{theorem:general-DV-e-process} implies that 
\begin{align}
     \EE_P\lb \tau_{\alpha, \epsilon} \rb  = \frac{\log(1/\alpha c_{j(\epsilon)})}{\gamma^*(P, \calP_0) - 2 \epsilon} \lp 1 + o(1) \rp, 
\end{align}
where $o(1)$ indicates a term that goes to zero with all other quantities~($P$, $\epsilon$, $j(\epsilon)$ etc.) fixed. Thus, on taking $\alpha \downarrow 0$, we get 
\begin{align}
 \limsup_{\alpha \downarrow 0} \frac{\EE_P\lb \tau_\alpha \rb}{\log(1/\alpha)/\lp \gamma^*(P, \calP_0) - 2 \epsilon\rp} \leq    \limsup_{\alpha \downarrow 0} \frac{\EE_P\lb \tau_{\alpha, \epsilon} \rb}{\log(1/\alpha)/\lp \gamma^*(P, \calP_0) - 2 \epsilon\rp}   \leq 1. 
\end{align}
Since $\epsilon>0$ was arbitrary, we may take it to $0$ without violating the bound. This concludes the proof.

\section{Auxiliary Lemmas}    
\begin{lemma}
    \label{lemma:uniform-erm}
    Under the conditions of~\Cref{assump:general}, the event $G_{n,1}$ introduced in~\eqref{eq:Gn1-Gn2-def} has probability at least $1-1/n^{2+\delta/3}$ for each $n \geq 2$. 
\end{lemma}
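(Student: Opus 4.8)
The plan is to show that the event $G_{n,1}$ contains the ``good'' uniform-convergence event supplied by assumption \textbf{(A5)} applied to the first $n-1$ observations, so that $G_{n,1}$ inherits its probability. First I would perform a reduction for the conditional mean: since $f_n$ is $\calM_{n-1}$-measurable and $X_n$ is independent of $\calM_{n-1}$ (the enlarged filtration only adds design randomness independent of the data stream), conditioning on $\calM_{n-1}$ freezes $f_n$ and integrates $X_n$ against $P$, giving
\begin{align}
    \EE[V_n \mid \calM_{n-1}] \;=\; \EE_P[f_n(X)] - \psi_0(f_n) \;=:\; g(f_n),
\end{align}
where $g(f) := \EE_P[f(X)] - \psi_0(f)$ is the population DV-objective, finite (and hence $\gamma^* = \sup_{f\in\calF} g(f) < \infty$) by \textbf{(A3)}--\textbf{(A4)}. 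For full rigor one also notes that $(x,f)\mapsto f(x)$ is jointly measurable on $\calX \times \calF$ with the sup-norm Borel structure, so $g(f_n)$ is genuinely $\calM_{n-1}$-measurable.

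Next I would recall from the main argument that for the fixed $\varepsilon \in (0,1/3)$ there is an $f^* \in \calF$ with $g(f^*) \geq \gamma^*(1-\varepsilon)$ (which exists since $\sup_{f \in \calF} g(f) = \gamma^* < \infty$), and introduce the empirical objective $\widehat g_{m}(f) := \tfrac1m\sum_{i=1}^m f(X_i) - \psi_0(f)$, for which $\widehat g_m(f) - g(f) = \tfrac1m\sum_{i=1}^m f(X_i) - \EE_P[f(X)]$. Then the event $\calE_{n-1} := \{\sup_{f\in\calF}|\widehat g_{n-1}(f) - g(f)| \leq r_{n-1}\}$ is exactly the event appearing in \textbf{(A5)} at sample size $n-1$, hence has probability at least $1 - (n-1)^{-(2+\delta/3)}$. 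On $\calE_{n-1}$, the ERM optimality $\widehat g_{n-1}(f_n) \geq \widehat g_{n-1}(f^*)$ combined with the two-sided uniform bound yields the sandwich
\begin{align}
    g(f_n) \;\geq\; \widehat g_{n-1}(f_n) - r_{n-1} \;\geq\; \widehat g_{n-1}(f^*) - r_{n-1} \;\geq\; g(f^*) - 2 r_{n-1} \;\geq\; \gamma^*(1-\varepsilon) - 2 r_{n-1},
\end{align}
so that, by the reduction above, $\calE_{n-1} \subset G_{n,1}$ and therefore $\PP(G_{n,1}) \geq 1 - 1/n^{2+\delta/3}$.

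I do not expect a genuinely hard step here: the content is just that ERM transfers the near-optimality of $f^*$ to the predictable iterate $f_n$ once the empirical and population DV-objectives are uniformly close, which is exactly what \textbf{(A5)} buys. The two points requiring a little care are (i) the measurability/independence bookkeeping that identifies $\EE[V_n\mid\calM_{n-1}]$ with $g(f_n)$, and (ii) the cosmetic index mismatch between the $r_n$ appearing in the definition of $G_{n,1}$ and the $r_{n-1}$ that the $(n-1)$-sample ERM naturally produces; the latter is absorbed by relabeling the learnability sequence and affects neither $r_n \to 0$ nor the summability $\sum_n (n-1)^{-(2+\delta/3)} < \infty$ used in~\eqref{eq:event-Hn-def}--\eqref{eq:Nstar-bound}. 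If one instead works with the $s_n$-approximate ERM of Remark~\ref{remark:definition-of-erm}, the only change is an extra additive $s_n$ in the sandwich, again harmless since $s_n \to 0$.
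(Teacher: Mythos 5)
Your proof takes essentially the same route as the paper's: both apply assumption \textbf{(A5)} at sample size $n-1$, use the ERM definition of $f_n$ to sandwich the population DV-objective $g(f_n)$ against $g(f^*) \geq \gamma^*(1-\varepsilon)$, and identify $\EE[V_n \mid \calM_{n-1}]$ with $g(f_n)$ via the $\calM_{n-1}$-measurability of $f_n$ and the independence $X_n \perp \calM_{n-1}$. Your observation that applying \textbf{(A5)} to $n-1$ samples naturally yields $r_{n-1}$ and $1-(n-1)^{-(2+\delta/3)}$ rather than the $r_n$ and $1-n^{-(2+\delta/3)}$ appearing in the definition of $G_{n,1}$ and the lemma statement is a fair catch — the paper elides this off-by-one, which, as you note, is harmless since a relabeling of the learnability sequence preserves both $r_n \to 0$ and the summability used in \eqref{eq:event-Hn-def}--\eqref{eq:Nstar-bound}.
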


\begin{proof}
    This result follows from an application of part \textbf{(A5)} of~\Cref{assump:general}, and the definition of $f_n$. In particular, recall that $f_n$ is defined as the ERM solution  for all $n \geq 2$: 
    \begin{align}
        f_n \in \argmax_{f \in \calF} \frac{1}{n-1} \sum_{i=1}^{n-1} f(X_i) - \psi_0(f). 
    \end{align}
    By~\textbf{(A5)}, we know that for all $f \in \calF$ and $n \geq 2$, we have the following with probability at least $1-1/n^{2 + \delta/3}$: 
    \begin{align}
        \left\lvert \EE_P[f(X)] - \frac{1}{n-1} \sum_{i=1}^n f(X_i) \right\rvert \leq r_n \; \implies \; \left\lvert \lp \EE_P[f(X)] - \psi_0(f)\rp - \lp \frac{1}{n-1} \sum_{i=1}^n f(X_i) - \psi_0(f) \rp \right\rvert \leq r_n. \label{eq:proof-erm-1}
    \end{align}
    As a result, we have the following, with $\Phat_{n-1}$ denoting the empirical distribution of $X^{n-1}$: 
    \begin{align}
        \EE_{\Phat_{n-1}}[f_n(X)] - \psi_0(f_n) \;\geq\; \EE_{\Phat_{n-1}}[f^*(X)] - \psi_0(f^*) \; \geq \; \EE_P[f^*(X)] - \psi_0(f^*) - r_n = \gamma^*(1 - \varepsilon)  - r_n, 
    \end{align}
    where the first inequality is due to $f_n$ being the ERM function, and the second inequality is due to~\eqref{eq:proof-erm-1}. Another application of~\eqref{eq:proof-erm-1} then gives us 
    \begin{align}
        \gamma^*(1-\varepsilon) - r_n \; \leq \; \EE_{\Phat_{n-1}}[f_n(X)] - \psi_0(f_n) \;\leq\; \EE_P[f_n(X)] - \psi_0(f_n) + r_n, 
    \end{align}
    or equivalently, $\EE_P[f_n(X)] \geq \gamma^*(1-\varepsilon) - 2 r_n$ under the $1-1/n^{2 + \delta/3}$ event in~\textbf{(A5)}. This concludes the proof, noting that $\EE_P[V_n \mid \calM_{n-1}] = \EE_P[f_n(X_n) - \psi_0(f_n) \mid \calM_{n-1}] = \EE_P[f_n(X)] - \psi_0(f_n)$, as $X_n \perp \calM_{n-1}$. 
\end{proof}

\begin{lemma}
    \label{lemma:martingale-concentration}
    Under the conditions of~\Cref{assump:general}, the event $G_{n,2}$ defined in~\eqref{eq:Gn1-Gn2-def} for $n \geq 2$ occurs with probability at least $1- n^{-2 - \delta/3}$. 
    % Let $\{Y_n: n \geq 1\}$ denote a martingale difference sequence with respect to a filtration $\{\calM_n: n \geq 0\}$, such that for some $\delta > 0$ and $B \in (0, \infty)$,  we have 
    % \begin{align}
    %     \sup_{n \geq 1} \EE\lb \left\vert Y_n \right\rvert^{4+\delta} \rb \leq K. 
    % \end{align}
    % Then, there exist a constant  $C > 0$~(depending on $K$ and $\delta$), such that for all $n \geq 1$, we have 
    % \begin{align}
    %     \mathbb{P}\lp S_n < - C n^a \rp \leq \frac {1}{n^{2 + \tfrac{\delta}{3}}}, \qtext{where} S_n = \sum_{i=1}^n Y_i, \quad a = 1 - \frac{\delta}{6(4 + \delta)} \in (0, 1). 
    % \end{align}
\end{lemma}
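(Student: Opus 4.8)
The plan is to bound $\mathbb{P}(G_{n,2}^c)$ by controlling the $(4+\delta)$-th moment of the martingale $S_k \coloneqq \sum_{i=1}^k \Vbar_i$ via \Cref{fact:pinelis-1994}, and then applying Markov's inequality with exponent $p = 4+\delta$, which has been chosen precisely to match the definition of $a$. First I would record that $\{\Vbar_i\}$ is a martingale difference sequence for the filtration $\{\calM_i\}$: indeed $V_i = f_i(X_i) - \psi_0(f_i)$ with $f_i$ being $\calM_{i-1}$-measurable, and $\Vbar_i = V_i - \mathbb{E}[V_i \mid \calM_{i-1}]$. Because $X_i$ is independent of $\calM_{i-1}$ and $f_i \in \calF$, assumption \textbf{(A3)} gives $\mathbb{E}[|f_i(X_i)|^{4+\delta} \mid \calM_{i-1}] = \mathbb{E}_P[|f(X)|^{4+\delta}]\big|_{f = f_i} \le B_P$ almost surely, and \textbf{(A4)} gives $|\psi_0(f_i)| \le B_0$; combining these with $(a+b)^{p} \le 2^{p-1}(a^p + b^p)$ and conditional Jensen yields a deterministic bound $\mathbb{E}[|\Vbar_i|^{4+\delta} \mid \calM_{i-1}] \le \widetilde B$, and hence also $\mathbb{E}[\Vbar_i^2 \mid \calM_{i-1}] \le \widetilde B^{2/(4+\delta)}$, for a constant $\widetilde B$ depending only on $B_P$, $B_0$, and $\delta$.

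Next I would apply \Cref{fact:pinelis-1994} with $Y_i = \Vbar_i$, $p = 4 + \delta$, $S_n^* = \max_{k \le n}|S_k|$ and $Y_n^* = \max_{i \le n}|\Vbar_i|$. The first term is handled by $\mathbb{E}[(Y_n^*)^p] \le \sum_{i=1}^n \mathbb{E}[|\Vbar_i|^p] \le n\widetilde B$, and the predictable quadratic variation term is bounded deterministically, $\sum_{i=1}^n \mathbb{E}[\Vbar_i^2 \mid \calM_{i-1}] \le n\widetilde B^{2/p}$, so $\big(\sum_{i=1}^n \mathbb{E}[\Vbar_i^2\mid\calM_{i-1}]\big)^{p/2} \le n^{p/2}\widetilde B$ almost surely. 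Since $1/p \le 1/2$ and $n\ge 1$, both contributions to the Pinelis bound are at most a constant multiple of $n^{1/2}$, so $\mathbb{E}[|S_n|^p] \le \mathbb{E}[|S_n^*|^p] \le \widetilde C^p n^{p/2}$ for a constant $\widetilde C$ depending only on $B_P$, $B_0$, $\delta$ and the Pinelis constants $C_{1,p}, C_{2,p}$ (which depend only on $p$).

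Finally, by Markov's inequality applied to $(S_n^*)^p$,
\[
\mathbb{P}(G_{n,2}^c) = \mathbb{P}\bigg(\sum_{i=1}^n \Vbar_i < -C n^a\bigg) \le \mathbb{P}(S_n^* \ge C n^a) \le \frac{\mathbb{E}[|S_n^*|^p]}{(C n^a)^p} \le n^{p/2 - a p},
\]
where the last step holds once the constant $C$ in the definition of $G_{n,2}$ is taken at least $\widetilde C$ (which depends only on $\delta$, $B_P$, $B_0$). The exponent then equals $p\big(\tfrac12 - a\big) = (4+\delta)\big(\tfrac{\delta}{6(4+\delta)} - \tfrac12\big) = \tfrac{\delta}{6} - \tfrac{4+\delta}{2} = -2 - \tfrac{\delta}{3}$, which is exactly the claimed bound $\mathbb{P}(G_{n,2}) \ge 1 - n^{-2-\delta/3}$. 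The only point that really needs care is this last algebraic identity — checking that pairing $p = 4+\delta$ with $a = 1 - \delta/(6(4+\delta))$ makes $n^{p/2-ap}$ collapse to $n^{-2-\delta/3}$ — together with the bookkeeping that every constant ($\widetilde B$, the Pinelis constants, and the constant $C$ in the event) depends only on $\delta$, $B_P$, $B_0$ and not on $n$; there is no genuine analytic obstacle beyond a routine application of \Cref{fact:pinelis-1994}.
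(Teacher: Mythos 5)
Your proof is correct and follows essentially the same route as the paper: bound $\mathbb{E}[|\Vbar_i|^{4+\delta}\mid\calM_{i-1}]$ deterministically via \textbf{(A3)}/\textbf{(A4)} and conditional Jensen, feed this into \Cref{fact:pinelis-1994} to get $\mathbb{E}[(S_n^*)^p] \lesssim n^{p/2}$, then apply Markov at level $Cn^a$ with $p=4+\delta$ and verify the exponent collapses to $-2-\delta/3$. The final algebraic check and the observation that the constant in the definition of $G_{n,2}$ must absorb $\widetilde C$ both match the paper's treatment (which adopts the convention that $C$ denotes a universal constant free to change from line to line).
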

\begin{proof}
    We will begin by applying~\Cref{fact:pinelis-1994} with $Y_i = \bar{V}_i = V_i - \EE[V_i \mid \calM_{i-1}]$, and for $p = 4 + \delta$, we have 
    \begin{align}
        |\Vbar_i|^p & \leq 2^{p-1} \lp  |V_i|^p + |\EE[V_i \mid \calM_{i-1}]|^p \rp  \leq 2^{p-1} \lp  |V_i|^p + \EE[|V_i |^p \mid \calM_{i-1}]\rp, 
    \end{align}
    where we use $(a+b)^p = 2^{p} \lp \tfrac{a+b}{2} \rp^p \leq 2^{p}\lp \tfrac{a^p}{2} + \tfrac{b^p}{2} \rp = 2^{p-1}(a^p + b^p)$ for constants $a, b >0$ and $p>1$ in the first inequality, while the second inequality is by conditional Jensen's. 
    This means that 
    \begin{align}
        \EE\lb \lv \Vbar_i \rv^p \rb & \leq 2^p \EE\lb |V_i|^p  \mid \calM_{i-1} \rb = 2^p \EE\lb |f_i(X_i) - \psi_0(f_i)|^p \mid \calM_{i-1} \rb \leq 2^{2p-1} \EE\lb |f_i(X_i)|^p + |\psi_0(f_i)|^p  \mid \calM_{i-1}\rb. 
    \end{align}
    By condition~\textbf{(A4)}, we know that $\sup_{f\in \calF} |\psi_0(f)| \leq B_0$, and so $\EE[|\psi_0(f_i)|^p \mid \calM_{i-1}]$ can be bounded by $ B_0^p$. For the remaining term, note that 
    \begin{align}
        \EE[|f_i(X_i)|^p \mid \calM_{i-1}] & \leq \sup_{f \in \calF}\EE \lb |f(X_i)|^p \mid \calM_{i-1} \rb   \leq B_P, 
    \end{align}
    by \textbf{(A3)} condition of~\Cref{assump:general}. Together, these inequalities imply the bound 
    \begin{align}
        \EE[|\Vbar_i|^p \mid \calM_{i-1}] \leq 2^{2p-1}\lp B_P + B_0^p \rp \eqcolon K   < \infty, \label{eq:concentration-moment-bound-1}
    \end{align}
    almost surely. 
    Now, by~\Cref{fact:pinelis-1994}, we have the following for constants $C_{1,p}, C_{2,p}<\infty$: 
    \begin{align}
        \EE[| \max_{1 \leq k \leq n} S_k |^p]^{1/p} & \leq C_{1,p} \lp \EE\lb \max_{1 \leq i \leq n} |\Vbar_i|^p \rb \rp^{1/p} + C_{2,p} \lp \EE\lb \lp\sum_{i=1}^n \EE\lb \Vbar_i^2 \mid \calM_{i-1} \rb  \rp^{p/2} \rb \rp^{1/p} \\
        & \leq C_{1,p} \lp \EE\lb \sum_{i=1}^n |\Vbar_i|^p \rb \rp^{1/p} + C_{2,p} \lp \EE\lb \lp\sum_{i=1}^n \EE\lb \Vbar_i^2 \mid \calM_{i-1} \rb  \rp^{p/2} \rb \rp^{1/p}. 
    \end{align}
    Hence, there exists some $C< \infty$, such that 
    \begin{align}
        \EE\bigg[| \max_{1 \leq k \leq n} S_k |^p \bigg]
        & \leq C \lp \EE\lb \sum_{i=1}^n \EE \lb |\Vbar_i|^p  \mid \calM_{i-1} \rb \rb  + \EE\lb \lp\sum_{i=1}^n \EE\lb \Vbar_i^2 \mid \calM_{i-1} \rb  \rp^{p/2} \rb  \rp. 
    \end{align}
    The first term in the RHS above can be upper bounded by $C \times n K$ due to~\eqref{eq:concentration-moment-bound-1}.  For the second term, observe that 
    \begin{align}
        \EE[|\Vbar_i|^2 \mid \calM_{i-1}] & = \EE \lb\lp |\Vbar_i|^p\rp^{2/p} \mid \calM_{i-1} \rb \stackrel{\text{Jensen's}}{\leq}  \lp\EE \lb |\Vbar_i|^p \mid \calM_{i-1} \rb \rp^{2/p} \leq K^{2/p}, 
    \end{align}
    where the last inequality again uses~\eqref{eq:concentration-moment-bound-1}. Thus, we obtain the following 
    \begin{align}
        \EE\bigg[| \max_{1 \leq k \leq n} S_k |^p \bigg] & \leq C K n + C K n^{p/2} \leq
        C'_p n^{p/2}, 
    \end{align}
    for some constant $C'_p$.

    Thus, for any $t_n >0$, an application of Markov's inequality gives us 
    \begin{align}
        \PP\lp \max_{1 \leq k \leq n} |S_k| \geq (C_p')^{1/p}t_n \rp \leq  \EE\lb \max_{1 \leq k \leq n} |S_k|^p\rb t_n^{-p} (C_p')^{-1}\leq  n^{p/2} t_n^{-p}. 
    \end{align}
    Selecting $t_n = n^a$ with $a = \tfrac {2 + \delta/3}{p} + \tfrac 12$, we get $\max_{1 \leq k \leq n} |S_k| \leq Cn^a$~(where we use $C = (C_p')^{1/p}$) with probability at least $1 -  n^{-2 - \tfrac{\delta}{3}}$. This completes the proof
\end{proof}

    \begin{lemma}
        \label{lemma:fixed-point-1} For some positive constants $K$ and $L$, let $y^*$ denote the largest solution of the equation 
        \begin{align}
            y \;=\; K + L \log y.
        \end{align}
        Then, we have the following implication: 
        \begin{align}
             \frac{K}{L} > \max \lbr 1 - \log L, \;  \frac{\log K + 3}{2}  \rbr \quad \implies \quad   y^* \leq K + L \log K  + 2L. 
        \end{align}
        In other words, as $K \uparrow \infty$ with $L$ fixed, we have $y^* = \lp K + L \log K \rp (1 + o(1))$, with $o(1)$ indicating a term that vanishes to $0$ as $K \uparrow \infty$. 
    \end{lemma}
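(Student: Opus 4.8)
The plan is to study the single-variable function $g(y) := K + L\log y - y$ on $(0,\infty)$ and to extract both the location of $y^*$ and the claimed bound from its shape. Since $g'(y) = L/y - 1$ and $g''(y) = -L/y^2 < 0$, the map $g$ is strictly concave, strictly increasing on $(0,L)$, strictly decreasing on $(L,\infty)$, and $g(y)\to -\infty$ at both ends of $(0,\infty)$. The hypothesis $K/L > 1 - \log L$ is exactly the assertion that $g(L) = L\,\big(K/L - 1 + \log L\big) > 0$, so $g$ attains a positive maximum at $y = L$; combined with the two-sided divergence, $g$ then has precisely two zeros, one in $(0,L)$ and one in $(L,\infty)$, and the larger of these is the largest solution of $y = K + L\log y$, i.e. it is $y^*$, with $y^*\in(L,\infty)$. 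In particular $g$ is strictly decreasing on $[L,\infty)$, positive on $[L,y^*)$ and negative on $(y^*,\infty)$.

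Set $\bar y := K + L\log K + 2L$. It then suffices to establish (i) $\bar y \ge L$ and (ii) $g(\bar y) \le 0$: given both, $\bar y$ lies on the strictly decreasing branch $[L,\infty)$ of $g$ and $g(\bar y) \le 0 = g(y^*)$ forces $\bar y \ge y^*$, which is the desired inequality. Claim (ii) is handled by the substitution $g(\bar y) = K + L\log\bar y - \bar y = L\big(\log\bar y - \log K - 2\big) = L\log\!\big(\bar y/(Ke^2)\big)$, so that (ii) is equivalent to $\bar y \le Ke^2$, i.e. to $L(\log K + 2) \le K(e^2 - 1)$; this I read off from the second hypothesis $K/L > (\log K + 3)/2$, since the inequality is trivial when $\log K + 2 \le 0$ and otherwise reduces to the elementary chain $\tfrac{\log K + 2}{e^2 - 1} < \tfrac{\log K + 3}{2} < \tfrac{K}{L}$ (using $e^2 - 1 > 2$ and treating the cases $\log K \ge 0$ and $-2 < \log K < 0$ separately). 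Claim (i) — equivalently $K + L(\log K + 1) \ge 0$ — is a similarly short consequence of the hypotheses (immediate from the second once $K \ge e^{-5/3}$, and otherwise obtained by invoking the first as well).

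For the asymptotic refinement I will pair the bound $y^* \le \bar y = K + L\log K + 2L$ with the elementary lower bound $y^* \ge K$, valid for all large $K$: since $y^* = K + L\log y^*$ with $L$ held fixed, $y^* \to \infty$ as $K\to\infty$, hence $y^* \ge 1$ eventually and therefore $\log y^* \ge 0$, giving $y^* \ge K$. Dividing $K \le y^* \le K + L\log K + 2L$ by $K + L\log K$ and letting $K\uparrow\infty$ with $L$ fixed sends both ratios to $1$, so $y^* = \big(K + L\log K\big)(1 + o(1))$.

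The part I expect to require the most care is the placement step (i): one must ensure that $\bar y$ genuinely lands on the decreasing branch $[L,\infty)$ of $g$ before monotonicity can be invoked. This is precisely where the two hypotheses on $K/L$ enter in an essential way — the concavity analysis of $g$ and the substitution evaluating $g(\bar y)$ being otherwise routine.
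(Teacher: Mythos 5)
Your strategy is essentially the paper's, recast in terms of $g = -f$: concavity, two zeros, evaluate at the candidate $\bar y$ and use monotonicity on the decreasing branch. Your treatment of step (ii) is in fact cleaner than the paper's: the exact identity $g(\bar y) = L\log\big(\bar y/(Ke^2)\big)$ reduces (ii) to $\bar y \le Ke^2$ and then to the second hypothesis via an elementary chain, whereas the paper uses the one-sided bound $\log(1+x)\le x$. That part is correct and arguably an improvement.

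Step (i), however, does not hold as claimed, and this is a genuine gap. You assert that $\bar y \ge L$ (equivalently $K + L(\log K + 1)\ge 0$) follows from the stated hypotheses, ``immediate from the second once $K\ge e^{-5/3}$, and otherwise obtained by invoking the first as well.'' The $K\ge e^{-5/3}$ half is fine, but the remaining case fails: take $K = e^{-10}$ and $L = e$. Then $1 - \log L = 0$ and $(\log K + 3)/2 = -7/2$, so the hypothesis reads $K/L = e^{-11} > 0$, which holds. Yet $\bar y = K + L\log K + 2L = e^{-10} - 10e + 2e = e^{-10} - 8e < 0$, so $\bar y$ is not in the domain of $g$ at all, and in particular $\bar y < L$. (Indeed $y^*\approx 3 > 0 > \bar y$ here, so the lemma's stated conclusion itself fails.) Structurally, the problem is that once $K < e^{-3}$ the second hypothesis is vacuous (its right side is negative), and the first hypothesis does not bound $L$ from above, so $K/L \ge -1-\log K$ cannot be extracted; your argument needs $L \le Ke^2$ in that regime, which is exactly what the counterexample violates. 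Note also that your step (ii) implicitly presupposes $\bar y > 0$ so that $\log\bar y$ makes sense, so (ii) is itself contingent on (i).

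In fairness, the paper's own proof has the identical gap: it plugs $y = K + L\log K + C$ into $f$ without verifying $y > L$ or even $y>0$ before taking logarithms, so the lemma as stated is literally false in the small-$K$ regime. It is applied only as $K\uparrow\infty$ with $L$ fixed, where none of this matters, but a correct statement would add a hypothesis such as $K\ge 1$ (or $\log K\ge -1$), which makes (i) immediate since then $K + L(\log K + 1)\ge K > 0$; with that added, both your argument and the paper's go through unchanged.
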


    \begin{proof}
        The function $f(y) = y - K  - L \log y$ is convex on the domain $(0, \infty)$, and it satisfies $\lim_{y \downarrow 0} f(y) = \lim_{y \uparrow \infty} f(y) = + \infty$. Hence it achieves its minimum value at $y$ such that $f'(y) = 1 - L/y = 0$ or $y_{\min} = L$. The function value at $y_{\min}$ is $L - K - L \log L < 0$ by the assumption on $K/L$. Thus the equation $f(y)=0$ has two solutions, say $y_0 < y_1$, and our goal is to get an upper bound on $y^* = y_1$. To do this, note that it is sufficient to find a $y > L$ such that $f(y) > 0$. Let us try a value $y = K + L \log K + C$ for some constant $C>0$ that we will specify later, and observe that $f(y) = L \log K + C - L \log (K + L \log K + C) = C - L \log \lp 1 + \frac{L \log K + C}{K} \rp \geq C - L\lp \frac{L \log K + C}{K} \rp$, where we used the fact that $\log(1+x) \leq x$ for $x \geq 0$. Now, suppose $2L \leq C \leq 3L$, which gives us the bound $f(y) \geq C - L^2\lp\frac{\log K + 3}{K} \rp$ which is greater than $0$ by the assumption on $(L, K)$. Hence, we can conclude that $y^* \leq L \log K + K + 2L$.

    \end{proof}

\end{appendix}

\end{document}